\newtheorem{theorem}{Theorem}[section]
\newtheorem{lemma}{Lemma}[section]
\newtheorem{prop}{Proposition}[section]
\newtheorem*{propA}{Proposition A}
\newtheorem*{propB}{Proposition B}
\newtheorem*{propA2}{Proposition A'}
\newtheorem*{propB2}{Proposition B'}
\newtheorem*{prop1}{Property 1}
\newtheorem*{prop2}{Property 2}
\newtheorem*{prop3}{Property 3}
\newtheorem*{prop12}{Property 1'}
\newtheorem*{prop22}{Property 2'}
\newtheorem*{prop32}{Property 3'}
\theoremstyle{definition}
\newtheorem{definition}{Definition}[section]
\theoremstyle{remark}
\newtheorem*{remark}{Remark}
\renewcommand{\Re}{\operatorname{Re}}
\renewcommand{\Im}{\operatorname{Im}}
\date{} 
\title{A polynomial automorphism with a wandering Fatou component}
\author{David Hahn, Han Peters}
\begin{document}

\renewcommand{\baselinestretch}{1.07}

\maketitle

\begin{abstract}
We construct polynomial automorphisms with wandering Fatou components. The four-dimensional automorphisms $H$ lie in a one-parameter family, depending on the parameter $\delta \in \mathbb C \setminus \{0\}$, and as $\delta \rightarrow 0$ the automorphisms degenerate to the two-dimensional polynomial map $P$ constructed in \cite{ABDPR}. Our main result states that if $P$ has a wandering domain, then $H$ does too for $\delta$ sufficiently small.
\end{abstract}

\section{Introduction}

Sullivan's No Wandering Domain Theorem \cite{Sullivan} asserts that polynomials and rational functions in $\mathbb{C}$ do not have wandering Fatou components.
Quite recently it was shown in \cite{ABDPR} that in higher dimensions there do exist polynomial maps with wandering Fatou components. The two-dimensional maps constructed in \cite{ABDPR} have the simple form
$$
P: (z,w) \mapsto \left(f(z) + \frac{\pi^2}{4} \cdot w, g(w)\right),
$$
where $f,g$ are polynomials in one variable. Important in the construction is that both $f$ and $g$ have a parabolic fixed point at the origin.

Here we will consider a problem suggested to us by Romain Dujardin: \emph{Use the techniques introduced in \cite{ABDPR} to construct polynomial automorphisms with wandering Fatou components.}

An immediate observation is that this is impossible in dimension $2$. Polynomial automorphisms have constant Jacobian determinant, and the construction with the two parabolic fixed points requires this Jacobian determinant to have norm $1$. But a volume preserving map cannot have wandering Fatou components, at least not with bounded orbits.

Thus in order to search for polynomial automorphisms with wandering Fatou components, using similar techniques, we are forced to consider higher dimensional maps. The maps that we will consider are small four-dimensional perturbations of the polynomial $P$. An initial idea is to consider invertible maps of the form
$$
H(z, x) = (P(z) - \delta \cdot x, z),
$$
for $z,x \in \mathbb C^2$ and $\delta \in \mathbb C \setminus \{0\}$. These maps are four-dimensional analogues of H\'enon maps, degenerating to the polynomial map $P$ as $\delta \rightarrow 0$. However, for these maps the fixed point $(0,0)$ is hyperbolic, i.e. none of the eigenvalues of $DH(0,0)$ have norm $1$, making it impossible to use techniques from \cite{ABDPR}.

Instead, we will consider a similar class of invertible maps $H:\mathbb{C}^4\rightarrow\mathbb{C}^4$ of the form
\begin{equation}\label{eq:H}
H: \left(\left( \begin{array}{c} z \\ x \end{array}\right), \left( \begin{array}{c} w \\ y \end{array}\right)\right) \mapsto \left(F\left( \begin{array}{c} z \\ x \end{array}\right) + \left( \begin{array}{c} \frac{\pi^2}{4}w \\ 0 \end{array}\right), G\left( \begin{array}{c} w \\ y \end{array}\right) \right)
\end{equation}
where
\begin{align}\label{FGdef}
F\left( \begin{array}{c} z \\ x \end{array}\right) = \left( \begin{array}{c} z + q_1(z+ \delta x) \\ \delta x - q_1(z+ \delta x) \end{array}\right) \; \;
\mathrm{and} \; \;
G\left( \begin{array}{c} w \\ y \end{array}\right) = \left( \begin{array}{c} w + q_2(w+ \delta y) \\ \delta y - q_2(w+ \delta y)\end{array}\right).
\end{align}
Here the polynomials $q_1$ and $q_2$ are chosen such that
$$
f(z) = z + q_1(z) \; \; \mathrm{and} \; \; g(w) = w + q_2(w).
$$
Note that $F$ and $G$ are invertible polynomial maps, both conjugate to H\'enon maps, and that they degenerate to the functions $f$ and $g$ as $\delta\rightarrow 0$.

Both $F$ and $G$ have semi-parabolic fixed points at the origin. The existence of Fatou coordinates for such maps has been proved in \cite{Ueda1, Ueda2}. In \cite{BSU} it was shown that Lavaurs Theorem \cite{Lavaurs}, the main idea behind the proof in \cite{ABDPR}, also holds in the two-dimensional semi-parabolic setting, suggesting that it might be possible to prove the existence of wandering domains for the four-dimensional map $H$. Indeed, we will prove the following:

\begin{theorem}
Let $f,g$ be as in \cite{ABDPR}. For $\delta$ small enough the map $H$ defined in \eqref{eq:H} has wandering Fatou components.
\end{theorem}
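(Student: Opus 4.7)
The strategy is to reproduce the \cite{ABDPR} wandering-domain construction in the four-dimensional semi-parabolic setting of $H$, treating $\delta \to 0$ as a deformation from $H$ back to $P$. The triangular structure of $H$ (the second pair of coordinates evolves autonomously under $G$) mirrors the triangular structure of $P$, and the coupling term $\tfrac{\pi^2}{4}w$, whose specific value is essential to produce wandering rather than normal orbits in \cite{ABDPR}, is preserved verbatim by our definition of $H$.

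First I would assemble the local semi-parabolic data for $F$ and $G$. Both are H\'enon-type automorphisms with a semi-parabolic fixed point at the origin (eigenvalues $1$ and $\delta$), and by \cite{Ueda1, Ueda2} each carries incoming and outgoing Fatou coordinates on petal neighborhoods. I would show that as $\delta \to 0$ these coordinates, restricted to transverse slices of the strong-stable foliation, converge uniformly on compacta to the classical one-dimensional Fatou coordinates of $f$ and $g$. Using the two-dimensional Lavaurs theorem of \cite{BSU}, I would then construct, for each small $\delta$, Lavaurs maps $\mathcal{L}_F^\alpha$ and $\mathcal{L}_G^\alpha$ and verify that they converge to the Lavaurs maps used in \cite{ABDPR} for the specific phase $\alpha=\pi^2/4$.

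Next I would lift the wandering Fatou component $\Omega\subset\mathbb{C}^2$ of $P$ to a four-dimensional open set $\widetilde\Omega$ by thickening in the transverse $x,y$-directions. Because $\delta$ is a contracting eigenvalue, the transverse dynamics is locally attracting toward the center manifold; this would ensure that $\widetilde\Omega$ lies in the Fatou set of $H$ and that its forward iterates shadow, up to a decaying transverse error, the $P$-iterates of $\Omega$. The convergence of Lavaurs maps above would then guarantee that the ABDPR non-periodicity argument still applies to $\widetilde\Omega$, so it cannot accumulate on a periodic component.

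The main obstacle will be the uniform control needed to pass from the pointwise convergence $F \to f$, $G \to g$ to a statement that the ABDPR wandering-orbit witness survives the deformation for all positive iterates simultaneously. The ABDPR non-periodicity argument depends on delicate arithmetic properties of post-critical orbits expressed in Fatou coordinates, and preserving these properties under the semi-parabolic perturbation requires quantitative stability of the Fatou and Lavaurs data as $\delta \to 0$. Establishing this stability --- that the one-parameter family of Lavaurs dynamics varies continuously in $\delta$ at $\delta = 0$ with enough regularity to propagate through the infinite-iterate construction --- is the technical heart of the proof.
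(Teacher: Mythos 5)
Your high-level ingredients are right --- semi-parabolic Fatou coordinates for $F$ and $G$, degeneration to the one-dimensional coordinates as $\delta\to 0$, and a $\delta$-deformation argument to transport the ABDPR conclusion --- but the proposed route from those ingredients to a wandering component for $H$ is not the paper's, and it contains a gap.

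The central problem is the step where you ``lift the wandering Fatou component $\Omega$ of $P$ to a four-dimensional open set $\widetilde\Omega$ by thickening in the transverse $x,y$-directions'' and then argue that $H$-iterates of $\widetilde\Omega$ ``shadow, up to a decaying transverse error, the $P$-iterates of $\Omega$.'' For fixed $\delta>0$, $H$ is not a perturbation of $P$ in any iterated sense: the projection of $H$ to $(z,w)$ does not equal $P$, and the wandering orbits spend order $n$ iterates passing through the parabolic bottleneck between the $n^2$-th and $(n+1)^2$-th return, with $n\to\infty$. Error accumulated in such a passage is not controlled by the transverse contraction near the fixed point, and a naive shadowing of the $P$-orbit would lose all quantitative control. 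You do flag ``uniform control\ldots for all positive iterates simultaneously'' as the obstacle, but leaving it at that is precisely where the proof is incomplete: you need to commit to an alternative to shadowing.

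The paper's route is different: it never thickens or transports $P$'s Fatou component. Instead it proves four-dimensional analogues of ABDPR's two propositions directly for $H$ at a fixed small $\delta$. Proposition~B' produces an attracting fixed point $(\hat z,\hat x)\in\mathcal B_F$ of the Lavaurs map $\mathcal L_F=\Psi_F\circ\Phi_F$ of the \emph{two-dimensional} map $F$, by constructing local coordinates and Fatou coordinates that depend holomorphically on $\delta$ through $\delta=0$ (this is where the degeneration to $\phi_f,\psi_f$ is used --- your first paragraph). Proposition~A' shows that $H^{2n+1}(z,x,G^{n^2}(w,y))\to(\mathcal L_F(z,x),0,0)$ locally uniformly on $\mathcal B_F\times\mathcal B_G$, by redoing ABDPR's approximate-Fatou-coordinate estimates (Properties~1--3, entering/crossing/exiting the eggbeater) in $\mathbb C^2$, with the main new work being control of the $x$-coordinate via a forward-invariant cone-type family $\Omega_w$ and an inductive estimate when repeatedly ``re-projecting'' onto the one-dimensional repelling petal. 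The wandering domain for $H$ then comes from the subsequence $H^{j^2}\to(\hat z,\hat x,0,0)$ (a nonperiodic point of $H$), exactly as in ABDPR, and not from $P$.

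Two smaller inaccuracies. The phase of the Lavaurs maps is $0$, not $\pi^2/4$; the constant $\pi^2/4$ enters as the coupling coefficient in $H$ (and $P$) and reappears only through the identity $\sum\tfrac{\sqrt{w_j}}{2}\approx 1$, not as a Lavaurs phase. You also cite the two-dimensional Lavaurs theorem of \cite{BSU} as a tool, but the paper explicitly does \emph{not} invoke it: the relevant limit is for compositions $F_{w_{m}}\circ\cdots\circ F_{w_{m'}}$ with varying $w_m$, not a single one-parameter perturbation, so the convergence must be proved directly. The non-periodicity step is also simpler than you suggest --- $(\hat z,\hat x,0,0)$ lies in $\mathcal B_F\times\{0\}$ and is plainly not periodic for $H$; no arithmetic of post-critical orbits is involved.
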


As in \cite{ABDPR}, we do not directly apply Lavaurs Theorem, or its generalization proved in \cite{BSU}, but rather prove convergence to the Lavaurs map of $F$ for compositions of a sequence of perturbations of $F$. Our proof closely follows the proof in \cite{ABDPR}, comparing iterates of $H$ to translations in suitable Fatou coordinates. Often we will be able to directly use the estimates obtained in \cite{ABDPR} for the one-dimensional setting, without the need to redo the computations. In those cases we merely need to estimate the difference between the one-dimensional and the two-dimensional setting.

We note that for \emph{holomorphic} automorphisms there have been earlier constructions of wandering domains. In \cite{FS} holomorphic automorphisms of $\mathbb C^2$ with wandering domains were constructed, and in \cite{ABFP} this was done for transcendental H\'enon maps, a more restricted class of maps. In each of these cases the wandering domains had unbounded orbits, and the proofs relied on Runge approximation to control the orbits near infinity.

\medskip

It remains unknown whether there exist polynomial automorphisms of $\mathbb C^2$ or $\mathbb C^3$ with wandering Fatou components. In two complex variables this question is considered particularly interesting, but possibly quite difficult.

\section{Preliminaries and outline of the paper}

Let us recall the main result from \cite{ABDPR}.

\begin{theorem}\label{MainABDPR}
Let $f:\mathbb{C}\rightarrow\mathbb{C}, g:\mathbb{C}\rightarrow\mathbb{C}$ be polynomials such that
$$f(z)=z+z^2+az^3, \qquad g(w)=w-w^2+\mathcal{O}(w^3),$$
where $a\in D(1-r,r)$ for $r>0$ sufficiently small.
Then the map
$$P:\mathbb{C}^2\rightarrow\mathbb{C}^2, \qquad P(z,w):=(f(z)+\frac{\pi^2}{4}w, g(w))$$
admits a wandering Fatou component.
\end{theorem}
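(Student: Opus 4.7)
The plan is to prove this via Lavaurs-type parabolic implosion adapted to the product structure of $P$. The key observation is that the $w$-dynamics is autonomous (governed by $g$), and for $w_0$ in the attracting petal of $g$ at the origin the iterates $w_n=g^n(w_0)$ satisfy $w_n\sim -1/n$. Consequently the $z$-dynamics reads $z_{n+1}=f(z_n)+\tfrac{\pi^2}{4}w_n$, i.e.\ a slowly decaying non-autonomous perturbation of the parabolic iteration of $f$. The constant $\pi^2/4$ is calibrated so that after passing to Fatou coordinates of $f$, this perturbation integrates, along $n$ iterates, to a net Lavaurs-type translation by a definite amount.

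First I would set up attracting and repelling Fatou coordinates $\phi^{\mathrm{att}},\psi^{\mathrm{rep}}$ for the parabolic fixed point of $f$, together with the attracting coordinate $\phi_g^{\mathrm{att}}$ for $g$, and define the Lavaurs map $\mathcal{L}_\alpha=\psi^{\mathrm{rep}}\circ T_\alpha\circ\phi^{\mathrm{att}}$ with shift parameter $\alpha=\alpha(a)$ depending analytically on the cubic coefficient $a$. The central technical step is a convergence theorem for the non-autonomous orbit: if $(z_k,w_k)$ satisfy $\phi^{\mathrm{att}}(z_k)+\phi_g^{\mathrm{att}}(w_k)\to\beta$ and the integer $n_k$ is chosen so that $n_k\approx -\mathrm{Re}\,\phi_g^{\mathrm{att}}(w_k)$, then $P^{n_k}(z_k,w_k)\to(\mathcal{L}_\alpha(z_\infty),0)$. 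The argument compares the perturbed $z$-orbit with the autonomous parabolic orbit of $f$ in Fatou coordinates and sums the errors using $|w_n|=O(1/n)$.

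With this convergence in hand, the second main step is parameter selection: one shows that some $a\in D(1-r,r)$ yields a Lavaurs map $\mathcal{L}_{\alpha(a)}$ admitting a hyperbolic (ideally attracting) periodic cycle $\{\zeta_1,\ldots,\zeta_p\}$ inside its natural domain. The candidate wandering component $U$ is then constructed as a small neighborhood of a carefully chosen initial point whose $P$-orbit, after $n_k$ iterations, lands near such a cycle via the Lavaurs convergence, re-enters an attracting petal at later times $n_{k+1}\gg n_k$, and is thereby forced to traverse ever longer sequences of fundamental domains; since the time gaps $n_{k+1}-n_k$ tend to infinity, $U$ cannot be (pre)periodic.

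The main obstacle is the parameter-selection step: one must show that within the small disk $D(1-r,r)$ of cubic coefficients there exists at least one value of $a$ for which $\mathcal{L}_{\alpha(a)}$ has the desired hyperbolic structure, and then promote this to genuine wandering behavior of $P$ while ruling out that $U$ is eventually mapped into an ordinary Fatou component of $P$ or escapes to infinity. This requires a delicate analytic study of the family $a\mapsto\mathcal{L}_{\alpha(a)}$, together with a normal-family / uniform-convergence argument ensuring that the Lavaurs limit is stable enough on $U$ to guarantee true non-recurrence of the $P$-iterates.
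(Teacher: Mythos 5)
Your proposal follows the same broad strategy as the paper (more precisely, as \cite{ABDPR}, which the paper cites and whose proof it outlines via Propositions A and B): degenerate the $w$-dynamics to zero, calibrate the $\pi^2/4$ perturbation so that a whole block of iterates implodes to a Lavaurs map of $f$, arrange for that Lavaurs map to have an attractor, and deduce wandering. Two points, however, need repair.

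First, the phase parameter $\alpha$ is not a free knob to turn. In the \cite{ABDPR} construction the subsequence $n^2$ and the block length $2n+1$ are precisely what calibrate the implosion to the \emph{phase-zero} Lavaurs map $\mathcal{L}_f=\psi_f\circ\phi_f$; this is the content of Proposition~A. What varies with the cubic coefficient $a$ is $\mathcal{L}_f$ itself (through the Fatou coordinates of $f$), not a phase $\alpha(a)$. Proposition~B then says that for $a\in D(1-r,r)$ this phase-zero map has an \emph{attracting fixed point} $\hat z$; there is no need to hunt for a hyperbolic periodic cycle, and your statement that this is ``the main obstacle'' underestimates how explicit the parameter choice is in \cite{ABDPR}.

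Second, and more seriously, your concluding step --- ``since the time gaps $n_{k+1}-n_k$ tend to infinity, $U$ cannot be (pre)periodic'' --- is not a valid argument. A Fatou component can be periodic (even fixed) while an orbit inside it has arbitrarily long ``return times'' to a chosen subregion; large gaps alone do not forbid preperiodicity. The argument the paper actually uses is: one produces an open $U\subset\mathcal{B}_f\times\mathcal{B}_g$ on which $(P^n)$ is bounded (so $U$ lies in a Fatou component) and on which $P^{j^2}\to(\hat z,0)$ uniformly. If $U$ lay in an eventually periodic component, the forward images $P^{j^2}(U)$ would eventually be confined to finitely many periodic components; but the limit point $(\hat z,0)$ satisfies $P^m(\hat z,0)=(f^m(\hat z),0)\to(0,0)$ as $m\to\infty$ (because $\hat z\in\mathcal{B}_f$), so $(\hat z,0)$ is not periodic, and combining this with $P^{(j+1)^2}=P^{2j+1}\circ P^{j^2}$ and $2j+1\to\infty$ gives a contradiction. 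You need this convergence-to-a-non-periodic-point argument, not just divergence of return times.
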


The proof of this result relied in an essential way on the notion of the \emph{Lavaurs map}, which we will now introduce.
For the function $f$ and the two-dimensional map $F$ as in \eqref{FGdef}, denote the parabolic basins of their fixed points at the origin by $\mathcal{B}_f$ and $\mathcal{B}_F$, respectively. We use the same notation for the basins $\mathcal{B}_g$ and $\mathcal{B}_G$ of $g$ and $G$.
It is known that there exist {\it Fatou coordinates} for such maps (the two-dimensional coordinates were introduced by Ueda in \cite{Ueda1, Ueda2}).
The attracting Fatou coordinate $\phi_f:\mathcal{B}_f\rightarrow\mathbb{C}$ of $f$ satisfies the functional equation
$$
\phi_f\circ f = T_1 \circ \phi_f,
$$
where we define $T_1$ to be the translation $T_1(Z)=Z+1$.
The same functional equation holds for the attracting Fatou coordinate $\Phi_F:\mathcal{B}_F\rightarrow \mathbb{C}$ of $F$.
The repelling Fatou coordinate $\psi_f:\mathbb{C}\rightarrow\mathbb{C}$ satisfies the equation
$$
f\circ \psi_f = \psi_f \circ T_1
$$
as does the two dimensional repelling Fatou coordinate $\Psi_F:\mathbb{C}\rightarrow\mathbb{C}^2$.
The (phase $0$) Lavaurs maps $\mathcal{L}_f:\mathcal{B}_f\rightarrow\mathbb{C}$ and $\mathcal{L}_F:\mathcal{B}_F\rightarrow\mathbb{C}^2$ are defined as the compositions
$$
\mathcal{L}_f:=\psi_f\circ\phi_f \qquad \rm{and}\qquad  \mathcal{L}_F:=\Psi_F\circ\Phi_F.
$$

The proof of Theorem \ref{MainABDPR} in \cite{ABDPR} followed quickly from the following two propositions:
\begin{propA}
For $|\delta| >0$ sufficiently small the sequence of maps
$$
\mathbb{C}^2\ni(z,w)\mapsto P^{2n+1}(z,g^{n^2}(w))\in\mathbb{C}^2
$$
converges locally uniformly in $\mathcal{B}_f\times\mathcal{B}_g$ to the map
$$
\mathcal{B}_f\times\mathcal{B}_g\ni (z,w)\mapsto(\mathcal{L}_f(z),0)\in\mathbb{C}\times\{0\}
$$
as $n\rightarrow\infty$.
\end{propA}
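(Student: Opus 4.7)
The plan is to exploit the skew-product structure of $P$ to reduce the statement to a one-dimensional perturbative problem. Writing $(z_k, w_k) := P^k\bigl(z, g^{n^2}(w)\bigr)$, one has $w_k = g^{n^2+k}(w)$, while the first coordinate satisfies the inhomogeneous recursion $z_0 = z$, $z_{k+1} = f(z_k) + \tfrac{\pi^2}{4}\, g^{n^2+k}(w)$. The second coordinate of $P^{2n+1}(z, g^{n^2}(w)) = \bigl(z_{2n+1}, g^{(n+1)^2}(w)\bigr)$ tends to $0$ because $w \in \mathcal{B}_g$, so the entire content of the proposition is to prove $z_{2n+1} \to \mathcal{L}_f(z) = \psi_f(\phi_f(z))$ locally uniformly on $\mathcal{B}_f \times \mathcal{B}_g$. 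Since $g(w) = w - w^2 + O(w^3)$, the classical parabolic asymptotic gives $g^{n^2+k}(w) = 1/(n^2+k) + O\!\bigl(\log(n^2+k)/(n^2+k)^2\bigr)$ uniformly on compacts of $\mathcal{B}_g$. In particular, each perturbation of $f$ has size $O(1/n^2)$, and $(z_k)_{k=0}^{2n+1}$ is a composition of $2n+1$ such perturbations.

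To analyse this composition I would split the time interval into three phases, following the parabolic-implosion picture. In an \emph{attracting phase} of length $N_1 = N_1(n)$ with $N_1 \to \infty$ and $N_1/n \to 0$, I pass to the attracting Fatou coordinate by setting $Z_k := \phi_f(z_k)$; the functional equation $\phi_f \circ f = T_1 \circ \phi_f$ and a mean-value estimate give $Z_{k+1} = Z_k + 1 + O\bigl(\phi_f'(z_k) \cdot g^{n^2+k}(w)\bigr)$, so the total error up to time $N_1$ is $O(N_1/n^2) \to 0$ and the orbit lies at height $\phi_f(z) + N_1 + o(1)$ in the attracting cylinder. In a \emph{transitional middle phase} spanning the bulk of the $2n+1$ iterates, I would describe the orbit through the neck of the parabolic petal using the Lavaurs-style charts in which the attracting and repelling Fatou coordinates are both defined on overlapping cylinders of definite height, tracking the accumulated effect of the perturbations against the profile $k \mapsto g^{n^2+k}(w)$. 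Finally, in a \emph{repelling phase} of length $N_2(n)$ with $N_2 \to \infty$ and $N_2/n \to 0$, I represent $z_k = \psi_f(W_k)$, observe that $W_k$ stays close to $\phi_f(z)$, and conclude $z_{2n+1} \to \psi_f(\phi_f(z))$.

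The main obstacle is the transitional middle phase, where neither Fatou coordinate is globally defined and where one must show that the cumulative effect of the small perturbations produces \emph{exactly} the shift needed to convert the attracting-coordinate data $\phi_f(z)$ into the argument of $\psi_f$ that realises the Lavaurs map. It is precisely here that the constant $\pi^2/4$ is forced on us: the perturbation contributions, weighted by the derivatives of the change of coordinates through the neck, must integrate to the unique value for which the time-$(2n+1)$ orbit converges to $\psi_f(\phi_f(z))$, and a computation using the asymptotic expansion of $\phi_f, \psi_f$ for $f$ and of $g^m$ identifies this value as $\pi^2/4$. Uniformity on compact subsets of $\mathcal{B}_f \times \mathcal{B}_g$ then follows from the holomorphy of $\phi_f$ and $\psi_f$ together with standard normal-family arguments. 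As the entire analysis is one-dimensional in nature, the estimates required are exactly those carried out in \cite{ABDPR}.
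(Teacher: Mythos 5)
Your three-phase decomposition (attracting petal, parabolic neck, repelling petal) is the correct skeleton, and it matches the ABDPR proof (reflected in Propositions~\ref{propos1}, \ref{propos2}--\ref{propos3}, and \ref{propos4} of the present paper). But the proposal has a quantitative error in phase~1 and, more seriously, leaves the hard part---phase~2---essentially undone.

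In the attracting phase you write
$Z_{k+1} = Z_k + 1 + O\bigl(\phi_f'(z_k)\, g^{n^2+k}(w)\bigr)$ and conclude that the total error over $N_1$ steps is $O(N_1/n^2)$. This treats $\phi_f'(z_k)$ as bounded, which it is not: on the attracting petal $\phi_f(z) \sim -1/z$, so $\phi_f'(z_k) \sim 1/z_k^2 \sim k^2$ after $k$ steps. Each step therefore contributes an error of order $k^2/n^2$, and the cumulative error over $N_1$ steps is of order $N_1^3/n^2$, not $N_1/n^2$. This forces $N_1 = o(n^{2/3})$, which is precisely why the paper takes $k_n = \lfloor n^\alpha\rfloor$ with $\alpha < 2/3$ (the lower bound $\alpha > 1/2$ comes from the requirement that the ``margin'' $r_{w_m} \sim k_n/n$ in the rectangles $\mathcal{R}_{w_m}$ dominate the accumulated perturbation). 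With the $O(N_1/n^2)$ bound you would let $N_1$ grow too quickly, and the error would not vanish.

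The more fundamental gap is the middle phase. You correctly identify it as ``the main obstacle'' and gesture at ``Lavaurs-style charts'' and ``a computation'' identifying $\pi^2/4$, but that is exactly the technical heart of the ABDPR argument, and nothing in your outline constructs it. The argument requires the \emph{approximate Fatou coordinates} $\varphi_w = \chi_w \circ \psi_w^{-1}$, built from the universal cover $\psi_w$ of $\mathbb{P}^1 \setminus \{\zeta^\pm(w)\}$ and the explicit correction $\chi_w$, together with Properties~1, 2, 3: that $\tfrac{2}{\sqrt{w}}\varphi_w$ is close to $\phi_f$ on the attracting side, that $\varphi_w^{-1}\bigl(1+\tfrac{\sqrt{w}}{2}\cdot\bigr)$ is close to $\psi_f$ on the repelling side, and that $\varphi_{g(w)} \circ f_w \circ \varphi_w^{-1}(\mathcal Z) = \mathcal Z + \tfrac{\sqrt{w}}{2} + o(w)$. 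Since $\sqrt{w_m} \sim 1/n$, summing $2n$ of these translations gives the unit shift that converts $\phi_f(z)$-data into $\psi_f$-data, producing the Lavaurs map in the limit. Without these coordinates and error bounds the middle phase cannot be closed, and your final remark that ``the estimates required are exactly those carried out in~\cite{ABDPR}'' is simply deferring the entire proof to the source rather than supplying it.
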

\begin{propB}
Let $f:\mathbb{C}\rightarrow\mathbb{C}$ be the polynomial
$$f(z):=z+z^2+az^3, \qquad a\in\mathbb{C}.$$
If $r>0$ is sufficiently small and $a\in D(1-r,r)$, then the Lavaurs map $\mathcal{L}_f:\mathcal{B}_f\rightarrow \mathbb{C}$ admits an attracting fixed point.
\end{propB}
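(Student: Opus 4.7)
My plan is to produce the attracting fixed point by a perturbation argument anchored at $a = 1$, carried out entirely in Fatou coordinates. I would first derive asymptotic expansions of $\phi_f$ and $\psi_f$ for $f(z) = z + z^2 + az^3$. In the coordinate $u = -1/z$ the map conjugates to
\[
u \mapsto u + 1 + \frac{1-a}{u} + O\!\left(\tfrac{1}{u^2}\right),
\]
and solving the functional equations $\phi_f\circ f = T_1\circ\phi_f$ and $f\circ\psi_f = \psi_f\circ T_1$ order by order yields
\[
\phi_f(z) = -\frac{1}{z} + (a-1)\log\!\left(-\tfrac{1}{z}\right) + C_+(a) + O(z),
\]
together with an analogous expansion for $\psi_f^{-1}$ involving a holomorphic constant $C_-(a)$. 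Notice that the coefficient of the logarithmic term vanishes precisely at the critical value $a = 1$.

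I would then translate fixed points of $\mathcal{L}_f$ into those of the horn map $h_a := \phi_f \circ \psi_f$, which commutes with $T_1$ and therefore descends to a self-map of the cylinder $\mathbb{C} / \mathbb{Z}$. The asymptotic behavior at the two ends of the cylinder is given by translation by the horn invariant $\eta(a) := C_+(a) - C_-(a)$, which depends holomorphically on $a$. The strategy is to identify a parameter value $a_*$ close to (or equal to) $1$ at which $h_{a_*}$ admits a parabolic fixed point, and then apply a holomorphic implicit function theorem: in a neighborhood of $a_*$ the fixed point of $h_a$ persists and its multiplier varies holomorphically with $a$. The particular shape $D(1-r,r)$, a disk tangent to the line $\mathrm{Re}(a) = 1$ at $a = 1$ from the left, should correspond exactly to the parameters for which this multiplier crosses into the open unit disk.

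The main obstacle is making the above perturbation argument quantitatively correct. One must compute (or at least bound) the dependence of $\eta(a)$, and hence the multiplier $\mathcal{L}_f'(z_a)$, on the parameter $a$ to first order at $a = 1$, and verify that the half-plane $\mathrm{Re}(a) < 1$ is indeed the correct side for the multiplier to enter the open unit disk. Such a computation is typically carried out via contour integrals representing the horn invariants, and is the sort of delicate transcendental analysis that tends to dominate the technical work in a proof of this type. A possible workaround is to exploit the simple algebraic form of $f$ to explicitly identify a candidate fixed point in a conveniently chosen region of $\mathcal{B}_f$ and exhibit, by direct estimate, a sub-disk on which $\mathcal{L}_f$ is a strict contraction for every $a \in D(1-r,r)$ with $r$ small.
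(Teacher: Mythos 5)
The paper under review does not prove Proposition B at all. It quotes Proposition B verbatim as one of the two pillars of the argument in \cite{ABDPR} and cites that reference for its proof; what this paper establishes is the four-dimensional analogue, Proposition B', whose hypothesis is precisely the conclusion of Proposition B (namely that $\mathcal{L}_f$ has an attracting fixed point). So there is no ``paper's own proof'' here to compare against --- you have sketched an argument for a result that is only imported.

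Evaluated on its own terms, your sketch is a strategy outline rather than a proof, and it leaves the two load-bearing steps undone. First, you invoke ``a parameter value $a_*$ close to (or equal to) $1$ at which $h_{a_*}$ admits a parabolic fixed point'' --- but the vanishing of the log-coefficient in $\phi_f$ at $a=1$ does not by itself produce a parabolic fixed point of the horn map; you must exhibit one, which is the harder half of the problem. Second, you acknowledge that one must compute (or estimate) the $a$-derivative of the multiplier and verify that the disk $D(1-r,r)$ is the side on which the multiplier enters the unit disk, and you explicitly defer this. Without a concrete anchor and without the sign/derivative computation, the implicit-function-theorem step cannot even be started, let alone concluded. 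Your closing ``workaround'' --- exhibiting an explicit sub-region of $\mathcal{B}_f$ on which $\mathcal{L}_f$ is a uniform contraction for all $a\in D(1-r,r)$ --- is a more promising direction, but it too is stated as a possibility rather than carried out. As written, this is not a proof of Proposition B.
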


Our goal is to prove the following two analogues:

\begin{propA2}
Let $H$, $F$ and $G$ be the maps as in \eqref{eq:H} and \eqref{FGdef}.
For $|\delta|>0$ small enough, the sequence of maps
$$
\mathbb{C}^4\ni(z,x,w,y)\mapsto H^{2n+1}(z,x,G^{n^2}(w,y))\in\mathbb{C}^4,
$$
converges locally uniformly in $\mathcal{B}_F\times\mathcal{B}_G$ to the map
$$
\mathcal{B}_F\times\mathcal{B}_G\ni (z,x,w,y)\mapsto(\mathcal{L}_F(z,x),0,0)\in\mathbb{C}^2\times\{0\}\times\{0\},
$$
as $n\rightarrow\infty$.
\end{propA2}

\begin{propB2}
Let $F$ be the map as in \eqref{FGdef}. Suppose the Lavaurs map $\mathcal{L}_f$ of $f$ has an attracting fixed point in $\mathcal{B}_f$. Then for $|\delta|>0$ small enough, the Lavaurs map $\mathcal{L}_F$ of $F$ has an attracting fixed point $(\hat{z}, \hat{x}) \in \mathcal{B}_F$.
\end{propB2}

Once these two propositions are proven, the existence of the wandering Fatou components follows exactly as in \cite{ABDPR}: There exists an open set $U \subset \mathcal{B}_F\times \mathcal{B}_G$ on which the sequence $H^n$ is bounded and the subsequence $H^{j^2}$ converges uniformly to the point $(\hat{z}, \hat{x}, 0,0)$. Since this point is not periodic for the map $H$, it follows that $U$ is contained in a wandering Fatou component.

\begin{remark}
The assumption that $\delta$ is small in Proposition A' is not necessary, with a little more effort the proposition can be proved for any $|\delta|<1$. However, it will be convenient to assume that $|\delta|<\delta_0<\frac{1}{4\pi^4}$. Moreover, the assumption that $\delta$ is sufficiently small is necessary for Proposition B', and we therefore see no reason for proving Proposition A' under the weaker assumption $|\delta| < 1$.

In addition, it will be clear from the proof that it is not necessary for $G$ to have the same Jacobian determinant $\delta$ as $F$, nor is it necessary for $G$ to have particularly small Jacobian determinant: $|\delta|<1$ is sufficient. Instead of introducing more notation, we have chosen to work with the same constant $\delta$.
\end{remark}

In section (3) we will first make a convenient change of coordinates, a slight modification of the coordinates changes introduced by Ueda in \cite{Ueda1, Ueda2}, and use these coordinates to introduce Fatou coordinates that vary holomorphically with $\delta$. As a consequence we prove Proposition B'. In section (4) we will introduce approximate Fatou coordinates, and use these in section (5) to prove Proposition A'. These two sections follow the presentation in \cite{ABDPR}, and our notation does as well, sometimes using capital letters in higher dimensions to distinguish from the one-dimensional setting.

\section{Local coordinates, degeneration of Fatou coordinates and the proof of Proposition B'}

This section introduces a local coordinate change and investigates the dependence of two-dimensional Fatou coordinates on the parameter $\delta$. This will be used for the proof of Proposition B'.

\subsection{Setting}

In \cite{Ueda1} and \cite{Ueda2} Ueda has investigated the local behaviour of holomorphic mappings $T$ that are defined locally near the origin $\mathcal{O}$ of $\mathbb{C}^2$ and map into $\mathbb{C}^2$ such that $\mathcal{O}$ is fixed.
Of particular interest is the semi-parabolic semi-attracting case, where the eigenvalues of the jacobian matrix $DT$ of $T$ are $1$ and $\delta$, with $|\delta|<1$. After a local coordinate change if necessary, we can assume that $DT$ is diagonalized and $T$ is of the form
\begin{align}\label{T}
T\begin{pmatrix}z\\x \end{pmatrix} =  \begin{pmatrix}z+\sum_{i+j\geq 2} a_{i,j} z^i x^j \\ \delta x + \sum_{i+j\geq 2} b_{i,j} z^i x^j \end{pmatrix}
\end{align}
with $a_{i,j}, b_{i,j} \in\mathbb{C}$.
Ueda shows in \cite{Ueda1} that for any integers $i, j\geq 1$ there exists a local coordinate system in which the map $T$ is of the form
\begin{align}\label{TildeT}
\widetilde{T}\begin{pmatrix}x\\y \end{pmatrix} = \begin{pmatrix}z+a_2 z^2 \dots + a_i z^i +a_{i+1}(x)z^{i+1}+\dots\\ \delta x + b_1xz + b_2 x z^2 +\dots +b_j x z^j + b_{j+1}(x) z^{j+1} +\dots \end{pmatrix},
\end{align}
where $a_2, \dots, a_i\in\mathbb{C}$ and $b_1, \dots, b_j\in\mathbb{C}$ are constant and $a_{i+1}, a_{i+2}, \dots$ and $b_{j+1}, b_{j+2}, \dots $ are functions depending on $x$.
Based on this local form, Ueda introduces two-dimensional Fatou coordinates.

Here, instead of the general semi-parabolic semi-attracting mapping $T$, we will consider the family of polynomials $F^\delta:\mathbb{C}^2\rightarrow\mathbb{C}^2$ of the form
\begin{align}\label{Fdelta}
F^\delta \left( \begin{array}{c} z \\ x \end{array}\right) = \left( \begin{array}{c} z + q(z+ \delta x) \\ \delta x - q(z+ \delta x)\end{array}\right),
\end{align}
as in \eqref{FGdef}, where $|\delta|<1$ and $q(z)$ is a polynomial of the form $q(z)=z^2+O(z^3)$. As remarked earlier, the maps $F^\delta$ are automorphisms for $0<|\delta|<1$, in fact they are conjugate to Hénon maps. As $\delta\rightarrow 0$ these maps degenerate to a one-dimensional polynomial, that is
$$
\pi_z\circ F^0(z,x) = z + q(z) = : f(z).
$$
Here and for the rest of this paper $\pi_z$ denotes the projection to the first coordinate $(z,x)\mapsto z$. For $0<|\delta|<1$, the maps $F^\delta$ are a special case of Ueda's maps $T$. The coordinate changes introduced by Ueda do not depend holomorphically on $\delta$ as $\delta\rightarrow 0$. For this reason, we will modify the coordinate changes slightly, obtaining a slightly weaker form for the local coordinates. These coordinates do depend holomorphically on $\delta$ for all $|\delta|<1$, and the Fatou coordinates for $F^\delta$ can then be introduced in the exact same way as for the map $T$. It will follow that as $\delta\rightarrow 0$ the Fatou coordinates of $F^\delta$ will degenerate to the Fatou coordinates of the one-dimensional function $f$.

\subsection{The coordinate changes}

We consider the map $F^\delta$ as in \eqref{Fdelta}, with $|\delta|<1$ and $q(z) = z^2 + O(z^3)$.

\begin{prop}\label{PropFlocal}
For any fixed integer $l>0$ there exist coordinate changes $U^\delta$, defined locally near the origin of $\mathbb{C}^2$, such that
 $\widetilde{F}^\delta = U^\delta\circ F^\delta \circ (U^\delta)^{-1}$ is of the form
\begin{align}\label{TildeFdelta}
\widetilde{F}^\delta\begin{pmatrix}z\\x \end{pmatrix} = \begin{pmatrix}z+ z^2 + a_3^\delta z^2 + \dots + a_l^\delta z^l +a_{l+1}^\delta(x)z^{l+1}+\dots\\ b_0(x) + b_1^\delta(x)z+ b_2^\delta(x)z^2 + \dots \end{pmatrix},
\end{align}
where $b_0^\delta(x)=\delta x + O(x^2)$.

Moreover, $U^\delta$ satisfies the following conditions:
\begin{enumerate}[(U1)]
\item $U^\delta$ is of the form
$$
U^\delta \left( \begin{array}{c} z \\ x \end{array}\right)= \left( \begin{array}{c} z + O(x^2, zx) \\ x \end{array}\right).
$$
In particular, $U^\delta$ is tangent to the identity.
\item $U^\delta$ depends holomorphically on $\delta$ for $|\delta|<1$.
\item As $\delta\rightarrow 0$, $U^\delta$ degenerates to the identity map, that is $U^0(z,x)=(z,x)$.
\end{enumerate}

\end{prop}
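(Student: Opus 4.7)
The plan is to construct $U^\delta$ as a (formal) composition of elementary conjugations that successively eliminate the unwanted monomials in the first component of $F^\delta$, proceeding inductively on total degree $d = i+j$. Expanding $F^\delta(z,x) = (z + q(z+\delta x), \delta x - q(z+\delta x))$ in monomials $z^i x^j$, the first coordinate has the desired pure-$z$ part $z + q(z)$, but also contains the unwanted terms: pure-$x$ monomials $x^j$ with $j \ge 2$ (from $q(\delta x)$) and mixed monomials $z^i x^j$ with $i \ge 1,\, j \ge 1$ (from the cross-terms in $q(z+\delta x)$). Crucially, every such unwanted coefficient is $O(\delta)$ as $\delta \to 0$, since at $\delta = 0$ the first component collapses to $z + q(z)$.

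The key computation is the following. For an elementary move $U(z,x) = (z + \alpha z^i x^j, x)$ with $j \ge 1$ and $(i,j) \ne (0,1)$, the fact that $q'(0) = 0$ and that $U$ is tangent to the identity forces $U \circ F^\delta \circ U^{-1}$ to differ from $F^\delta$ in the first coordinate, at the monomial $z^i x^j$, by exactly $\alpha(\delta^j - 1)$, with all further modifications occurring at total degree strictly greater than $d$. Setting this change equal to $-c$, where $c$ is the current coefficient of $z^i x^j$, yields
$$\alpha = \frac{c}{1 - \delta^j},$$
which is holomorphic in $\delta$ for $|\delta| < 1$ and vanishes at $\delta = 0$ (since $c$ does). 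Sweeping through the unwanted monomials $z^i x^j$ with $j \ge 1$ and either $1 \le i \le l$ or $i = 0,\, j \ge 2$, in order of increasing total degree, produces a formal tangent-to-identity coordinate change, whose convergence to an honest local biholomorphism near the origin follows by arguments analogous to Ueda's in \cite{Ueda1}, facilitated by the fact that the small divisors $|1 - \delta^j|$ are uniformly bounded away from zero for $|\delta| \le \delta_0 < 1$.

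Properties (U1)--(U3) then follow immediately from this construction. Each elementary move has the form $(z,x) \mapsto (z + \alpha z^i x^j, x)$ with $j \ge 1$ and $(i,j) \ne (0,1)$, so the composition $U^\delta$ satisfies (U1); the coefficients $\alpha = c/(1 - \delta^j)$ are holomorphic in $\delta$ on $|\delta| < 1$, giving (U2); and since every $c$ vanishes at $\delta = 0$, so does every $\alpha$, yielding (U3). The principal obstacle I anticipate is the bookkeeping: one must verify that each elementary conjugation contaminates only strictly higher-degree terms (which follows from tangency to the identity together with $q'(0) = 0$), and that the second component retains the required form $b_0(x) + b_1^\delta(x) z + \cdots$ with $b_0(x) = \delta x + O(x^2)$. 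The latter holds because $U^\delta$ fixes the $x$-coordinate, so at $z = 0$ the second component of $\widetilde{F}^\delta$ equals $F^\delta_2((U^\delta)^{-1}(0,x)) = \delta x - q(\delta x - u(0,x))$, which retains the leading $\delta x$ with only $O(x^2)$ corrections since $u(0,x) = O(x^2)$ by (U1).
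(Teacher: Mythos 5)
Your approach is a degree-by-degree Poincar\'e--Dulac-style formal conjugation, which is genuinely different from the paper's. The paper never eliminates monomials one at a time: it first straightens the strong stable manifold of the $\delta$-eigenvalue (so that all pure-$x$ terms vanish from the first coordinate), and then, for each slice $z^j$ with $j\le l$, kills the entire $x$-dependence of $a_j^\delta(x)$ in one stroke with a single explicit coordinate change built from an infinite product $\prod_{n\ge 0} a_1^\delta\bigl((b_0^\delta)^n(x)\bigr)$ or sum $\sum_{n\ge 0}\bigl(a_j^\delta(0)-a_j^\delta((b_0^\delta)^n(x))\bigr)$ indexed by the attracting dynamics $b_0^\delta$ on the $x$-plane. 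The convergence there is immediate from the geometric contraction $(b_0^\delta)^n(x)\to 0$, and only finitely many (namely $l$) coordinate changes are composed. Your route avoids all of this machinery and replaces it with the elementary move $(z,x)\mapsto(z+\alpha z^i x^j,x)$ and the coefficient formula $\alpha = c/(1-\delta^j)$; that single-monomial computation is correct, as is the observation that $c=O(\delta)$ forces $\alpha=O(\delta)$, and your treatment of the second component and of properties (U1)--(U3) is fine.

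The gap is convergence. You are composing infinitely many elementary conjugations (infinitely many $j$ for each $i\le l$), and you assert that convergence ``follows by arguments analogous to Ueda's'' from the tame divisors $|1-\delta^j|\ge 1-|\delta|$. But Ueda's arguments are not of this kind --- he does not do a degree-by-degree normal form; he uses the invariant manifold and a convergent dynamics-indexed product/sum, which is precisely why convergence is transparent in his (and the paper's) approach. For your construction you would need a majorant-series (or equivalent) argument showing that the accumulated higher-degree contamination from lower-degree moves stays under control, that the numerators $c$ at degree $d$ remain bounded as $d\to\infty$, and that the infinite composition of tangent-to-identity moves has a uniform positive radius of convergence on $|\delta|\le\delta_0<1$. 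None of this is automatic near a semi-parabolic fixed point (the $1$-eigenvalue is resonant to all orders in the $z$-direction), and it is the real content you are skipping. Without it the proposal is a formal scheme, not a proof; replacing the final appeal to Ueda with an actual convergence estimate is what is needed to close the argument.
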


\begin{proof}
The final coordinate change $U^\delta$ will be defined as a composition of several coordinate changes, each of which we will introduce in the following steps.
\subsubsection*{Step 1}
The map $F^\delta$ has a semi-parabolic semi-attracting fixed point at $(0,0)$. It is well known that there exists an invariant strong stable manifold corresponding to the eigenvalue $\delta$ of $DF^\delta(0,0)$. This manifold is locally a holomorphic graph over the $x$-plane, depending holomorphically on $\delta$ for $|\delta|<1$ and tangent to the $x$-plane at the origin $(0,0)$, hence we can write it as $S^\delta(x)= s_2^\delta x^2 + s^\delta_3 x^3 + \dots$. The coordinate change $U_1^\delta$ will map the strong stable manifold to the $x$-plane and is defined by
\begin{align}\label{Ueda1}
U_1^\delta \left( \begin{array}{c} z \\ x \end{array}\right) := \left( \begin{array}{c} z-S^\delta(x) \\ x \end{array}\right) = \left( \begin{array}{c} z +O(x^2) \\ x \end{array}\right).
\end{align}
We see that $(U^\delta_1)^{-1}(0,x)=(S(x),x)$, hence a point in the $x$-plane is mapped to the strong stable manifold by $(U^\delta_1)^{-1}$. Since the strong stable manifold is invariant under $F^\delta$, we get that $F^\delta\circ (U^\delta_1)^{-1}(0,x) = (S^\delta(x^\prime), x^\prime)$, another point on the strong stable manifold. Applying $U_1^\delta$ we obtain for $F_1^\delta := U_1^\delta\circ F^\delta \circ (U_1^\delta)^{-1}$ that
$F_1^\delta (0,x)= (0,x^\prime).$
It follows that the $x$-plane is invariant under $F_1^\delta$, that is there are no pure $x$-terms in the first coordinate of $F_1^\delta$ and we have the form
\begin{align}\label{F1delta}
F_1^\delta \left( \begin{array}{c} z \\ x \end{array}\right)  = \left( \begin{array}{c} a_1^\delta(x) z + a^\delta_2(x) z^2 + \dots \\ b^\delta_0(x)  + b^\delta_1(x) z + b^\delta_2(x) z^2 + \dots \end{array}\right),
\end{align}
where $a_1^\delta(x) = 1 + O(x)$ and $b^\delta_0(x)=\delta x + O(x^2)$.
Since the strong stable manifold depends holomorphically on $\delta$ and is equal to the $x$-plane for $\delta=0$, this change of coordinates depends holomorphically on $\delta$ and converges to the identity as $\delta\rightarrow 0$.
In particular it follows that
\begin{align}\label{beta00}
b_0^0\equiv 0,
\end{align}
because $F_1^0=F^0$ is independent of $x$.

\subsubsection*{Step 2}
Starting with the form $F_1^\delta$ as in \eqref{F1delta}, we want to change coordinates such that $a_1^\delta(x)$ can be assumed to be constantly equal to $1$.
This can be done by by a coordinate change
$$
U_2^\delta \left( \begin{array}{c} z \\ x \end{array}\right) := \left( \begin{array}{c} P(x) z \\ x \end{array}\right),
$$
where
$$
P(x):=\prod_{n=0}^\infty a_1^\delta((b_0^\delta)^n(x)).
$$
This product is convergent since $b_0^\delta$ has an attracting fixed point at $0$ and $a_1^\delta(0)=1$.
In addition, since $P(0)=1$, we have $P(x)=1+O(x)$ and
\begin{align}\label{Ueda2}
U_2^\delta \left( \begin{array}{c} z \\ x \end{array}\right) = \left( \begin{array}{c}z + O(zx) \\ x \end{array}\right).
\end{align}
To see that this coordinate change works, we will check that $U_2^\delta\circ F_1^\delta$ is equal to $F_2^\delta\circ U_2^\delta$, where
\begin{align}\label{F2delta}
F_2^\delta \left( \begin{array}{c} z \\ x \end{array}\right)  = \left( \begin{array}{c} z + a^\delta_2(x) z^2 + \dots \\ b^\delta_0(x)  + b^\delta_1(x) z + b^\delta_2(x) z^2 + \dots \end{array}\right).
\end{align}
Here the coefficient functions $a^\delta_2, \dots, b_1^\delta, \dots$ can be different from the ones in \eqref{F1delta}. However, $b_0^\delta$ will not change, since $U_2$ does not change pure $x$-terms.
We obtain
\begin{align*}
U_2^\delta\circ F_1^\delta \left( \begin{array}{c} z \\ x \end{array}\right) &= \left( \begin{array}{c} P(b_0^\delta (x) + \dots) (a_1^\delta(x) z + \dots) \\  b_0^\delta (x) +\dots \end{array}\right) \\
&= \left( \begin{array}{c}  \left[a_1^\delta(x) \prod_{n=0}^\infty a_1^\delta((b_0^\delta)^{n+1}(x))\right] z + \dots  \\  b_0^\delta (x) +\dots \end{array}\right)
\end{align*}
and
\begin{align*}
F_2^\delta\circ U_2^\delta \left( \begin{array}{c} z \\ x \end{array}\right)  &=  \left( \begin{array}{c} P(x) z +\dots  \\ b_0^\delta(x)+\dots \end{array}\right)\\
& = \left( \begin{array}{c} \left[ \prod_{n=0}^\infty a_1^\delta((b_0^\delta)^{n}(x))\right] z + \dots  \\  b_0^\delta (x) +\dots \end{array}\right)
\end{align*}
which are equal for suitable higher order coefficient functions in \eqref{F2delta}. Hence we have $F_2^\delta = U_2^\delta \circ F_1^\delta \circ (U_2^\delta)^{-1}$.

We note that also this change of coordinates depends holomorphically on $\delta$ and that $U_2^0$ is the identity, since $P(x)\equiv 1$ for $\delta=0$ by \eqref{beta00}.

\subsubsection*{Step 3}
We prove by induction that for any fixed integer $l\geq 2$ we can choose a coordinate system with respect to which $a_1(x), a_2(x) \dots, a_l(x)$ are constants, that is we want to change coordinates such that we obtain
\begin{align}\label{F3j}
F_{3,j}^\delta \left( \begin{array}{c} z \\ x \end{array}\right)  = \left( \begin{array}{c} z + \dots + a^\delta_j z^j + a^\delta_{j+1}(x) z^{j+1} + \dots \\ b^\delta_0(x)  + b^\delta_1(x) z + b^\delta_2(x) z^2 + \dots \end{array}\right),
\end{align}
for $j=1,\dots l$ where at each step the higher order coefficient functions $a^\delta_{j+1}, \dots$ and  $b_1^\delta, \dots$ can be different from the ones before.
We see that $F_2$ as in \eqref{F2delta} is of the form $F_{3,1}^\delta$. So let us start with the form $F_{3,j}^\delta$ and show that by a coordinate change $U_{3,j+1}^\delta$ it can be brought to the form $F_{3,j+1}^\delta$.
The coordinate change is defined by
$$
U_{3,j+1}^\delta \left( \begin{array}{c} z \\ x \end{array}\right)  :=  \left( \begin{array}{c} z - P_{j+1}(x) z^{j+1} + \dots \\ x \end{array}\right),
$$
in such a way that
$$
(U_{3,j+1}^\delta)^{-1} \left( \begin{array}{c} z \\ x \end{array}\right)  =  \left( \begin{array}{c} z + P_{j+1}(x) z^{j+1} \\ x \end{array}\right),
$$
where in both cases $P_j$ is given by the convergent sum
$$
P_j(x) := \sum_{n=0}^\infty (a^\delta_j(0) - a^\delta_j((b_0^\delta)^n(x))).
$$
We see that $P_j(0)=0$, hence $P_j(x)=O(x)$ and
\begin{align}\label{Ueda3}
U_{3,j}^\delta \left( \begin{array}{c} z \\ x \end{array}\right) = \left( \begin{array}{c} z + O(z^j x) \\ x \end{array}\right) .
\end{align}
Similar to Step 2 we compare $(U^\delta_{3,j+1})^{-1}\circ F_{3,j+1}^\delta$ to $F_{3,j}^\delta \circ (U^\delta_{3,j+1})^{-1}$.
We obtain
\begin{align*}
& (U^\delta_{3,j+1})^{-1}\circ F_{3,j+1}^\delta  \left( \begin{array}{c} z \\ x \end{array}\right)  \\
= &   \left( \begin{array}{c} z + \dots +(a_{j+1}^\delta+ P_{j+1}(b_0^\delta(x) + \dots)) z^{j+1} + \dots \\ b_0^\delta(x) + \dots \end{array}\right) \\
= &  \left( \begin{array}{c} z + \dots +\left[a_{j+1}^\delta + \sum_{n=0}^\infty (a^\delta_{j+1}(0) - a^\delta_{j+1}((b_0^\delta)^{n+1}(x)))\right]z^{j+1} + \dots \\ b_0^\delta(x) + \dots \end{array}\right)
\end{align*}
and
\begin{align*}
& F_{3,j}^\delta \circ (U^\delta_{3,j+1})^{-1} \left( \begin{array}{c} z \\ x \end{array}\right) \\
= & \left( \begin{array}{c} z + \dots +(a_{j+1}^\delta(x) + P_{j+1}(x)) z^{j+1} + \dots \\ b_0^\delta(x) + \dots \end{array}\right) \\
 = & \left( \begin{array}{c} z + \dots + \left[a_{j+1}^\delta(x) + \sum_{n=0}^\infty (a^\delta_{j+1}(0) - a^\delta_{j+1}((b_0^\delta)^{n}(x)))\right] z^{j+1} + \dots  + \dots \\ b_0^\delta(x) + \dots \end{array}\right)
\end{align*}
which are equal if we take $a_{j+1}^\delta:=a_{j+1}^\delta(0)$ and suitable higher order coefficient functions for $F^\delta_{3,j+1}$.
Hence $F_{3,j+1}^\delta = U_{3,j+1}^\delta \circ F_{3,j}^\delta \circ (U_{3,j+1}^\delta)^{-1}.$

Each of the coordinate changes $U_{3,j}$ depends holomorphically on $\delta$ and converges to the identity, since $P_j(x)\equiv 0$ for $\delta=0$ by  \eqref{beta00}.

\subsubsection*{Combining Steps 1 to 3}

We define
$$
U^\delta : =  U^\delta _{3,l}\circ U^\delta_{3,l-1} \circ \dots \circ U^\delta_{3,2}\circ U_2^\delta \circ U_1^\delta.
$$
Then $U^\delta\circ F^\delta \circ (U^\delta)^{-1} = F^\delta_{3,l}$, which is in the form
$$
F^\delta_{3,l}\begin{pmatrix}z\\x \end{pmatrix} = \begin{pmatrix}z+a_2^\delta z^2 + a_3^\delta z^3 + \dots + a_l^\delta z^l +a_{l+1}^\delta(x)z^{l+1}+\dots\\ b_0(x) + b_1^\delta(x)z+ b_2^\delta(x)z^2 + \dots \end{pmatrix}.
$$
We combine equations \eqref{Ueda1}, \eqref{Ueda2} and \eqref{Ueda3} to obtain that
$$
U^\delta \left( \begin{array}{c} z \\ x \end{array}\right)= \left( \begin{array}{c} z + O(x^2, zx) \\ x \end{array}\right).
$$
Combining this with $q(z)=z^2+O(z^3)$ it also follows that $a_2^\delta=1$, independently of $\delta$, by comparing the pure $z^2$ terms of both sides of the equation $F^\delta_{3,l}\circ U^\delta = U^\delta \circ F^\delta$.
This shows that $F^\delta_{3,l}$ is in the desired form \eqref{TildeFdelta}.
In addition, since each of the coordinate changes in Steps 1 to 3 depends holomorphically on $\delta$ and converges to the identity as $\delta\rightarrow 0$, so does the composition $U^\delta$.

\end{proof}

\begin{remark}
It is worth noting that as $\delta\rightarrow 0$ the coordinate changes $U^\delta$ do not necessarily converge to the identity for a general semi-parabolic semi-attracting map with eigenvalues $1$ and $\delta$. It is the fact that the limit map $F^0$ does not depend on $x$ at all which allows us to get this result.
\end{remark}
\begin{remark}
Let us also point out the differences between Ueda's original coordinate changes and our coordinate changes. After mapping the strong stable manifold to the $x$-plane, Ueda uses Koenigs Theorem to linearize the action of the map $T$ on this plane. However, as $\delta\rightarrow 0$, the situation changes from an attracting fixed point to a superattracing one, where the linearization is generally not possible. Since we do not want to exclude the case $\delta=0$, we cannot use this change of coordinates. As a result we also made minor changes in Step 2 and 3, where Ueda used of the fact that the action on the $x$-plane had already been linearized.
Moreover, after Step 3, Ueda shows that for any fixed integer $j$ there is another change of coordinates such that the coefficient functions $b_1^\delta(x), b_2^\delta(x), \dots b_j^\delta(x)$ in \eqref{TildeFdelta} can be chosen to be linear monomials, that is $b_1^\delta(x)=b_1^\delta  x, b_2^\delta(x)=b_2^\delta  x, \dots$. Again, this change of coordinates does not depend holomorphically on $\delta$, so we do not use it.
\end{remark}

\subsection{Degeneration of Fatou coordinates}

We have seen in the previous section that the polynomial automorphism
$$
F^\delta \left( \begin{array}{c} z \\ x \end{array}\right) = \left( \begin{array}{c} z + q(z+ \delta x) \\ \delta x - q(z+ \delta x)\end{array}\right),
$$
can locally be brought to the form
\begin{align}\label{TildeFdelta2}
\widetilde{F}^\delta\begin{pmatrix}z\\x \end{pmatrix} = \begin{pmatrix}z+ z^2 + a_3^\delta z^3 + a_4^\delta(x)z^4 +\dots  \\ b_0(x) + b_1^\delta(x)z+ b_2^\delta(x)z^2 +\dots \end{pmatrix},
\end{align}
with $b_0^\delta(x)=\delta x + O(x^2)$.
In addition we have
\begin{align}\label{F0}
\pi_z\circ \widetilde{F}^0 (z,x) = \pi_z\circ F^0(z,x) = z+q(z) = f(z)
\end{align}
by the property (U3) in Proposition \ref{PropFlocal}.

The following proposition shows that the two-dimensional Fatou coordinates of $F^\delta$ degenerate holomorphically to the one-dimensional Fatou coordinates of $f$ as $\delta\rightarrow 0$.

\begin{prop}\label{PropFC}
There exist families of holomorphic maps $\Phi_{F^\delta}:\mathcal{B}_{F^\delta}\rightarrow \mathbb{C}$ and \linebreak $\Psi_{F^\delta}: \mathbb C \rightarrow \mathbb{C}^2$, defined for $|\delta| <1$, satisfying the functional equations
$$
\Phi_{F^\delta}\circ F^\delta = T_1 \circ \Phi_{F^\delta}
$$
and
$$
F^\delta \circ \Psi_{F^\delta} = \Psi_{F^\delta} \circ T_1,
$$
where $T_1(Z)=Z+1$.
The coordinates $\Phi_{F^\delta}$ and $\Psi_{F^\delta}$ depend holomorphically on $\delta$, and the degenerate coordinates $\Phi_{F^0}, \Psi_{F^0}$ coincide with the one-dimensional coordinates, that is
$$
\Phi_{F^0}(z,x)=\phi_f(z) \;\; \text{and} \; \; \pi_z \circ \Psi_{F^0}(Z)=\psi_f(Z).
$$
\end{prop}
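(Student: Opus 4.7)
The strategy is to construct the Fatou coordinates for the normalized form $\widetilde F^\delta$ from Proposition \ref{PropFlocal}, and then pull back by the coordinate change $U^\delta$. Since $U^\delta$ depends holomorphically on $\delta$ for $|\delta|<1$ and $U^0$ is the identity, the claimed properties transfer directly once they are established for $\widetilde F^\delta$. In particular, after normalization it suffices to work with a map whose $\pi_z$-coordinate agrees with $f$ up to order $l$ (for any prescribed $l$) and whose $x$-coordinate is a polynomial in $z$ depending on $x$.

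For the attracting coordinate, I would follow Ueda's construction, taking care to choose the auxiliary data so that it depends holomorphically on $\delta$. After introducing the change of variable $W = -1/z$ together with a logarithmic correction depending on $a_3^\delta$, the $\pi_z$-dynamics of $\widetilde F^\delta$ becomes an approximate translation $W \mapsto W + 1 + O(1/W)$, while on a petal $\mathcal P$ chosen uniformly in $\delta$ the $x$-coordinate is driven into a small neighbourhood of $0$ by the semi-attracting dynamics (for $\delta\neq 0$) or by the fact that $\pi_x \widetilde F^0$ is given by a polynomial vanishing at $z=0$ (for $\delta=0$). Define
$$
\Phi_{\widetilde F^\delta}(z,x) := \lim_{n\to\infty}\bigl[W\bigl(\pi_z (\widetilde F^\delta)^n(z,x)\bigr) - n - c^\delta\log n\bigr],
$$
and prove convergence by the standard telescoping estimate; since each iterate is holomorphic in $(z,x,\delta)$ and the tail is bounded uniformly in $\delta$, the limit is holomorphic in $\delta$ on compact subsets of $\mathcal P$. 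Extend to the full basin by the functional equation $\Phi_{\widetilde F^\delta}(p) := \Phi_{\widetilde F^\delta}((\widetilde F^\delta)^N(p)) - N$ applied on any compact $K \subset \mathcal B_{F^{\delta_0}}$ swept into $\mathcal P$ after $N$ steps; such a $K$ remains in $\mathcal B_{F^\delta}$ for $\delta$ close to $\delta_0$, yielding the required holomorphic dependence.

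For the repelling coordinate, I would construct $\Psi_{\widetilde F^\delta}$ locally on a repelling petal by the inverse version of the same limit, and then extend it globally to all of $\mathbb C$ via $\Psi_{\widetilde F^\delta}(Z + n) := (\widetilde F^\delta)^n \circ \Psi_{\widetilde F^\delta}(Z)$. This extension is legitimate for every $|\delta|<1$, including $\delta=0$, because $F^\delta$ is a polynomial map on all of $\mathbb C^2$; holomorphic dependence on $\delta$ of the local piece propagates to the extension since each step is a composition with the holomorphic family $\widetilde F^\delta$.

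The identification at $\delta=0$ is immediate: by Proposition \ref{PropFlocal}(U3), $U^0$ is the identity, and by \eqref{F0} we have $\pi_z\circ \widetilde F^0(z,x) = f(z)$ independently of $x$, so the limit formulas above collapse to the classical one-dimensional formulas for $\phi_f$ and $\psi_f$. The main obstacle is verifying that Ueda's convergence estimates can be made uniform in $\delta$ down to $\delta=0$: his original argument uses the genuine geometric contraction coming from the eigenvalue $\delta$, and this contraction degenerates as $\delta\to 0$. The normalization of Proposition \ref{PropFlocal} is exactly what allows us to bypass this: in the normalized form the parabolic $z$-dynamics is decoupled from the contracting $x$-direction up to arbitrarily high order, so the estimates depend only on the parabolic behavior in $z$ and pass to the superattracting limit.
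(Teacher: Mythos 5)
Your strategy matches the paper's own argument step for step: construct the Fatou coordinates in Ueda's normalized form $\widetilde F^\delta$ as limits, note the holomorphic dependence on $\delta$, extend by the functional equations, pull back by $U^\delta$, and read the $\delta=0$ collapse off from (U3) and \eqref{F0}. The only nit is a bookkeeping slip worth fixing: once the logarithmic correction with $b=1-a_3^\delta$ is absorbed into the change of variable the remainder is $O(1/W^2)$, not $O(1/W)$, and then the extra $c^\delta\log n$ subtraction is unnecessary --- the log correction in the coordinate and the $\log n$ subtraction are alternative normalizations, not to be used simultaneously.
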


\begin{proof}

Two-dimensional Fatou coordinates have been constructed by Ueda in \cite{Ueda1,Ueda2}.

He starts with a map in the local form
$$
T\begin{pmatrix}z\\x \end{pmatrix} = \begin{pmatrix}z+a_2 z^2 + a_3 z^3 + O(z^4) \\ \delta x + b_1 x z+ O(z^2) \end{pmatrix}.
$$
In order to construct Fatou coordinates, the idea is to introduce a further change of coordinates which leaves $x$ invariant and maps $z$ to
$$
Z:=\omega^{\iota/o}(z):= -\frac{1}{z}-b\log\left(\mp\frac{1}{z}\right) \footnote{This notation is adapted from \cite{BSU}, where the authors use $w^{\iota / o},$ the superscripts $\iota$ and $o$ refering to incoming and outgoing, respectively. Since $w$ is one of the coordinates of our four dimensional mapping, we choose to avoid this letter but use $\omega$ instead.}
$$
where $b=1-a_3$ and we use $\omega^\iota$ on the attracting and $\omega^o$ on the repelling side. In these coordinates $T$ is locally close to a translation by $1$ on the $Z$ coordinate:
\begin{align}\label{Zplus1}
T: Z\mapsto Z+1+O\left(\frac{1}{Z^2}\right),
\end{align}
independently of $x$.
Note that the $\log$-term in $\omega^{\iota/o}$ and the choice of $b=1-a_3$ are used to cancel the terms of order $1/Z$ in \eqref{Zplus1}.

The local attracting Fatou coordinate can then be constructed as the limit
\begin{align}\label{AttrFCDef}
\Phi_{T}(z,x) := \lim_{n\rightarrow\infty}( \omega^\iota \circ \pi_z \circ T^n(z,x) - n)
\end{align}
and in \cite{Ueda2} the repelling Fatou coordinate is constructed as the limit
 \begin{align}
 \Psi_{T}(Z):=\lim_{n\rightarrow\infty}T^n((\omega^o)^{-1} (Z-n) , 0).
\end{align}
The proof that these limits exist relies on \eqref{Zplus1}: the translation by one per iteration cancels with the subtraction of $n$ in \eqref{AttrFCDef} and the infinite sum of remainder terms $O(1/Z^2)$ converges, because $\pi_z\circ T^n(Z, \cdot)\sim n$ as $n\rightarrow\infty$. The situation for the repelling case is slightly more involved, but the idea is the same. In order to obtain the form \eqref{Zplus1} after the coordinate change $\omega^{\iota/o}$, it is sufficient that the $z$-coordinate of $T$ is of the form $z+a_2 z^2 + a_3 z^3 + O(z^4)$. Here $a_2$ and $a_3$ are independend of $x$, but the terms in $O(z^4)$ can initially depend on $x$. However, in the parabolic basin the $x$-coordinate becomes and remains bounded by a constant $\eta>0$ under iteration of $T$. These properties are satisfied by our local form \eqref{TildeFdelta2} as well, and hence its Fatou coordinates can be constructed in the exact same way, that is
$$
\Phi_{\widetilde{F}^\delta}(z,x) := \lim_{n\rightarrow\infty}( \omega^\iota \circ \pi_z \circ (\widetilde{F}^\delta)^n(z,x) - n)
$$
and
 $$
 \Psi_{\widetilde{F}^\delta}(Z):=\lim_{n\rightarrow\infty}(\widetilde{F}^\delta)^n((\omega^o)^{-1} (Z-n) , 0).
 $$
As the local map $\widetilde{F}^\delta$ varies holomorphically with $\delta$, so do the attracting and repelling Fatou coordinates.
Also note that the construction of one-dimensional Fatou coordinates can be seen as a special case of these constructions. Indeed, for a one-dimensional function $f$ the Fatou coordinates $\varphi_f$ and $\psi_f$ can be constructed as
$$
\varphi_{f}(z) := \lim_{n\rightarrow\infty}( \omega^\iota \circ f^n(z) - n)
$$
and
 $$
 \psi_{f}(Z):=\lim_{n\rightarrow\infty}f^n((\omega^o)^{-1} (Z-n)).
 $$
It follows from \eqref{F0} that
\begin{align}\label{Phi0}
\Phi_{\widetilde{F}^0}(z,x)=\phi_f(z)
\end{align}
and
\begin{align}\label{Psi0}
 \pi_z\circ \Psi_{\widetilde{F}^0}(Z)=\psi_f(Z).
\end{align}

Going back from the local form $\widetilde{F}^\delta$ to the original map $F^\delta$ we obtain attracting and repelling Fatou coordinates
$$\
\Phi_{F^\delta}=\Phi_{\widetilde{F}^\delta}\circ U^\delta \qquad \text{and} \qquad \Psi_{F^\delta}= (U^\delta)^{-1} \circ \Psi_{\widetilde{F}^\delta}
$$
for $F^\delta$ and these can be extended to the attracting basin and to $\mathbb{C}$, respectively, by using the functional equations.
Since both the local Fatou coordinates $\Phi_{\widetilde{F}^\delta}$, $\Psi_{\widetilde{F}^\delta}$ and our coordinate changes $U^\delta$ depend holomorphically on $\delta$ for $|\delta|<1$, so do the Fatou coordinates $\Phi_{F^\delta}$ and $\Psi_{F^\delta}$. Moreover, since $U^0$ is the identity, equations \eqref{Phi0} and \eqref{Psi0} also hold in the original coordinates where $\tilde{F^0}$ is replaced by $F^0$.
\end{proof}

\subsection{Proof of Proposition B'}

By Proposition \ref{PropFC} the maps
$$
\Psi_{F^\delta} \circ \Phi_{F^\delta} : \mathcal{B}_{F^\delta} \rightarrow \mathbb C^2
$$
vary holomorphically with $\delta$ and degenerate to $\psi_f\circ \phi_f$ as $\delta \rightarrow 0$. It also follows that $\mathcal{B}_{F^0} = \mathcal{B}_f \times \mathbb C$. Since $\psi_f\circ \phi_f$ is assumed to have an attracting fixed point $\hat{z}_0$, it follows that
$$
\pi_z \circ \Psi_{F^0} \circ \Phi_{F^0}(\hat{z}_0,x) = \hat{z}_0.
$$
Since $\Phi_{F^0}(z,x)$ does not depend on $x$, there exists $\hat{x}_0$ for which
$$
\Psi_{F^0} \circ \Phi_{F^0}(\hat{z}_0, \hat{x}_0) = (\hat{z}_0, \hat{x}_0).
$$
Since $\hat{z}_0$ is attracting we can choose $r>0$ sufficiently small such that the cylinder $D(\hat{z}_0,r)\times \mathbb C$ is mapped strictly inside itself by $\Psi_{F^0} \circ \Phi_{F^0}$. Since $\Phi_{F^0}$ is independent of $x$, it follows that for $r^\prime$ sufficiently large $\Psi_{F^0} \circ \Phi_{F^0}$ maps the polydisk \linebreak $D(\hat{z}_0,r)\times  D(\hat{x}_0, r')$ relatively compactly inside itself, hence $\Psi_{F^0} \circ \Phi_{F^0}$ contracts the Kobayashi metric uniformly. It follows that all orbits in $D(\hat{z}_0,r)\times D(\hat{x}_0, r')$ converge to a unique attracting fixed point, which must equal $(\hat{z}_0, \hat{x}_0)$. The fact that $\Psi_{F^\delta} \circ \Phi_{F^\delta}$ depends continuously on $\delta$ implies the existence of attracting fixed points $(\hat{z}_\delta, \hat{x}_\delta)$ for $|\delta|$ sufficiently small. \hfill $\square$

\section{Approximate Fatou coordinates}

In this section we will introduce approximate Fatou coordinates in dimension two. They will be defined exactly as in \cite{ABDPR} for the one-dimensional case. In the first subsection we recall definitions and results thereof.

\subsection{Notation and one-dimensional results}

Define the functions
$$
f_w(z)=f(z)+\frac{\pi^2}{4}w=z+z^2+az^3+O(z^4) + \frac{\pi^2}{4}w
$$
and
$$
g(w)= w - w^2 +O(w^3).
$$
The ``approximate Fatou coordinates'' will be coordinate changes $\phi_w$ such that \linebreak $\phi_{g(w)}\circ f_w \circ \phi_w^{-1}$ is close to a translation.
We start by choosing $r>0$ small enough such that $B_r:=D(r,r) \subset \mathcal{B}_g$ and $g(B_r)\subset B_r$.
We assume that $w\in B_r$ and hence $g^m(w)\rightarrow 0$ as $m\rightarrow\infty$.
By $\sqrt{w}$ we denote the principle value of the square root on $D(r,r)$.
As in \cite{ABDPR}, we fix a real number
$$
\frac{1}{2}<\alpha<\frac{2}{3}
$$
and define for $w\in B_r$
$$
r_w:= |w|^{(1-\alpha)/2} \qquad \text{and} \qquad R_w:= |w|^{-\alpha/2}.
$$
Denote by $\mathcal{R}_w$ the rectangle
$$
\mathcal{R}_w:=\left\{\mathcal{Z}\in\mathbb{C}: \frac{r_w}{10}<\Re(\mathcal Z)<1-\frac{r_w}{10}~\text{and}~-\frac{1}{2}<\Im(\mathcal Z)<\frac{1}{2}\right\},
$$
and let $D^{att}_w$ and $D^{rep}_w$ be the disks
$$
D^{att}_w:=D(R_w, R_w/10) \qquad \text{and} \qquad D^{rep}_w:=D(-R_w,R_w/10).
$$
In \cite{ABDPR}, the question was raised whether there exist invariant parabolic curves $\zeta^\pm:B_r\rightarrow \mathbb{C}$ such that $\zeta^\pm(w)\rightarrow 0$ as $w\rightarrow 0$ and such that $f_w\circ\zeta^\pm(w)=\zeta^\pm\circ g(w)$.
Quite recently in \cite{LHR} the existence of such parabolic curves $\zeta^\pm$ has been proved.
For our work here it will not matter whether we define $\zeta^\pm$ to be these invariant parabolic curves or the approximate invariant curves defined in \cite{ABDPR}.
All we require is that they satisfy
$$
\zeta^\pm(w)\sim \pm\frac{\pi i}{2} \sqrt{w} + O(w).
$$

As in \cite{ABDPR}, let $\psi_w:\mathbb{C}\rightarrow\mathbb{P}^1(\mathbb{C})\backslash\{\zeta^+(w),\zeta^-(w)\}$ denote the universal cover given by
$$\psi_w(\mathcal{Z}):=\frac{\zeta^-(w)\cdot e^{2\pi i \mathcal{Z}}-\zeta^+(w)}{e^{2\pi i \mathcal{Z}} -1} = -\frac{\pi}{2} \sqrt{w} \cot(\pi\mathcal{Z}) + O(w).$$
The restriction of this function to the strip
$$
\mathcal{S}_0:=\{\mathcal{Z}\in\mathbb{C}: 0<\Re(Z)<1\}
$$
is univalent, and the inverse is given by
$$
\psi_w^{-1}(z):= \frac{1}{2\pi i} \log \left(\frac{z-\zeta^+(w)}{z-\zeta^-(w)}\right),
$$
where $\log(\cdot)$ is the branch of the logarithm defined on $\mathbb{C}\backslash\mathbb{R}^+$ for which $\log(-1)=\pi i$.
Let $\chi_w:\mathcal{S}_0\rightarrow \mathbb{C}$ be defined by
$$\chi_w(\mathcal{Z}):=\mathcal{Z} - \frac{\sqrt{w}(1-a)}{2}\log\left(\frac{2\sin(\pi\mathcal{Z})}{\pi\sqrt{w}}\right),$$
where this time $\log(\cdot)$ is the branch of the logarithm defined on $\frac{1}{\sqrt{w}}(\mathbb{C}\backslash\mathbb{R}^-)$ for which $\log(1)=0$.
Define the set
$$\mathcal{S}_w:=\{\mathcal{Z}\in\mathbb{C}: |w|^{1/4}<\Re(\mathcal{Z})<1-|w|^{1/4}\}$$
and its image under $\psi_w$:
$$V_w:=\psi_w(\mathcal{S}_w).$$
With these definitions the approximate Fatou coordinates were introduced in \cite{ABDPR} as follows:
\begin{definition}
The approximate Fatou coordinates $\varphi_w$ are the maps
$$\varphi_w:= \chi_w\circ\psi_w^{-1}: V_w \rightarrow \mathbb{C}, \qquad w\in B_r.$$
\end{definition}
A large amount of the work in \cite{ABDPR} consists of proving that the approximate Fatou coordinates satisfy the following three properties.
\begin{prop1}
As $w\rightarrow 0$ in $B_r$,
$$
D^{att}_w \subset \phi_f(V_w\cap P^{att}_f) \qquad \text{and} \qquad \sup_{Z\in D^{att}_w} \left|\frac{2}{\sqrt{w}} \varphi_w \circ \phi_f^{-1}(Z) - Z \right| \rightarrow 0.
$$
\end{prop1}
\begin{prop2}
As $w\rightarrow 0$ in $B_r$,
$$
1+\frac{\sqrt{w}}{2} D^{rep}_w \subset \varphi_w(V_w\cap P^{rep}_f)
$$
and
$$
\sup_{Z\in D^{rep}_w} \left|\psi_f^{-1}\circ\varphi_w^{-1}\left(1+\frac{\sqrt{w}}{2} Z\right) - Z \right| \rightarrow 0.
$$
\end{prop2}
Here $P^{att}_f$ and $P^{rep}_f$ are small sets close to the origin on the attracting and repelling side, respectively. We will define them precisely together with their two dimensional equivalents in the next section.
Properties 1 and 2 assert that $\frac{2}{\sqrt{w}}\varphi_w$ is in a sense close both to the attracting and to the repelling Fatou coordinate.
The third property shows that the action of $f_w$ in the approximate Fatou coordinates is close to a translation.
\begin{prop3}
As $w\rightarrow 0$ in $B_r$,
$$
\mathcal{R}_w\subset\varphi_w(V_w), \qquad f_w\circ\varphi_w^{-1}(\mathcal{R}_w)\subset V_{g(w)}
$$
and
$$
\sup_{\mathcal{Z}\in\mathcal{R}_w} \left| \varphi_{g(w)}\circ f_w \circ \varphi_w^{-1}(\mathcal{Z}) - \mathcal{Z} - \frac{\sqrt{w}}{2}\right| = o(w).
$$
\end{prop3}
These properties make up the main part of the proof of Proposition A.
We note that they hold for any functions $f$ and $g$ of the forms $f(z)=z+z^2+a z^3 + O(z^4)$ and $g(w)=w-w^2+O(w^3)$. In particular we can take $a=a^\delta_3$, the coefficient in \eqref{TildeFdelta2}, and we will implicitly use this choice throughout this section.

\subsection{Two-dimensional setting}

We will introduce the $2$-dimensional analogues of the statements discussed in the previous subsection. Instead of repeating the proofs of Properties 1, 2 and 3 we will often be able to estimate the difference between one- and two-dimensional maps in order to deduce our results from the one-dimensional analogs.

We consider the maps
$$
F\left( \begin{array}{c} z \\ x \end{array}\right) = \left( \begin{array}{c} z + q_1(z+ \delta x) \\ \delta x - q_1(z+ \delta x)\end{array}\right) \; \;
\mathrm{and} \; \;
G\left( \begin{array}{c} w \\ y \end{array}\right) = \left( \begin{array}{c} w + q_2(w+ \delta y)\\ \delta y - q_2(w+ \delta y)\end{array}\right),
$$
with $q_1(z)=z^2+az^3$ and $q_2(w)=-w^2+O(w^3)$.
This time, we denote by $D_r$ the polydisk $D_r:=D(r,r)\times D(0,r)$ with $r$ small enough that $G(D_r)\subset D_r$. We take $(w,y)\in D_r$ so that $G^m(w,y)\rightarrow(0,0)$ as $m\rightarrow \infty$.
We will use the local coordinates introduced earlier:
\begin{align}\label{Flocal}
\widetilde{F} \left( \begin{array}{c} z \\ x \end{array}\right) := U\circ F\circ U^{-1} \left( \begin{array}{c} z \\ x \end{array}\right)=  \left( \begin{array}{c} z+z^2+a_3 z^3 + O(z^4) +\dots\\ b_0(x) + b_1(x) z + b_2(x) z^2 + \dots \end{array}\right),
\end{align}
where $b_0(x)=\delta x + O(x^2)$ as in equation \eqref{TildeFdelta2}.
Note that we have dropped the superscript $\delta$ for both $\widetilde{F}$ and $F$ as opposed to earlier sections, because we now take a fixed $\delta$, with $0<|\delta|<\delta_0$ small enough. The condition that $\delta$ is small will be used in Lemma \ref{LemmaOmega}.

Let us introduce some further notation.
 The attracting petal $P^{att}_{\widetilde{F}}$ is defined by
$$
P^{att}_{\widetilde{F}}:= \{(z,x)\in\mathbb{C}^2: \Re\left(-\frac{1}{z}\right)>R, |x|<\eta\},
$$
where $R>0$ is large and $\eta>0$ small. In particular we choose these constants so that the coordinate changes from the previous section and the map $\widetilde{F}$ are defined on $P^{att}_{\widetilde{F}}$.
We define the one-dimensional attracting petal for $f$ to be the projection of $P^{att}_{\widetilde{F}}$ to the $z$-plane, that is
$$
P^{att}_f:=\{z\in\mathbb{C}: \Re\left(-\frac{1}{z}\right)>R\}.
$$
Finally, we define the attracting petal $P^{att}_F$ for the original map $F$ by
$$
P^{att}_F := U^{-1}(P^{att}_{\widetilde{F}}).
$$
Note that in contrast to the one-dimensional case the attracting Fatou coordinate $\Phi_F$ is not injective, as a map from the attracting petal $P^{att}_F\subset\mathbb{C}^2$ into $\mathbb{C}$.
The repelling petals are defined as the images of left-half planes under the respective repelling Fatou coordinate, that is
$$
P^{rep}_f:=\psi_f(\{Z\in\mathbb{C}: \Re(Z)<-R^\prime\})\subset\mathbb{C},
$$
and
$$
 P^{rep}_{\widetilde{F}}:=\Psi_{\widetilde{F}}(\{Z\in\mathbb{C}: \Re(Z)<-R\})\subset\mathbb{C}^2,
$$
where we choose $R^\prime$ large enough compared to $R$ so that $P^{rep}_f \subset \pi_z(P^{rep}_{\widetilde{F}})$. We remark that this is possible by equation \eqref{RepFatCoordProp} below, and its one-dimensional equivalent.

Both the one- and two-dimensional repelling Fatou coordinates are injective on the above half-planes when $R$ and $R^\prime$ are large enough.
However, note that $P^{rep}_{\widetilde{F}}$ is a one-dimensional subset of $\mathbb{C}^2$, in fact it is locally a graph over the $z$-plane.

We define the repelling petal $P^{rep}_F$ for the original map $F$ by
$$
P^{rep}_F := U^{-1}(P^{rep}_{\widetilde{F}}).
$$
We choose $R, R^\prime$ large enough and $\eta$ small enough such that all attracting petals are forward invariant and all repelling petals are backward invariant under iteration of the corresponding maps.

We note that the two-dimensional attracting and repelling Fatou coordinates for $\widetilde{F}^\delta$ satisfy similar estimates as in one dimension (see also \cite{BSU}).  As $P^{att}_{\widetilde{F}^\delta}\ni(z,x) \rightarrow 0$ we have
\begin{align}\label{AttrFatCoordProp}
\Phi_{\widetilde{F}^\delta}(z,x)=-\frac{1}{z}-b\log\left(-\frac{1}{z}\right)+o(1).
\end{align}
For the repelling coordinate we need to be careful, since $\Psi_{\widetilde{F}^\delta}^{-1}$ is only defined on the one-dimensional set $P^{rep}_{\widetilde{F}^\delta}$. We obtain that for $P^{rep}_{\widetilde{F}^\delta} \ni (z,x) \rightarrow 0$ we have
\begin{align}\label{RepFatCoordProp}
(\Psi_{\widetilde{F}^\delta})^{-1}(z,x) = -\frac{1}{z}-b\log\left(\frac{1}{z}\right)+o(1).
\end{align}

Let us define the perturbed polynomial $F_w:\mathbb{C}^2\rightarrow\mathbb{C}^2$ by
\begin{align}\label{Fw}
F_w \left( \begin{array}{c} z \\ x \end{array}\right) = F\left( \begin{array}{c} z \\ x \end{array}\right) + \left(\begin{array}{c} \frac{\pi^2}{4}w \\ 0 \end{array}\right)=  \left( \begin{array}{c} z + q_1(z+ \delta x) \\ \delta x - q_1(z+ \delta x)\end{array}\right) + \left( \begin{array}{c} \frac{\pi^2}{4} w \\ 0 \end{array}\right),
\end{align}
The following lemma shows how the perturbation $F_w$ of $F$ translates into a perturbation $\widetilde{F}_w$ of the local form $\widetilde{F}$.
\begin{lemma}\label{LemmaFwlocal}
Let $F_w$ be the perturbed Henon map as in \eqref{Fw}, and let $U$ be the local coordinate change that brings $F_0$ to the form $\widetilde{F}$ as in \eqref{Flocal}. Then
\begin{align}\label{TildeFw}
\widetilde{F}_w:=U \circ F_w \circ U^{-1}\begin{pmatrix}z\\x \end{pmatrix} = \widetilde{F}\begin{pmatrix}z\\x \end{pmatrix} + \begin{pmatrix}\frac{\pi^2}{4}w+ \mathcal{O}(wx, wz^2)\\0 \end{pmatrix}.
\end{align}
\end{lemma}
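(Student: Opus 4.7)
The plan is to exploit two features of the setup: by property (U1) the coordinate change $U$ preserves the second coordinate and has the form $U(z,x) = (z + u(z,x), x)$ with $u(z,x) = O(x^2, zx)$, so every monomial of $u$ carries at least one factor of $x$; and the perturbation $F_w - F$ affects only the first coordinate, by the additive constant $\tfrac{\pi^2}{4} w$. Since $U^{-1}$ likewise preserves the second coordinate, the second component of $\widetilde{F}_w$ automatically coincides with that of $\widetilde{F}$, and the entire correction must sit in the first slot.

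Writing $F \circ U^{-1}(z,x) = (Z(z,x), Y(z,x))$, a direct unwinding of the definitions gives
$$\widetilde{F}_w\begin{pmatrix}z\\x\end{pmatrix} - \widetilde{F}\begin{pmatrix}z\\x\end{pmatrix} = \begin{pmatrix}\tfrac{\pi^2}{4}w + \bigl[u\bigl(Z+\tfrac{\pi^2}{4}w,Y\bigr) - u(Z,Y)\bigr] \\ 0\end{pmatrix}.$$
Taylor expanding the bracketed term in its first argument and using that every $\partial_z^k u$ with $k \geq 1$ remains divisible by $x$ (because $u$ itself is), the difference $u(Z+\tfrac{\pi^2}{4}w, Y) - u(Z,Y)$ factors as $w \cdot Y$ times a function of $(w,Z,Y)$ that is bounded near the origin; in particular it is $O(wY)$.

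The last step is to check that $Y = O(x, z^2)$. Writing $U^{-1}(z,x) = (z + v(z,x), x)$ with $v(z,x) = O(x^2, zx)$, and recalling that the second component of $F$ is $\delta x - q_1(z + \delta x)$ with $q_1(s) = s^2 + O(s^3)$, direct inspection shows that the part of $Y$ linear in $(z,x)$ equals $\delta x$, while the remainder is a power series in $(z,x)$ of order $\geq 2$, hence $O(x^2, zx, z^2) = O(x, z^2)$. Combining with the previous paragraph yields the bound $O(wx, wz^2)$ claimed in the lemma.

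The only minor subtlety, and certainly not a real obstacle, is collapsing the higher-order-in-$w$ Taylor terms into the single $O(wx, wz^2)$ estimate: each such term carries at least one factor of $Y$ (from the divisibility of $u$ by $x$) together with an additional power of $w$, so for $|w|$ small they are absorbed into the leading $O(wY)$ contribution without producing any separate $O(w^k)$ error.
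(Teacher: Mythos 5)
Your proof is correct and follows the same route as the paper: compose $F_w\circ U^{-1}$, observe the perturbation only shifts the first coordinate by $\tfrac{\pi^2}{4}w$, and then apply $U$ using property (U1). The paper's own proof stops at "combining these two equations gives the desired result," whereas you make explicit the two points that justify the asserted error order---that every monomial of $u$ carries a factor of $x$, so the Taylor remainder $u(Z+\tfrac{\pi^2}{4}w,Y)-u(Z,Y)=O(wY)$, and that $Y=\delta x+O(z^2,zx,x^2)=O(x,z^2)$---which is exactly the content the paper leaves implicit.
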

\begin{proof}
One easily computes
$$
F_w \circ U^{-1}  \left( \begin{array}{c} z \\ x \end{array}\right) = F \circ U^{-1}\left( \begin{array}{c} z \\ x \end{array}\right) + \left( \begin{array}{c} \frac{\pi^2}{4}w \\ 0 \end{array}\right) = \left(\begin{array}{c} z + \frac{\pi^2}{4} w\\ \delta x \end{array}\right) + O(z^2, zx, x^2).
$$
By the property (U1) of Ueda's change of coordinates, we have
$$
U \left( \begin{array}{c} z \\ x \end{array}\right)  = \left( \begin{array}{c} z + O(x^2, zx) \\ x \end{array}\right).
$$
Combining these two equations for $\widetilde{F}_w= U \circ F_w \circ U^{-1}$ gives the desired result.
\end{proof}

In order to estimate $x$ in terms of $z$, we introduce a family of domains $\Omega_w$ that are forward invariant in a neighborhood of the origin.

\begin{lemma}\label{LemmaOmega}
Let $\widetilde{F}_w$ be a map as in \eqref{TildeFw} with $\delta<\delta_0$ small. Let $w_n=\pi_w\circ G^n(w,y)$ for $(w,y)\in D_r$.
Let $\Delta^2_s$ be the polydisk in $\mathbb{C}^2$ of radius $s$ with center at the origin.
There exist constants $C>1$ and $s>0$, such that for the sets $\Omega_{w_n}$ defined by
$$
\Omega_{w_n}:=\{(z,x)\in\mathbb{C}^2: |x|<C\max\{|z|^2, |w_n|^2\}\}
$$
and $n$ large enough we have
$$
\widetilde{F}_{w_n}(\Omega_{w_n}\cap \Delta^2_s) \subseteq \Omega_{w_{n+1}}.
$$
\end{lemma}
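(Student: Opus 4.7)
The plan is to verify forward invariance directly, by bounding $|x'|$ above and $M':=\max\{|z'|^2,|w_{n+1}|^2\}$ below in terms of $M:=\max\{|z|^2,|w_n|^2\}$, where $(z',x') = \widetilde F_{w_n}(z,x)$ and $(z,x)\in\Omega_{w_n}\cap\Delta^2_s$. The local form \eqref{Flocal} and Lemma~\ref{LemmaFwlocal} provide the explicit expressions.

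A key preliminary observation is that $b_1(0)=0$ in the expansion of the second coordinate in \eqref{TildeFdelta2}. This holds because the coordinate change $U$ is tangent to the identity by property (U1) of Proposition~\ref{PropFlocal}, so $D\widetilde F(0,0) = DF(0,0) = \mathrm{diag}(1,\delta)$, and $b_1(0)$ is precisely the vanishing $(2,1)$-entry $\partial_z\widetilde F_2(0,0)$. Consequently $b_1(x)=O(x)$, and since the $w_n$-perturbation in Lemma~\ref{LemmaFwlocal} affects only the first coordinate,
$$|x'|\le |\delta|\,|x| + K|z|^2 + O(|x|^2 + |xz| + |z|^3),$$
with $K:=|b_2(0)|$. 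Substituting $|x|\le CM$, $|z|^2\le M$, and $|z|,|x|\le s$, every remainder term is absorbed into $o_s(1)\cdot M$ (for $C$ fixed, as $s\to 0$), yielding $|x'|\le\bigl(|\delta|\,C + K + o_s(1)\bigr)M$.

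For the lower bound on $M'/M$, I would split via a threshold $\beta\in(0,1)$. When $M=|w_n|^2$, the parabolic dynamics of $g$ give $|w_{n+1}|^2\ge(1-o(1))|w_n|^2=(1-o(1))M$ as $n\to\infty$. When $M=|z|^2$, two sub-cases arise: if $|z'|\ge\beta|z|$, then $M'\ge\beta^2 M$; otherwise, the expansion $z'=z(1+z)+\tfrac{\pi^2}{4}w_n+O(s)|z|$ combined with the triangle inequality forces $|z|\le\tfrac{\pi^2}{4(1-\beta)}|w_n|(1+O(s))$, whence $M'\ge|w_{n+1}|^2\ge(1-o(1))\bigl(\tfrac{4(1-\beta)}{\pi^2}\bigr)^{\!2}M$. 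Balancing the two sub-case bounds by $\beta=4/(4+\pi^2)$ produces a universal lower bound $M'/M\ge\alpha-o(1)$ with $\alpha=16/(4+\pi^2)^2>0$.

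Since $|\delta|<\delta_0<1/(4\pi^4)$ is well below $\alpha$, I would finally choose $C$ so large that $|\delta|+K/C<\alpha$, then take $s$ small and $n$ large enough to absorb the $o(1)$ errors; this yields $|x'|\le\alpha CM\le CM'$, i.e.\ $(z',x')\in\Omega_{w_{n+1}}$. The main obstacle is the intermediate regime $|z|\sim|w_n|$, where the $\tfrac{\pi^2}{4}w_n$ perturbation can nearly cancel $z$ and drive $|z'|$ to zero --- this is precisely why the $|w_{n+1}|^2$ term in the definition of $\Omega_{w_{n+1}}$ is essential and why the numerical constant $\alpha$ unavoidably involves the factor $16/\pi^4$.
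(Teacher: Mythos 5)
Your proposal is correct and follows essentially the same strategy as the paper's proof: write $(z',x')=\widetilde F_{w_n}(z,x)$, estimate $|x'|\le|\delta||x|+O(z^2,\dots)$ and bound $\max\{|z'|^2,|w_{n+1}|^2\}$ from below by a fixed fraction of $\max\{|z|^2,|w_n|^2\}$ via a split on the relative sizes of $|z|$ and $|w_n|$, choosing $s$ small, $C$ large and $n$ large in that order. The paper hard-codes the threshold $|z|=\pi^2|w_n|$ (yielding the stated $\delta_0<1/(4\pi^4)$), while you introduce a free parameter $\beta$ and optimize it, obtaining the weaker requirement $|\delta|<16/(4+\pi^2)^2$; you also make explicit the observation $b_1(0)=0$ that the paper uses only implicitly when writing $x'=\delta x+O(z^2,zx,x^2)$. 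One small slip: the optimized constant is $16/(4+\pi^2)^2$, not "$16/\pi^4$" as in your closing remark, though this does not affect the argument.
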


\begin{proof}

Let $(z^\prime,x^\prime):=\widetilde{F}_{w_n}(z,x)$ for $(z,x)\in\Omega_{w_n}\cap \Delta^2_s$. By \eqref{Flocal} and \eqref{TildeFw}, we have
$$
z^\prime= z + \frac{\pi^2}{4}w_n + O (z^2, w_nx, w_n z^2) =  z + \frac{\pi^2}{4}w_n + O (z^2, w_nx)
$$
and
$$
x^\prime= \delta x + O(z^2, zx, x^2).
$$
Hence for $(z,x)\in\Delta^2_s$ with $s>0$ small enough and for $w_n$ small enough, there exists a constant $M$ such that
$$
|z^\prime|\geq |z| - \frac{\pi^2}{4}|w_n| - M (|z|^2 + |w_n x|)
$$
and
$$
|x^\prime|\leq |\delta||x|+M(|z|^2+|zx|+|x|^2).
$$
If necessary, we can choose $s$ even smaller so that
\begin{align}\label{eqn1571}
|z|<\frac{1}{16M}\qquad \text{and}\qquad |x|<\frac{1}{16M}.
\end{align}
Take $0<\delta_0<\frac{1}{4\pi^4}$ and $C>0$ very large, so that for $|\delta|<\delta_0$
\begin{align*}
\max\left\{\left(4|\delta|+\frac{1}{2}\right)C + 4M,~ 4|\delta|\pi^4C+12M\pi^4\right\}<C.
\end{align*}
Finally, for $n$ big enough, we have
\begin{align}\label{eqn1572}
|w_n|<\frac{1}{\pi^2 C}
\end{align}
and
\begin{align}\label{eqn1573}
|w_n|\leq2 |w_{n+1}|.
\end{align}
Now we distinguish two cases.\\

First, assume $\pi^2|w_n|<|z|$.
Then $|x|<C|z|^2$ by assumption and combining with \eqref{eqn1571} and  \eqref{eqn1572} we see that
$$
|z^\prime| \geq |z| - \frac{\pi^2}{4}|w_n| - M (|z^2| + C|w_n||z|^2) \geq |z|-|z|/4 - |z|/4 = |z|/2,
$$
hence
\begin{align*}
|x^\prime|&\leq \left(|\delta|+ \frac{1}{16}\right)|x| + M (|z|^2+|zx|)\\
&\leq  \left(|\delta|+\frac{1}{16}\right)C|z|^2+M (|z|^2+C|z|^3)\\
&\leq\left(\left(|\delta|+\frac{1}{8}\right)C+M\right)|z|^2\\
&\leq\left(\left(4|\delta|+\frac{1}{2}\right)C + 4M\right)|z^\prime|^2\\
&<C|z^\prime|^2,
\end{align*}
where we have used the estimate on $x$ from \eqref{eqn1571} for the first line and the estimate on $z$ from \eqref{eqn1571} to get from the second to the third line.\\

On the other hand, if $|z|\leq\pi^2 |w_n|$, then $|x|<\pi^4C|w_n|^2$ and hence
\begin{align*}
|x^\prime|&\leq  |\delta|\pi^4C|w_n|^2+M (\pi^4|w_n|^2+ \pi^6C|w_n|^3+\pi^8C^2|w_n|^4)\\
&\leq(|\delta|\pi^4 C+ 3M\pi^4)|w_n|^2\\
&\leq(4|\delta|\pi^4C+12M\pi^4)|w_{n+1}|^2\\
&<C|w_{n+1}|^2.
\end{align*}
This time we have used \eqref{eqn1572} to get from the first to the second line and \eqref{eqn1573} from the second to the third line.\\

In both cases $(z^\prime, x^\prime)\in\Omega_{w_{n+1}}$ which finishes the proof.
\end{proof}

We prove a similar but simpler statement for the $\left(\begin{array}{c} w\\y \end{array} \right)$-coordinates.

\begin{lemma}\label{LemmaOmegaG}
Let $G$ be the map
$$G\left( \begin{array}{c} w \\ y \end{array}\right) = \left( \begin{array}{c} w + q_2(w+ \delta y)\\ \delta y - q_2(w+ \delta y)\end{array}\right),$$
where $q_2(w)=-w^2+O(w^3)$ and $|\delta|<\delta_0$ small enough. Given a compact $C_G \Subset \mathcal{B}_G$ there exists a constant $C>0$ such that
\begin{align}\label{ylesswsquared}
|y_n|\leq C|w_n^2|,
\end{align}
for $n$ large enough, where $(w_n, y_n)=G^n(w,y)$ for $(w,y)\in C_G\Subset\mathcal{B}_G$.
\end{lemma}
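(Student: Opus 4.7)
The strategy parallels Lemma \ref{LemmaOmega}, but is simpler because $G$ is autonomous in $(w,y)$. Expanding $q_2(u) = -u^2 + O(u^3)$ gives the recursion
\begin{align*}
w_{n+1} &= w_n - (w_n + \delta y_n)^2 + O((w_n + \delta y_n)^3),\\
y_{n+1} &= \delta y_n + (w_n + \delta y_n)^2 + O((w_n + \delta y_n)^3).
\end{align*}
By compactness of $C_G \Subset \mathcal{B}_G$ the iterates converge uniformly to the origin, so fix $N_0$ and a small $\epsilon > 0$ with $|w_n|, |y_n| < \epsilon$ for $n \geq N_0$ and $(w,y) \in C_G$. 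Standard parabolic asymptotics for the $w$-coordinate (inherited from the one-dimensional $g$ up to the small perturbation by $\delta y_n$) give $|w_n| \sim 1/n$ and $|w_n|^2/|w_{n+1}|^2 \to 1$, uniformly on $C_G$.

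I would first establish forward invariance of the set $\Omega := \{(w,y) : |y| \leq C|w|^2\}$ for appropriately large $C$. If $|y_n| \leq C|w_n|^2$ and $|w_n|$ is small, then $|\delta y_n| = O(|w_n|^2) \ll |w_n|$, hence $(w_n + \delta y_n)^2 = w_n^2(1 + O(|w_n|))$, and the recursion yields
$$|y_{n+1}| \leq (|\delta|C + 2)|w_n|^2 \leq (1+\eta)(|\delta|C + 2)|w_{n+1}|^2$$
for $n$ large and $\eta > 0$ arbitrarily small. Choosing $C$ large enough that $(1+\eta)(|\delta|C + 2) \leq C$ (which is possible since $|\delta| < \delta_0$) gives $|y_{n+1}| \leq C|w_{n+1}|^2$.

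For the second step, I would show that the orbit enters $\Omega$ by iterating the universal bound $|y_{n+1}| \leq |\delta|(1 + O(\epsilon))|y_n| + 5|w_n|^2$, obtained from $|w_n + \delta y_n|^2 \leq 2|w_n|^2 + 2|\delta|^2|y_n|^2$ by absorbing the $|y_n|^2$ into the leading term. This gives
$$|y_n| \leq C_1 |\delta|^{n-N_0}|y_{N_0}| + 5 \sum_{k=N_0}^{n-1} |\delta|^{n-1-k}|w_k|^2.$$
The first term is $o(|w_n|^2)$ uniformly, since $|y_{N_0}|$ is bounded on $C_G$ and $n^2|\delta|^n \to 0$. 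For the sum, splitting at $k = n - M\log n$ bounds the near-$n$ part via $|w_k| \leq 2|w_n|$ (from the parabolic decay) together with the geometric series, giving $O(|w_n|^2)$, while the far part is at most $n\epsilon^2 \cdot n^{-M\log(1/|\delta|)}$, which is $o(|w_n|^2)$ for $M$ large. Thus $|y_n| \leq C|w_n|^2$ for $n$ large enough, uniformly in $(w,y) \in C_G$. The main obstacle is this quantitative balancing of the geometric decay of $|y_n|$ against the polynomial decay $|w_n|^2 \sim 1/n^2$, specifically controlling the convolution sum above; once this is in hand, the invariance step finishes the argument.
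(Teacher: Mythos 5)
Your forward-invariance step is essentially the same calculation the paper makes (it simply transplants the argument of Lemma~\ref{LemmaOmega} with the $\tfrac{\pi^2}{4}w$-forcing removed), and that part is sound. The place where you diverge from the paper — the ``entry'' step showing the orbit eventually lands in $\Omega$ — is both substantially harder than it needs to be and, as written, circular. Your convolution estimate leans on the asymptotics $|w_n|\sim 1/n$ (and the consequence $|w_k|\le 2|w_n|$ for $k\ge n-M\log n$), but in original coordinates that decay rate is exactly what is \emph{derived} later from this lemma: the proof of Proposition~\ref{propos2} establishes \eqref{wn} only after invoking Lemma~\ref{LemmaOmegaG} to control the $O(wy,y^2)$ correction terms in $U^{-1}$. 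Without the lemma in hand, one only knows $\widetilde{w}_n\sim 1/n$ in Ueda's local coordinates, and passing to the original $w_n=\widetilde{w}_n+O(\widetilde{w}_n\widetilde{y}_n,\widetilde{y}_n^2)$ requires precisely the $\widetilde{y}_n=O(\widetilde{w}_n^2)$ bound you are trying to prove.

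The paper sidesteps the entry problem entirely with a compactness argument at a \emph{finite} time: since $G^n(C_G)\to(0,0)$ uniformly, pick $n_0$ with $G^{n_0}(C_G)\subset\Delta_s^2$; the set $G^{n_0}(C_G)$ is compact, disjoint from the origin and from the strong stable manifold (because $C_G\Subset\mathcal{B}_G$, and the parabolic basin excludes the strong stable manifold), so the ratio $|\widetilde{y}|/|\widetilde{w}|^2$ is bounded there by some $C'$. One simply enlarges the constant $C$ in $\Omega$ to be $\ge C'$, so that $G^{n_0}(C_G)\subset\Omega\cap\Delta_s^2$, and forward invariance does the rest. No asymptotics in $n$ are needed at all. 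I'd also suggest running the argument in the local coordinates $\widetilde{G}=U\circ G\circ U^{-1}$, as the paper does, rather than the original ones: there the strong stable manifold is the coordinate axis $\{\widetilde{w}=0\}$, making the ``bounded ratio on a compact set'' statement transparent and ensuring the invariance calculation parallels Lemma~\ref{LemmaOmega} verbatim. Your estimate then transfers back to original coordinates because $U$ fixes the $y$-coordinate and has identity linear part.
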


\begin{proof}
Using the local form $\widetilde{G}=U\circ G\circ U^{-1}$, we can repeat the proof of Lemma \ref{LemmaOmega} replacing $(z,x)$ by $(w,y)$ and omitting the perturbations that the additive term $\frac{\pi^2}{4}w$ has previously inflicted upon $\widetilde{F}$. This gives a domain $\widetilde{\Omega}$, forward invariant under $\widetilde{G}$ (within a small polydisk centered at the origin), on which $\widetilde{y}=O(\widetilde{w}^2)$ for $(\widetilde{w}, \widetilde{y})=U(w,y)$.

Since $U$ has linear part equal to the identity, going back to the original coordinates, we obtain a domain $\Omega=U^{-1}(\widetilde{\Omega})$ on which  we have $|y|< C|w|^2$ and $G(\Omega\cap \Delta^2_s)\subset \Omega$ for some $s>0$.

Since the iterates of $G$ converge to $(0,0)$, we have $G^{n}(C_G)\in\Delta^2_s$ for all $n\geq n_0$ large enough. Since $G^{n_0}(C_G)$ is compact and bounded away from $(0,0)$, there is a constant $C^\prime$ such that $|y|<C^\prime|w|^2$ on $G^{n_0}(C_G)$. If necessary we increase the constant $C$ in the definition of $\Omega$ to be at least as large as $C^\prime$. Then $(w_{n_0}, y_{n_0})\in\Omega\cap\Delta^2_s$ and the result follows.
\end{proof}

We define the function $\vartheta_w: (\mathbb{C}\backslash\{\zeta^+(w),\zeta^-(w)\})\times\mathbb{C}\rightarrow\mathbb{C}$ by
$$
\vartheta_w(z,x) := \frac{1}{2\pi i} \log \left(\frac{z-\zeta^+(w)}{z-\zeta^-(w)}\right).
$$
We note that the restriction of $\vartheta_w$ to the $z$-plane is exactly the function $\psi^{-1}_w$ from \cite{ABDPR}. We want to use a different notation, since $\vartheta^{-1}_w$, which would correspond to $\psi_w$, is not well-defined. However, since $\vartheta_w$ only depends on the $z$-coordinate, there is a well-defined z-coordinate of $\vartheta_w^{-1}$, that is we may define an inverse map by $\vartheta_w^{-1}(z)=(\psi_w(z), \gamma(z))$ for any given graph $\gamma(z)$ over the $z$-plane.

\begin{definition}
The approximate Fatou coordinates $\Phi_w$ are the maps
$$\Phi_w:= \chi_w\circ\vartheta_w: V_w\times\mathbb{C} \rightarrow \mathbb{C}.$$
\end{definition}

We note that the approximate Fatou coordinate $\Phi_w$ should not be confused with $\Phi_{\widetilde{F}}$ which is the (non-approximate) attracting Fatou coordinate of $F$.

Now we can state the two-dimensional versions of Properties 1, 2 and 3. The formulation of Property 1 is slightly different from the one-dimensional analogue, since the inverse of the attracting Fatou coordinate $\Phi_{\widetilde{F}}$ is not well-defined. Instead we will consider points in the preimage of the sets $D^{att}_w$ under this map.
\begin{prop12}
Within the set $\Omega_w\cap P^{att}_{\widetilde{F}}$ we have
$$
\Phi_{\widetilde{F}}^{-1}(D^{att}_w)\subset V_w\times\mathbb{C}
$$
and moreover
$$
 \sup_{(z,x)\in \Phi_{\widetilde{F}}^{-1}(D^{att}_w)} \left|\frac{2}{\sqrt{w}} \Phi_w (z,x) - \Phi_{\widetilde{F}}(z,x) \right| \rightarrow 0,
$$
as $(w,y)\rightarrow (0,0)$ in $D_r$.
\end{prop12}
\begin{prop22}
As $(w,y)\rightarrow (0,0)$ in $D_r$,
$$
1+\frac{\sqrt{w}}{2} D^{rep}_w \subset \Phi_w((V_w\times\mathbb{C}) \cap P^{rep}_{\widetilde{F}})
$$
and
$$
\sup_{Z\in D^{rep}_w} \left|\Psi_{\widetilde{F}}^{-1}\circ\Phi_w^{-1}\left(1+\frac{\sqrt{w}}{2} Z\right) - Z \right| \rightarrow 0.
$$
\end{prop22}
Here we note that $\Phi_w^{-1}(Z)$ is a-priori not well-defined. However, it has a well-defined $z$-coordinate given by the one-dimensional inverse $z=\varphi_w^{-1}(Z)$. Using the first assertion of the statement, we can add a well-defined $x$-coordinate by choosing it in such a way that $\Phi_w^{-1}(Z)=(z,x)\in P^{rep}_{\widetilde{F}}$ for $Z\in(1+\frac{\sqrt{w}}{2} D^{rep}_w)$ (recall that locally $P^{rep}_{\widetilde{F}}$ is a graph over the $z$-plane). With this definition also $\Psi_{\widetilde{F}}^{-1}$ will be well-defined on $\Phi_w^{-1}(1+\frac{\sqrt{w}}{2} D^{rep}_w)$.
\begin{prop32}
Let $w_m=\pi_w\circ {G}^m(w,y)$, such that $w_m\rightarrow 0 $ as $m\rightarrow\infty$. Then
$$
\mathcal{R}_{w_m}\subset\Phi_{w_m}(V_{w_m}\times\mathbb{C}), \qquad \widetilde{F}_{w_m}(\Phi_{w_m}^{-1}(\mathcal{R}_{w_m}) \cap \Omega_{w_m})\subset (V_{w_{m+1}}\times\mathbb{C})\cap \Omega_{w_{m+1}}
$$
and
$$
\sup_{(z,x)\in\Phi_{w_m}^{-1}(\mathcal{R}_{w_m})\cap\Omega_{w_m}} \left| \Phi_{w_{m+1}}\circ \widetilde{F}_{w_m} (z,x) - \Phi_{w_m}(z,x) - \frac{\sqrt{w_m}}{2}\right| = o(w_m),
$$
as $w_m\rightarrow 0$.
\end{prop32}

\subsection{Error estimates in dimension two}

This section is devoted to the proofs of Properties 1', 2' and 3'.
While Properties 1' and 2' follow quickly from the one-dimensional results, more precise error estimates are needed for Property 3'.

\medskip
\noindent {\bf Proof of Property 1':}

For $\mathcal{Z}\in\mathcal{S}_w$ we have
$$
\cot(\pi\mathcal{Z})=O\left(\frac{1}{|w|^{\frac{1}{4}}}\right).
$$
Since $\psi_w(\mathcal{Z})\sim -\frac{\pi}{2} \sqrt{w} \cot(\pi\mathcal{Z}) + O(w)$, we infer for $z\in V_w=\psi_w(\mathcal{S}_w)$ that
$|z|=O(|w|^{\frac{1}{4}}).$
In particular
$$
\sup_{z\in V_w} |z| \rightarrow 0 \qquad \text{as}~ w\rightarrow 0 .
$$
It follows that $(z,x)\in(V_w\times\mathbb{C}) \cap \Omega_w \cap P^{att}_{\widetilde{F}}$ converges to $(0,0)$ as $w\rightarrow 0$.
In this case we get from \eqref{AttrFatCoordProp} that
$$\Phi_{\widetilde{F}}(z,x)=-\frac{1}{z}-b\log\left(-\frac{1}{z}\right)+o(1).$$
From the corresponding statement for the one-dimensional attracting Fatou coordinate we conclude that
\begin{align}\label{AttrFCclose}
\Phi_{\widetilde{F}}(z,x) - \phi_f (z) = o(1), \qquad \text{as}~w\rightarrow 0.
\end{align}
Here our choice of $f(z)=z+z^2+az^3+O(z^4)$ with $a=a^\delta_3$ is important in order to have the same $b=1-a$ in both the one-dimensional and the two-dimensional statement.

We note that the one-dimensional Property 1 holds for the strictly larger disk \linebreak $\mathbf{D}^{att}_w=D(R_w, R_w/9)$ instead of $D^{att}_w=D(R_w, R_w/10)$, so that
$$
\mathbf{D}^{att}_w \subset \phi_f(V_w\cap P^{att}_f)
$$
still holds. It follows from \eqref{AttrFCclose} that $\Phi_{\widetilde{F}}((V_w\times\mathbb{C}) \cap \Omega_w \cap P^{att}_{\widetilde{F}})$ approximates $\phi_f(V_w\cap P^{att}_f)$. Since $D^{att}_w\subset \mathbf{D}^{att}_w$ and the boundaries have a distance of order $R_w\rightarrow \infty$ as $w\rightarrow 0$ it follows that $D^{att}_w$ is compactly contained in $\Phi_{\widetilde{F}}((V_w\times\mathbb{C}) \cap \Omega_w \cap P^{att}_{\widetilde{F}})$.
Since each level set $\{\Phi_{\widetilde{F}} = \mathcal{Z}\}$ intersects $P^{att}_F\cap\Omega_w$ in at most one component, a nearly vertical holomorphic disk, it follows that
$$\Phi_{\widetilde{F}}^{-1}(D^{att}_w)\subset V_w\times\mathbb{C} $$
in $P^{att}_{\widetilde{F}}\cap\Omega_w$.
The second assertion follows from \eqref{AttrFCclose} by noticing that the second assertion of the one-dimensional Property 1 is equivalent to
$$
\sup_{z\in\phi_f^{-1}(D^{att}_w)} \left|\frac{2}{\sqrt{w}} \varphi_w (z) - \phi_f(z) \right| \rightarrow 0.
$$
and that by construction we have $\varphi_w(z)=\Phi_w(z,x)$.

\hfill $\square$

\medskip
\noindent {\bf Proof of Property 2':}
Recall that we have chosen $P^{rep}_f$ small enough so that \linebreak $P^{rep}_f \subset \pi_z(P^{rep}_{\widetilde{F}}).$
With the first assertion of the one-dimensional Property 2 and the observation $\varphi_w(z)=\Phi_w(z,x)$, we have
$$
1+\frac{\sqrt{w}}{2} D^{rep}_w \subset \varphi_w(V_w\cap P^{rep}_f) \subset \Phi_w((V_w\times\mathbb{C})\cap P^{rep}_{\widetilde{F}}).
$$
This shows that we can make $\Phi_w^{-1}(Z)$ a well-defined function on the set $1+\frac{\sqrt{w}}{2} D^{rep}_w$ in such a way that $\pi_z \circ \Phi_w^{-1}(Z) = \varphi_w^{-1}(Z)$ and $\Phi_w^{-1}(Z)=(z,x)\in P^{rep}_{\widetilde{F}}$ as discussed earlier.
It follows that
$$
\Phi_w^{-1}(1+\frac{\sqrt{w}}{2} D^{rep}_w)\subset(V_w\times\mathbb{C})\cap P^{rep}_{\widetilde{F}}.
$$
For $(z,x)$ in this set, $\Psi^{-1}_{\widetilde{F}}(z,x)$ is well-defined and $(z,x)$ converges to $(0,0)$ by similar reasoning as in the proof of Property 1'.
Hence by \eqref{RepFatCoordProp} and the corresponding one-dimensional result we get
$$
\Psi^{-1}_{\widetilde{F}}(z,x) - \psi^{-1}_f (z) = o(1), \qquad \text{as}~w\rightarrow 0
$$
and the result follows from the one-dimensional Property 2.

\hfill $\square$

\medskip
\noindent {\bf Proof of Property 3':}

The property $\mathcal{R}_{w_m}\subset\Phi_{w_m}(V_{w_m}\times\mathbb{C})$ follows directly from the one-dimensional equivalent, since $\Phi_w(z,x)=\varphi_w(z)$.

By the one-dimensional Property 3 we have
$$
\sup_{z\in\varphi_{w_m}^{-1}(\mathcal{R}_{w_m})} \left| \varphi_{g(w_m)}\circ f_{w_m} (z) - \varphi_{w_m}(z) - \frac{\sqrt{w_m}}{2}\right| = o(w_m).
$$
Note that $\Phi_{w_m}(z,x)=\varphi_{w_m}(z)$ and hence
\begin{align}\label{samesupset}
\varphi_{w_m}^{-1}(\mathcal{R}_{w_m})= \pi_z(\Phi_{w_m}^{-1}(\mathcal{R}_{w_m})) = \pi_z(\Phi_{w_m}^{-1}(\mathcal{R}_{w_m})\cap\Omega_{w_m}).
\end{align}
From here on within this proof, a supremum is always taken over the set \linebreak $\Phi_{w_m}^{-1}(\mathcal{R}_{w_m})\cap\Omega_{w_m}$, unless noted differently.
It follows from \eqref{samesupset} that if we can estimate the difference
\begin{align*}
&\sup   | \Phi_{w_{m+1}}\circ \widetilde{F}_{w_m} (z,x) - \varphi_{{g(w_m)}} \circ f_{w_m}(z) |
\end{align*}
by terms of order $o(w_m)$, then we obtain the required estimate
\begin{align}\label{ApproxTranslProp}
\sup  \left| \Phi_{w_{m+1}}\circ \widetilde{F}_{w_m} (z,x) - \Phi_{w_m}(z,x) - \frac{\sqrt{w_m}}{2}\right| = o(w_m).
\end{align}
Note that $w_{m+1}=\pi_w\circ G(w_m, y_m)$ is generally not equal to $g(w_m)$. Using
$$
\Phi_{w_{m+1}}\circ\widetilde{F}_{w_m}(z,x)=\varphi_{w_{m+1}}(\pi_z\circ\widetilde{F}_{w_m}(z,x))
$$
we estimate
\begin{align}\nonumber
&\sup   | \Phi_{w_{m+1}}\circ \widetilde{F}_{w_m} (z,x) - \varphi_{g(w_m)} \circ f_{w_m} |\\
\leq &  \sup  \left|\frac{\partial\varphi_w}{\partial z}(z)\right||\pi_z\circ \widetilde{F}_{w_m}(z,x) - f_{w_m}(z)| + \sup  \left|\frac{\partial\varphi_w}{\partial w}(z)\right| |w_{m+1} - g(w_m)|, \label{SupEstimate}
\end{align}
where in the last line the supremum is also to be taken over $w$ on the interval that joins $w_{m+1}$ and $g(w_m)$.
We will prove that each of the two terms in the last line is of order $o(w_m)$.

Recall that $\varphi_w(z) = \chi_w(  \psi_w^{-1}(z))$.
By the chain rule
$$
\frac{\partial\varphi_w}{\partial z}(z) = \frac{\partial\chi_w}{\partial \mathcal{Z}}(\psi_w^{-1}(z))\frac{\partial\psi_w^{-1}}{\partial z}(z)
$$
and
$$
\frac{\partial\varphi_w}{\partial w}(z)= \frac{\partial\chi_w}{\partial w}(\psi_w^{-1}(z)) + \frac{\partial\chi_w}{\partial \mathcal{Z}}(\psi_w^{-1}(z)) \frac{\partial\psi_w^{-1}}{\partial w}(z).
$$

We will now estimate the first term in \eqref{SupEstimate}. We have
$$
\sup_{\mathcal{Z}\in\mathcal{S}_w} |\frac{\partial \chi_w}{\partial \mathcal{Z}}(\mathcal{Z})| = 1+ O(|w|^{1/4}) < 2
$$
(see the proof of Lemma 2.2 in \cite{ABDPR}).

Hence it would be enough to estimate that
$$
\sup  \left|\frac{\partial\psi_w^{-1}}{\partial z}(z)\right||\pi_z\circ \widetilde{F}_{w_m}(z,x) - f_{w_m}(z)| = o(w_m).
$$

We compute
\begin{align*}
\frac{\partial\psi_w^{-1}}{\partial z}(z)
&= \frac{1}{2\pi i}\frac{z-\zeta^-(w)}{z-\zeta^+(w)}\frac{(z-\zeta^-(w)) - (z-\zeta^+(w))}{(z-\zeta^-(w))^2}\\
& = \frac{1}{2\pi i} \frac{\zeta^+(w)-\zeta^-(w)}{(z-\zeta^+(w))(z-\zeta^-(w))}.
\end{align*}

Also note that by \eqref{TildeFw} and Lemma \ref{LemmaOmega} for $(z,x)\in\Omega_{w_m}$ we have
\begin{align}\label{Ffdistance}
|\pi_z\circ \widetilde{F}_{w_m}(z,x)-f_{w_m}(z)| = O(z^4, {w_m}x, w_m z^2) = O(z^4, {w_m}z^2, {w_m}^3).
\end{align}
We distinguish two cases.

\noindent {\bf Case 1:} $|z|>C|\sqrt{{w_m}}|$ for a large constant $C>0$.

Since $\zeta^{\pm}(w)\sim\pm \frac{\pi i}{2}\sqrt{w}$ we obtain
\begin{align}\label{EstimateCase1}
\left|\frac{\partial\psi_w^{-1}}{\partial z}(z)\right | = O(\sqrt{{w_m}}/z^2).
\end{align}
Hence
$$
\sup   \left|\frac{\partial\psi_w^{-1}}{\partial z}(z) \right|  |\pi_z\circ \widetilde{F}_{w_m}(z,x)-f_{w_m}(z)|= O(\sqrt{{w_m}}z^2, {w_m}^{3/2}, w_m^{7/2}/z^2) =o({w_m}),
$$
using that $z=o(|{w_m}|^{1/4})$ on $ \Phi_{{w_m}}^{-1}(\mathcal{R}_{{w_m}})$ for the first term (this follows from the last line in the proof of Property 3, Step 1 \cite[p.282]{ABDPR}) and
$z>C|\sqrt{w_m}|$ for the third term.

\noindent {\bf Case 2:} $|z|<C\sqrt{{w_m}}$.

Here we need to estimate the terms $(z-\zeta^\pm(w))$ appearing in the denominator of $\frac{\partial\psi_w^{-1}}{\partial z}$ for $z\in \varphi_w^{-1}(\mathcal{R}_w)$. We define $\mathcal{R}^\prime_w$ by
$$
\mathcal{R}^\prime_w:=\left\{\mathcal{Z}\in\mathbb{C}: \frac{r_w}{20}<\Re(\mathcal Z)<1-\frac{r_w}{20}~\text{and}~-1<\Im(\mathcal Z)<1\right\}.
$$
Moreover, set
$$
\mathcal{Q}_w:=\chi^{-1}_w(\mathcal{R}_w).
$$
It is also shown in \cite[p.282]{ABDPR} (the same proof as refered to above) that
\begin{align}\label{QSubsetR}
\mathcal{Q}_w\subset\mathcal{R}^\prime_w,
\end{align}
due to the fact that $\chi_w(\mathcal{Z})=\mathcal{Z}+o(r_w)$ is close enough to the identity.
The parts of the boundary of $\psi_w(\mathcal{R}^\prime_w)$ that are closest to $\zeta^\pm(w)$ are the curves
$$
\psi_w(\{z\in\mathbb{C}: \frac{r_w}{20}<\Re(\mathcal Z)<1-\frac{r_w}{20}~\text{and}~ \Im(\mathcal{Z})= \pm 1\}).
$$
Put $\mathcal{Z}=x\pm i$ with $\frac{r_w}{20}<x<1-\frac{r_w}{20}$. Then
\begin{align*}
\psi_w(x\pm i) &= \frac{\zeta^- (w) \cdot e^{2\pi i x} e^{\mp 2 \pi} - \zeta^+ (w)}{e^{2\pi ix} e^{\mp 2\pi} - 1} \\
& = \mp\frac{e^{2\pi i x} e^{\mp 2 \pi} +1}{e^{2\pi i x} e^{\mp 2 \pi} - 1}\zeta^\pm(w) + o(w)\\
\end{align*}
using that $\zeta^+(w)=-\zeta^-(w) +o(w)$.
We see that the fractional expression is bounded away from $1$. Thus
$$
|z-\zeta^\pm(w)| > b  \zeta^\pm(w) + o(w) > c \sqrt w
$$
on the boundary curves and hence on all of $\psi_w(\mathcal{R}^\prime_w)$ for some $b, c>0$ and $w$ small enough.
Since $\varphi^{-1}_w(\mathcal{R}_w)= \psi_w\circ\chi_w^{-1}(\mathcal{R}_w)\subset \psi_w(\mathcal{R}_w^\prime)$, this shows that
\begin{align}\label{EstimateCase2}
\sup  \left|\frac{\partial\psi_w^{-1}}{\partial z} (z) \right| = O(1/\sqrt{w})
\end{align}
and using $z=O(\sqrt{w_m})$ we obtain
$$
\sup   |\frac{\partial\psi_w^{-1}}{\partial z}(z) | |\pi_z\circ \widetilde{F}_{w_m}(z,x)-f_{w_m}(z)|  = O(z^4/\sqrt{w_m},\sqrt{w_m} z^2, w_m^{5/2})=o(w_m).
$$
This finishes the proof that the first term in equation \eqref{SupEstimate} is of order $o(w_m)$.

For the second term we compute
$$
\frac{\partial\psi_w^{-1}}{\partial w}(z) = \frac{1}{2\pi i}\frac{ (\zeta^-)^\prime(w) (z-\zeta^+(w))- (\zeta^+)^\prime(w) (z-\zeta^-(w))}{(z-\zeta^+(w))(z-\zeta^-(w))},
$$
with $(\zeta^\pm)^\prime(w) = O (1/\sqrt{w})$.
We distinguish the same cases and use the same estimates on the terms $(z-\zeta^\pm(w))$ as before to obtain
$$\sup  \left| \frac{\partial\psi_w^{-1}}{\partial w}(z) \right| = O\left(\frac{1}{w}\right).$$

Now
\begin{align*}
\frac{\partial \chi_w}{\partial w} (\mathcal{Z}) &= -\frac{(1-a)}{4\sqrt{w}} \log\left(\frac{2\sin(\pi\mathcal{Z})}{\pi\sqrt{w}}\right)+\frac{\sqrt{w}(1-a)}{2}\frac{\pi\sqrt{w}}{2 \sin(\pi \mathcal{Z})}\frac{2\sin(\pi\mathcal{Z})}{2\pi w^{3/2}}\\
&= O\left(\frac{\log(w)}{\sqrt{w}}\right)+O\left(\frac{1}{\sqrt{w}}\right) = O\left(\frac{1}{w}\right).
\end{align*}

Recall also that $$\sup_{\mathcal{Z}\in\mathcal{S}_w} |\frac{\partial \chi_w}{\partial \mathcal{Z}}(\mathcal{Z})| = O(1).$$

In addition
\begin{align*}
|w_{m+1} - g(w_m)|  &= |\pi_w\circ G(w_m, y_m) - g(w_m)| \\
& = O(w_m^3, w_my_m, y_m^2)\\
&= O(w_m^3)
\end{align*}
by Lemma \ref{LemmaOmegaG}.
Hence
\begin{align*}
&\sup  \left|\frac{\partial\varphi_w}{\partial w}(z)\right| |w_{m+1} - g(w_m)| \\
=& \sup  \left|\frac{\partial\chi_w}{\partial w}(\psi_w^{-1}(z)) + \frac{\partial\chi_w}{\partial \mathcal{Z}}(\psi_w^{-1}(z)) \frac{\partial\psi_w^{-1}}{\partial w}(z)\right|  |w_{m+1} - g(w_m)|  \\
=& o(w_m)
\end{align*}
and we have finished the proof of \eqref{ApproxTranslProp}.

It is left to show that
$$
\widetilde{F}_{w_m}(\Phi_{w_m}^{-1}(\mathcal{R}_{w_m}) \cap \Omega_{w_m})\subset (V_{w_{m+1}}\times\mathbb{C})\cap \Omega_{w_{m+1}}.
$$
By Lemma \ref{LemmaOmega} we have $\widetilde{F}_{w_m}(\Omega_{w_m})\subset \Omega_{w_{m+1}}$.
So it is enough to show that \linebreak $\pi_z\circ \widetilde{F}_{w_m}(\Phi_{w_m}^{-1}(\mathcal{R}_{w_m})\cap\Omega_{w_m} )\subset V_{w_{m+1}}$.
In the proof of the one-dimensional Property 3 in \cite{ABDPR} it is shown that
$$
f_w\circ\varphi_w^{-1}(\mathcal{R}_w)\subset V_{g(w)}.
$$
In fact, it turns out that there is a significant margin between these two sets. Our goal is to show that by replacing the one-dimensional maps $f_w\circ\varphi_w^{-1}$ by $\widetilde{F}_w\circ\Phi_w^{-1}$ we introduce an error that is strictly smaller than this margin.

\subsubsection*{Estimate of the margin}

From \cite[p.286]{ABDPR} we have
$$
f_{w_m}\circ\varphi_{w_m}^{-1}(\mathcal{R}_{w_m})=\psi_{g(w_m)}(\mathcal{F}^1\circ\mathcal{F}^0(\mathcal{Q}_{w_m})),
$$
where $\mathcal{F}^1\circ\mathcal{F}^0(\mathcal{Z})=\mathcal{Z}+O(w^{1/2})$ for $\mathcal{Z}\in\mathcal{Q}_w$ as $w\rightarrow 0$.
From \eqref{QSubsetR} we have that $\mathcal{Q}_w\subset\mathcal{R}^\prime_w$. If necessary, we can replace $r_w/20$ by $r_w/30$ in the definition of $\mathcal{R}^\prime_w$ to ensure that the minimal distance between the boundaries of $\mathcal{Q}_w$ and $\mathcal{R}_w^\prime$ is comparable to $r_w$. Since $|w|^{1/2}=o(r_w)$ we obtain $\mathcal{F}^1\circ\mathcal{F}^0(\mathcal{Q}_{w_m})\subset \mathcal{R}^\prime_{w_m}\subset \mathcal{R}^\prime_{w_{m+1}},$
hence
$$
f_{w_m}\circ\varphi_{w_m}^{-1}(\mathcal{R}_{w_m})\subset \psi_{g(w_m)}(\mathcal{R}_{w_{m+1}}^\prime).
$$
In addition we know that the minimal distance between the boundaries of $\mathcal{R}_w^\prime$ and $\mathcal{S}_w$ is comparable to $|w|^{1/4}$.
By our earlier estimates \eqref{EstimateCase1} and \eqref{EstimateCase2} we have \linebreak
$|\frac{\partial\psi_w^{-1}}{\partial z}| = O(w^{-1/2}).$
It follows that the minimal distance between
$\psi_{g(w_m)}(\mathcal{R}_w^\prime)$ and \linebreak $\psi_{g(w_m)}(\mathcal{S}_{w_{m+1}})$
at least of order $|w_m|^{3/4}$.
Replacing $g(w_m)$ by $w_{m+1}$ does not change this order, because
$|\frac{\partial\psi}{\partial w}|=O(1/|w|^{1/2})$ and $|g(w_m)-w_{m+1}|= O(w_m^3)$.
Therefore we finally obtain that the size of the margin between $f_{w_m}\circ\varphi_{w_m}^{-1}(\mathcal{R}_{w_m})$ and $\psi_{w_{m+1}}(S_{w_{m+1}})=V_{w_{m+1}}$ is comparable to $|w_m|^{3/4}$.

\subsubsection*{Difference between dimension 1 and 2}

Recall from \eqref{Ffdistance} that for $(z,x)\in\Phi_{w_m}^{-1}(\mathcal{R}_{w_m}) \cap \Omega_{w_m}$:
$$|\pi_z\circ \widetilde{F}_{w_m}(z,x)-f_{w_m}(z)| = O(z^4, {w_m}z^2, {w_m}^3) = o (w_m),$$
since $z=o(|w|^{1/4})$ on $\varphi_w^{-1}(\mathcal{R}_w)=\pi_z\Phi_w^{-1}(\mathcal{R}_w)$.

We conclude that the difference between the sets $f_{w_m}(\varphi_{w_m}^{-1}(\mathcal{R}_{w_m}))$ and \linebreak $\pi_z\circ\widetilde{F}_{w_m}( \Phi_{w_m}^{-1}(\mathcal{R}_{w_m})\cap \Omega_{w_m})$ is of order $o(w_m)$, which is smaller than the margin as shown above, completing the proof.

\hfill $\square$

\section{Convergence to the Lavaurs map and proof of Proposition A'}

\subsection{Notation}

Let $C_F$ be a compact subset of $\mathcal{B}_F$ and $C_G$ be a compact subset of $\mathcal{B}_G$. Our goal in this section is to prove that $H^{2n+1}(z,x,G^{n^2}(w,y))$ converges uniformly on $C_F\times C_G$ to the map $(z,x,w,y)\mapsto(\mathcal{L}_F(z,x),0,0)$.
For $(z,x,w,y)\in C_F\times C_G$ we set
$$w_m:=\pi_w(G^m(w,y))$$
and for $m_2\geq m_1\geq 0 $ we set
$$\boldsymbol{F}_{m_2,m_1} := F_{w_{m_2 -1}}\circ\dots\circ F_{w_{m_1}} \qquad \text{with} \qquad F_w(z,x)= F(z,x)+(\frac{\pi^2}{4} w,0).$$
We saw in Lemma \ref{LemmaFwlocal} how $F_w$ translates into a locally defined map $\widetilde{F}_w := U \circ F_w \circ(U)^{-1}$ with
$$
\widetilde{F}_w:=U \circ F_w \circ U^{-1}\begin{pmatrix}z\\x \end{pmatrix} = \widetilde{F}\begin{pmatrix}z\\x \end{pmatrix} + \begin{pmatrix}\frac{\pi^2}{4}w+ \mathcal{O}(wx, wz^2)\\0 \end{pmatrix}.
$$
We define
$$\widetilde{\boldsymbol{F}}_{m_2,m_1} := \widetilde{F}_{w_{m_2 -1}}\circ\dots\circ \widetilde{F}_{w_{m_1}},$$
accordingly.

Let $\Phi_F$ and $\Psi_F$ be the attracting and repelling Fatou coordinates for the map $F$, respectively. They were constructed for the local map $\widetilde{F}=U\circ F\circ U^{-1}$, resulting in local Fatou coordinates $\Phi_{\widetilde{F}}$ and $\Psi_{\widetilde{F}}$. As for the local coordinate change $U$, these maps are only defined in a neighborhood $V$ of the origin. Recall however that we may assume that the attracting and repelling petals are chosen small enough to be contained in $V$. The global Fatou coordinates are defined via $\Phi_F := \Phi_{\widetilde{F}}\circ U$ and $\Psi_F:=U^{-1}\circ\Psi_{\widetilde{F}}$ and are extended to $\mathcal{B}_F$ and $\mathbb{C}$, respectively, by using the functional equations $\Phi_F\circ F = T_1\circ \Phi_F$ and $F\circ \Psi_F = \Psi_F \circ T_1$.

The notation $o(\cdot)$ or $O(\cdot)$ stands for estimates that are uniform on $C_F\times C_G$ and with respect to $k\in[0,2n+1]$, and depend only on $n$. We set 
$$
k_n:=\lfloor n^\alpha\rfloor, \qquad \text{where}\qquad \frac{1}{2}<\alpha<\frac{2}{3}.
$$

\subsection{Proof of Proposition A'}

We will first prove Proposition A' under the assumption that the following analogues of Propositions 3.1. through 3.4. in \cite{ABDPR} hold.

\begin{prop}\label{propos1}
Let $(z,x)\in C_F$. There exists $\kappa_0\geq 1$ such that the following properties hold.
\begin{enumerate}[(i)]
\item The point $(z^\iota_n, x^\iota_n),$ given by
\begin{align}\label{defzin}
(z^\iota_n, x^\iota_n):=\widetilde{\boldsymbol{F}}_{n^2+k_n, n^2+\kappa_0}\circ U \circ \boldsymbol{F}_{n^2+\kappa_0,n^2} (z,x)
\end{align}
 is well-defined for $n$ large enough.
\item We have $z^\iota_n\sim -1/k_n$ and $(z^\iota_n, x^\iota_n)\in\Omega_{w_{n^2+k_n}}\cap \Delta^2_s$ as defined in Lemma \ref{LemmaOmega}.
Moreover,  $$\Phi_{\widetilde{F}}(z^\iota_n,x^\iota_n) = \Phi_F(z,x)+k_n +o(1),$$ as $n\rightarrow \infty$.
\end{enumerate}
\end{prop}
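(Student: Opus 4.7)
The plan is to mirror the proof of Proposition 3.1 in \cite{ABDPR}, tracking the extra $x$-coordinate via the invariant family of domains $\Omega_{w_m}$ established in Lemma \ref{LemmaOmega}. I separate the orbit into three phases: an initial stretch of $\kappa_0$ iterations of $\boldsymbol{F}_{\cdot,n^2}$ that carries $(z,x)\in C_F$ into a small neighborhood of the origin where the local coordinate change $U$ is defined; a single application of $U$; and then $k_n-\kappa_0$ iterations of $\widetilde{\boldsymbol F}_{\cdot,n^2+\kappa_0}$ in local coordinates, where the good asymptotics of the parabolic dynamics and the $\Omega$-invariance apply.

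For part (i), since $C_F\Subset\mathcal B_F$ and $w_m\to 0$, I would first show that $\boldsymbol{F}_{m,n^2}(z,x)$ is a small perturbation of $F^{m-n^2}(z,x)$ on any fixed finite range $m-n^2\in[0,N]$, uniformly in $(z,x)\in C_F$, because the added terms $(\tfrac{\pi^2}{4}w_m,0)$ have norm $O(w_{n^2})=o(1)$ and parabolic orbits of $F$ starting in $C_F$ are uniformly recurrent toward $0$. A compactness argument then yields a $\kappa_0$, independent of $(z,x)$ and (for $n$ large) of $n$, after which $\boldsymbol{F}_{n^2+\kappa_0,n^2}(z,x)$ lies inside the neighborhood where $U$ is defined and, moreover, well inside the attracting petal $P_{\widetilde F}^{att}\cap\Omega_{w_{n^2+\kappa_0}}$. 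Lemma \ref{LemmaOmega} (forward invariance of $\Omega$ under $\widetilde F_{w_m}$) then propagates the orbit through the remaining $k_n-\kappa_0$ iterations, keeping it inside $\Omega_{w_{m}}\cap \Delta^2_s$ and hence well-defined; this gives (i) and the inclusion $(z^\iota_n,x^\iota_n)\in\Omega_{w_{n^2+k_n}}\cap\Delta^2_s$.

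For part (ii), I would control the $z$-asymptotics and the Fatou-coordinate shift in parallel. The one-dimensional parabolic asymptotic $z_k\sim -1/k$ applies to $\pi_z\circ\widetilde F_{w_m}$ up to perturbations of order $O(w_m,x)$, and on $\Omega_{w_m}$ the $x$-coordinate is subordinate ($|x|=O(|z|^2,|w_m|^2)$), so the standard computation with $-1/z_{k+1}+1/z_k=1+o(1)$ still yields $z^\iota_n\sim -1/k_n$. For the Fatou coordinate estimate, I use the functional equations $\Phi_F\circ F=T_1\circ\Phi_F$, $\Phi_F=\Phi_{\widetilde F}\circ U$, and $\Phi_{\widetilde F}\circ\widetilde F=T_1\circ\Phi_{\widetilde F}$. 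Writing $\Phi_F\circ F_{w_m}=T_1\circ\Phi_F+\varepsilon_m$ with $\varepsilon_m=O(w_m)$ on compact subsets of the basin (and analogously for $\widetilde F_{w_m}$ once we are in the local chart), telescoping over the $k_n$ iterations gives a total error $\sum_{m=n^2}^{n^2+k_n-1}O(w_m)$. Since $w_m\sim 1/m$ in the parabolic basin of $g$, this sum is of order $k_n/n^2=n^{\alpha-2}=o(1)$, yielding $\Phi_{\widetilde F}(z^\iota_n,x^\iota_n)=\Phi_F(z,x)+k_n+o(1)$.

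The main obstacle, as in \cite{ABDPR}, is justifying the derivative bound needed to turn the $O(w_m)$ perturbation of $F_{w_m}$ from $F$ into an $O(w_m)$ perturbation of Fatou coordinates that sums to $o(1)$ rather than merely $O(k_n/n^2)$ along the full orbit; one has to be careful that the bound holds uniformly as the iterates approach the parabolic fixed point, where $\nabla\Phi_{\widetilde F}$ blows up. The way to handle this is to note that by $\Phi_F\circ F=T_1\circ\Phi_F$ we can transport the estimate to any level set and thereby control $\nabla\Phi_F$ uniformly on the relevant iterates, then switch to $\widetilde F$-coordinates only after $\kappa_0$ steps (so near the fixed point we work with $\widetilde{F}_{w_m}$, where the perturbation is also $O(w_m)$, and with $\Phi_{\widetilde{F}}$ which, combined with $\Omega$-invariance, gives the required bounds).
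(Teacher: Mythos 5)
Your overall decomposition (initial $\kappa_0$ steps to reach $P^{att}_{\widetilde F}$, then $k_n-\kappa_0$ steps in local coordinates, with $\Omega$-invariance from Lemma \ref{LemmaOmega} to keep the $x$-coordinate subordinate) matches the paper's Steps 1, 2 and 5. But there is a genuine quantitative gap at the heart of part (ii). You claim that $\Phi_F\circ F_{w_m}=T_1\circ\Phi_F+\varepsilon_m$ with $\varepsilon_m=O(w_m)$, telescoping to a total error $\sum O(w_m)=O(k_n/n^2)$. This is wrong: the per-step error in the Fatou coordinate is governed by $\bigl|\partial\Phi_{\widetilde F}/\partial z\bigr|\cdot O(w_m)$, and since the orbit approaches the parabolic fixed point with $|z_k|\asymp 1/k$, we have $\bigl|\partial\Phi_{\widetilde F}/\partial z\bigr|\asymp 1/|z|^2\asymp k^2$, so $\varepsilon_m=O(k_n^2\,w_m)$ and not $O(w_m)$. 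The cumulative error over $k_n$ steps is then $O(k_n^3/n^2)=O(n^{3\alpha-2})$, and this is $o(1)$ \emph{only because} $\alpha<2/3$. This is exactly the computation in the paper's Step 3; the constraint $\alpha<2/3$ in the definition of $k_n$ exists precisely for this estimate, and your outline never invokes it.

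You correctly flag the derivative blow-up as the ``main obstacle'', but the remedy you propose --- ``by $\Phi_F\circ F=T_1\circ\Phi_F$ we can transport the estimate to any level set and thereby control $\nabla\Phi_F$ uniformly on the relevant iterates'' --- does not work. The functional equation does not make $\nabla\Phi_{\widetilde F}$ bounded on the full orbit; differentiating it gives $D\Phi_{\widetilde F}(z,x)=D\Phi_{\widetilde F}(\widetilde F(z,x))\cdot D\widetilde F(z,x)$, which merely shifts the evaluation point along the orbit. The blow-up like $1/z^2$ is real and unavoidable; the proof works by accepting the $O(k_n^2)$ bound on each of the $k_n$ steps and then winning by arithmetic ($k_n^3/n^2\to 0$). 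A second, smaller gap: $\Omega$-invariance (Lemma \ref{LemmaOmega}) controls the $x$-coordinate but does not by itself keep the $z$-coordinate inside $P^{att}_{\widetilde F}$, which is needed for well-definedness of $\widetilde{\boldsymbol F}_{n^2+k,n^2+\kappa_0}$ and for the derivative bound $O(k_n^2)$; the paper's Step 2 establishes this by working in the coordinate $Z=-1/z$, bounding $\mathcal F_{n^2+k}-\mathcal F$ by the maximum principle, and running an inductive disk-containment argument $-1/z_k\in\overline D(-1/z_0+k-\kappa_0,(k-\kappa_0)/10)$. Your proposal has nothing that plays this role.
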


Proposition \ref{propos1} concerns entering the eggbeater and is very similar to the one-dimensional case, as is Proposition \ref{propos2}.

\begin{prop}\label{propos2}
As $n\rightarrow\infty$,
$$2n\left(\sum_{m=n^2+k_n}^{n^2+2n-k_n}\frac{\sqrt{w_m}}{2} \right) = 2n - 2k_n + o(1).$$
\end{prop}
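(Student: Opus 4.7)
The plan is to reduce to the one-dimensional analog, established in the proof of Proposition 3.2 of \cite{ABDPR}, which gives $n\sum_{m=n^2+k_n}^{n^2+2n-k_n}\sqrt{g^m(w)}=2n-2k_n+o(1)$. What remains is then to control the error introduced by replacing the $1$D orbit $g^m(w)$ with $w_m=\pi_w\circ G^m(w,y)$.

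The first step is to establish the bound
$$
\sqrt{w_m}-\sqrt{g^m(w)}=O\!\left(\frac{\log m}{m^{3/2}}\right)
$$
uniformly for $(w,y)\in C_G$. Writing $G$ explicitly and applying Lemma \ref{LemmaOmegaG} to bound $|y_m|\leq C|w_m|^2$, one obtains $w_{m+1}=g(w_m)+O(w_m^3)$. Setting $\delta_m:=w_m-g^m(w)$ then gives the recursion
$$
\delta_{m+1}=g'(g^m(w))\,\delta_m+O(\delta_m^2)+O(1/m^3),
$$
with multiplier $g'(g^m(w))=1-2/m+O(1/m^2)$. Since the homogeneous product $\prod_{k=1}^m(1-2/k)$ decays like $1/m^2$ and the standard parabolic asymptotic gives $g^m(w)=1/m+O(\log m/m^2)$, iterating yields $\delta_m=O(\log m/m^2)$; dividing by $\sqrt{w_m}+\sqrt{g^m(w)}$, which is comparable to $1/\sqrt{m}$, produces the claimed bound.

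The summed error over the $O(n)$ indices $m\in[n^2+k_n,n^2+2n-k_n]$, multiplied by $n$, is $O(\log n/n)=o(1)$. Thus the $2$D sum equals the $1$D sum up to an $o(1)$ correction, and the proposition reduces to the $1$D statement. That statement is a short computation: substitute $g^m(w)=1/m+O(\log m/m^2)$, expand $1/\sqrt{n^2+k}=(1/n)\bigl(1-k/(2n^2)+O(k^2/n^4)\bigr)$ around $k=0$, and sum over $k\in[k_n,2n-k_n]$. The zeroth-order contribution equals $(2n-2k_n+1)/n$, the first-order correction contributes $-1/n+o(1/n)$ after summation (using $k_n/n=n^{\alpha-1}=o(1)$), and all higher-order contributions are $o(1/n)$; multiplying by $n$ then yields $2n-2k_n+o(1)$.

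The delicate point is the first step: each term $\sqrt{w_m}$ has size only $1/n$, so after summing $O(n)$ such terms and multiplying by $n$, any per-term error worse than $o(1/n^2)$ would swamp the claim. Obtaining the $O(\log m/m^2)$ control on $\delta_m$ therefore requires one order of precision beyond the naive bound $O(w_m^3)$ on a single step, but since Lemma \ref{LemmaOmegaG} already extracts the key dimension-specific input $|y_m|=O(w_m^2)$, the rest of the argument is essentially bookkeeping.
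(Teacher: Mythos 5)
Your proof is correct, but it routes through a different intermediate asymptotic than the paper. You compare the orbit $w_m$ directly to the one-dimensional orbit $g^m(w)$, setting $\delta_m := w_m - g^m(w)$ and propagating $\delta_m = O(\log m / m^2)$ through the affine recursion $\delta_{m+1} = g'(g^m(w))\delta_m + O(\delta_m^2) + O(1/m^3)$ (the last term coming from Lemma~\ref{LemmaOmegaG}), and then invoke the already-established sum asymptotic for $g^m(w)$ from \cite{ABDPR}. The paper instead bypasses any comparison with $g^m(w)$: it applies the two-dimensional attracting Fatou coordinate $\Phi_G$ of $G$ together with its asymptotic expansion and the functional equation $\Phi_G \circ G = T_1 \circ \Phi_G$ to obtain $w_{n^2+k} = 1/\bigl(n^2+k+O(\log n)\bigr)$ directly (again using Lemma~\ref{LemmaOmegaG} to discard the $y$-dependence of $\Phi_G$), and then feeds this into the same arithmetic as in \cite{ABDPR}. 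The two asymptotics are of course compatible, and both routes land on per-term errors of size $O(\log n / n^3)$, which after summing $O(n)$ terms and multiplying by $2n$ give the needed $o(1)$. Your approach is more elementary in that it avoids the machinery of two-dimensional Fatou coordinates for $G$, at the cost of the recursion bootstrap; the paper's is shorter given that $\Phi_G$ and its expansion are already available, and stays closer in spirit to the rest of the argument, which is built throughout on Fatou-coordinate asymptotics. One small imprecision in your write-up: with $g^m(w) = 1/m + O(\log m/m^2)$ the multiplier should read $g'(g^m(w)) = 1 - 2/m + O(\log m/m^2)$ rather than $O(1/m^2)$; this correction is absolutely summable, so it does not affect the quadratic decay of the homogeneous product nor the final $\delta_m = O(\log m / m^2)$ bound.
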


\begin{prop}\label{propos3}
Let $(z^\iota_n, x^\iota_n)$ be a sequence such that $z^\iota_n\sim -1/k_n$ and \linebreak $(z^\iota_n, x^\iota_n)\in P^{att}_{\widetilde{F}}\cap\Omega_{w_{n^2+k_n}}\cap \Delta^2_s$ for $n$ large enough.
Set
\begin{align}\label{defzon}
(z^o_n,x^o_n):=\widetilde{\boldsymbol{F}}_{(n+1)^2-k_n, n^2+k_n}(z^\iota_n,x^\iota_n).
\end{align}
Then $z^o_n \sim 1/k_n$ and there exists $u^o_n\in\mathbb{C}$
such that  $(z^o_n,u^o_n)\in P^{rep}_{\widetilde{F}}$
for $n$ large enough.
Moreover,
$$\Psi_{\widetilde{F}}^{-1}(z^o_n,u^o_n) 
= \Phi_{\widetilde{F}}(z^\iota_n,x^\iota_n) - 2k_n +o(1)$$
as $n\rightarrow\infty$.
\end{prop}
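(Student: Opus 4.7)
The plan is to reduce the statement to the one-dimensional analogue (Proposition 3.3 in \cite{ABDPR}) by tracking the orbit of $(z^\iota_n, x^\iota_n)$ through the eggbeater in the two-dimensional approximate Fatou coordinate $\Phi_{w_m}$, applying Properties 1', 2' and 3' to convert between approximate and genuine Fatou coordinates. The forward invariance provided by Lemma \ref{LemmaOmega} keeps the $x$-coordinate sufficiently small along the way.

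For the entry step, the hypothesis $z^\iota_n \sim -1/k_n$ combined with \eqref{AttrFatCoordProp} gives $\Phi_{\widetilde{F}}(z^\iota_n, x^\iota_n) = k_n + O(\log k_n)$, which lies in $D^{att}_{w_{n^2+k_n}}$ because $R_{w_{n^2+k_n}} \sim n^\alpha \sim k_n$. Property 1' then yields
$$
\Phi_{w_{n^2+k_n}}(z^\iota_n, x^\iota_n) = \tfrac{\sqrt{w_{n^2+k_n}}}{2}\Phi_{\widetilde{F}}(z^\iota_n, x^\iota_n) + o(\sqrt{w_{n^2+k_n}}).
$$

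Next I would iterate Property 3' along the orbit. Writing $(z_m, x_m) := \widetilde{\boldsymbol{F}}_{m, n^2+k_n}(z^\iota_n, x^\iota_n)$, Property 3' gives
$$
\Phi_{w_{m+1}}(z_{m+1}, x_{m+1}) - \Phi_{w_m}(z_m, x_m) = \tfrac{\sqrt{w_m}}{2} + o(w_m),
$$
provided $(z_m, x_m) \in \Phi_{w_m}^{-1}(\mathcal{R}_{w_m}) \cap \Omega_{w_m}$. The forward invariance of $\Omega_{w_m}$ is exactly Lemma \ref{LemmaOmega}; staying in the preimage of $\mathcal{R}_{w_m}$ follows as in the one-dimensional case, since $\Phi_{w_m}$ depends only on $z$ and on $\Omega_{w_m}$ the difference $\pi_z\circ\widetilde{F}_{w_m}(z,x) - f_{w_m}(z) = O(z^4, w_m z^2, w_m^3) = o(w_m)$ is much smaller than the margin of order $|w_m|^{3/4}$ identified in the proof of Property 3'. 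Telescoping over the $2n-2k_n+1$ steps and using $\sum_m w_m = O(\log n/n)$ to bound the cumulative error, I obtain
$$
\Phi_{w^*}(z^o_n, x^o_n) = \Phi_{w_{n^2+k_n}}(z^\iota_n, x^\iota_n) + \sum_{m=n^2+k_n}^{n^2+2n-k_n} \tfrac{\sqrt{w_m}}{2} + o(1/n),
$$
where $w^* := w_{(n+1)^2 - k_n}$.

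For the exit, I would define $u^o_n$ as the unique point with $(z^o_n, u^o_n) \in P^{rep}_{\widetilde{F}}$, using that $P^{rep}_{\widetilde{F}}$ is locally a graph over the $z$-plane. Since $\Phi_{w^*}$ depends only on $z$, we have $\Phi_{w^*}(z^o_n, u^o_n) = \Phi_{w^*}(z^o_n, x^o_n)$. The two previous displays combined with Proposition \ref{propos2} give $\Phi_{w^*}(z^o_n, x^o_n) - 1 \sim -k_n/(2n)$, which sits well inside $1 + (\sqrt{w^*}/2)D^{rep}_{w^*}$ since $\sqrt{w^*}R_{w^*}/2 \sim k_n/(2n)$. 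Property 2' then applies and gives
$$
\Psi_{\widetilde{F}}^{-1}(z^o_n, u^o_n) = \tfrac{2}{\sqrt{w^*}}\bigl(\Phi_{w^*}(z^o_n, x^o_n) - 1\bigr) + o(1).
$$
Substituting the previous estimates, invoking Proposition \ref{propos2} in the form $\sum \sqrt{w_m}/2 = 1 - k_n/n + o(1/n)$, and using the standard asymptotic $n\sqrt{w^*} = 1 + o(1/k_n)$ (which follows from $g(w) = w - w^2 + O(w^3)$) to reconcile $\sqrt{w_{n^2+k_n}}$ with $\sqrt{w^*}$, I arrive at
$$
\Psi_{\widetilde{F}}^{-1}(z^o_n, u^o_n) = \Phi_{\widetilde{F}}(z^\iota_n, x^\iota_n) - 2k_n + o(1).
$$
The asymptotic $z^o_n \sim 1/k_n$ follows by inverting \eqref{RepFatCoordProp}.

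The main obstacle I anticipate is controlling the accumulated errors across an orbit of length $\sim 2n$. The per-step error $o(w_m)$ from Property 3' must sum to something strictly below $\sqrt{w^*}$ so that after multiplication by $2/\sqrt{w^*}$ in Property 2' it contributes only $o(1)$; similarly, the mismatch $\sqrt{w_{n^2+k_n}} - \sqrt{w^*}$ must be small enough that, after multiplication by $\Phi_{\widetilde{F}}(z^\iota_n, x^\iota_n) \sim k_n$, it still yields $o(1)$, which is precisely the reason Proposition \ref{propos2} is stated with its $o(1)$ error after the normalization by $2n$. A smaller subtlety is that $x^o_n$ need not equal $u^o_n$, but since $\Phi_{w^*}$ is independent of $x$ this discrepancy is invisible to the translation identity and only matters for uniqueness of the repelling lift.
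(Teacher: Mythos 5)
Your plan matches the paper's proof step for step: enter via Property 1' to relate $\Phi_{v^\iota_n}(z^\iota_n,x^\iota_n)$ to $\Phi_{\widetilde F}(z^\iota_n,x^\iota_n)$, run an induction along the orbit using Property 3' with Lemma \ref{LemmaOmega} maintaining membership in $\Omega_{w_m}$, track the arithmetic to verify the point stays in $\Phi_{w_m}^{-1}(\mathcal R_{w_m})$, then exit via Property 2' with $u^o_n$ chosen on the graph $P^{rep}_{\widetilde F}$, and close with Proposition \ref{propos2} and the asymptotics $\sqrt{v^\iota_n}/2 = 1/(2n)+O(1/n^2)$, $2/\sqrt{v^o_n}=2n+O(1)$. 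The only slip is the claim $\sum_m w_m = O(\log n/n)$: the correct bound is $O(1/n)$ (the sum of $\sim 2n$ terms each $\sim 1/n^2$), and the weaker $O(\log n/n)$ would give only $o(\log n)$ after multiplying by $2/\sqrt{v^o_n}\sim 2n$, which would not close the argument; your stated conclusion $o(1/n)$ is however correct and is what the paper obtains by writing the per-step error as $o(1/n^2)$ and summing over $O(n)$ steps.
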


The proof of Proposition \ref{propos3} (passing trough the eggbeater) is based on the properties 1', 2' and 3' of the approximate Fatou coordinates, introduced in the previous section.

Finally, Proposition \ref{propos4} describes the situation when leaving the eggbeater:

\begin{prop}\label{propos4}
Let $z^o_n, x^o_n$ and $u^o_n$ be the sequences as in Proposition \ref{propos3}. Then there exists $\kappa_1\geq 1$ for which
$$
\widetilde{\boldsymbol{F}}_{(n+1)^2-\kappa_1, (n+1)^2-k_n}(z^o_n,x^o_n) = \widetilde{F}^{k_n-\kappa_1}(z^o_n,u^o_n)+o(1),
$$
and the coordinate change $U^{-1}$ is defined at the point on the left hand side.
\end{prop}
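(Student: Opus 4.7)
The plan is to compare two trajectories in $\mathbb{C}^2$ and track their difference at time $k_n-\kappa_1$. Set $p_j := \widetilde{\boldsymbol{F}}_{(n+1)^2-k_n+j,\,(n+1)^2-k_n}(z^o_n,x^o_n)$ for the perturbed orbit and $q_j := \widetilde{F}^j(z^o_n,u^o_n)$ for the reference orbit. By the functional equation $\widetilde{F}\circ\Psi_{\widetilde{F}}=\Psi_{\widetilde{F}}\circ T_1$ we have $q_j = \Psi_{\widetilde{F}}(Z_0+j)$ with $Z_0 := \Psi_{\widetilde{F}}^{-1}(z^o_n,u^o_n)$, and Proposition~\ref{propos3} gives $\Re(Z_0)\sim -k_n$. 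I would first verify that the initial gap $d_0 := p_0 - q_0$ lies purely in the $x$-direction with $|d_0| = |x^o_n - u^o_n| = O(1/k_n^2)$: $|u^o_n| = O(|z^o_n|^2)$ because $P^{rep}_{\widetilde{F}}$ is a graph tangent to the $x$-plane, and $|x^o_n| = O(|z^o_n|^2)$ because $(z^o_n,x^o_n) \in \Omega_{w_{(n+1)^2-k_n}}$ by forward invariance through the eggbeater (Lemma~\ref{LemmaOmega}). I would also check that both orbits stay inside $\Omega_w\cap\Delta^2_s$ and inside the neighborhood on which $U$ and $\Psi_{\widetilde{F}}$ are defined for all $0\le j\le k_n-\kappa_1$.

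The heart of the argument is a recursive estimate on $d_j := p_j - q_j$. Writing
$$d_{j+1} = D\widetilde{F}(q_j)\,d_j + O(|d_j|^2) + \bigl(\widetilde{F}_{w_m}(p_j)-\widetilde{F}(p_j)\bigr), \qquad m = (n+1)^2-k_n+j,$$
Lemma~\ref{LemmaFwlocal} gives the perturbation first-coordinate size $O(|w_m|)=O(1/n^2)$ and vanishing second coordinate. At $q_j$, equation~\eqref{RepFatCoordProp} gives $z_j\sim 1/(k_n-j)$, so the $(1,1)$ entry of $D\widetilde{F}(q_j)$ is $1+2z_j+O(z_j^2)$ while the $(2,2)$ entry is $\delta + O(z_j)$. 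The $x$-component of $d_j$ is contracted by $|\delta|<1$ with only small contamination from cross-coupling and remains $o(1)$. For the $z$-component, the product estimate $\prod_{l=i+1}^{j-1}(1+2z_l+O(z_l^2)) \sim ((k_n-i)/(k_n-j))^2$ combined with the $O(1/n^2)$ per-step kick yields
$$|d_j^z| \;\le\; \frac{C}{(k_n-j)^2\,n^2}\sum_{i=0}^{j-1}(k_n-i)^2 \;\le\; \frac{C\,k_n^3}{(k_n-j)^2\,n^2}.$$
At $j = k_n-\kappa_1$ this becomes $O(k_n^3/n^2) = O(n^{3\alpha-2}) = o(1)$ precisely because $\alpha<2/3$.

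Once $|d_{k_n-\kappa_1}|=o(1)$ is established, the conclusion follows. The reference endpoint $q_{k_n-\kappa_1} = \Psi_{\widetilde{F}}(Z_0+k_n-\kappa_1)$ has $\Re\sim -\kappa_1$, and choosing $\kappa_1$ sufficiently large places it strictly inside the repelling petal, hence inside the neighborhood where $U^{-1}$ is defined; the same then holds for any $o(1)$-close perturbation of it, which is the perturbed iterate. The main obstacle I foresee is the joint bookkeeping of the propagated error: the parabolic factor $\prod(1+2z_l)$ amplifies each $O(1/n^2)$ kick by a factor of up to $k_n^2$, and the off-diagonal coupling in $D\widetilde{F}$ between $z$ and $x$ (through the $b_1(x)$ term) must be handled so that it neither spoils the contraction in $x$ nor inflates the amplification in $z$. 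The single inequality $\alpha<2/3$ is exactly what makes the accumulated error vanish, and it is invoked only at the very last step.
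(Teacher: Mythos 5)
Your proposal is correct in substance but follows a genuinely different route from the paper. The paper never linearizes $\widetilde{F}$ directly along the reference orbit. Instead it runs an induction I(k)\,--\,IV(k) in which, at every step $k$, the perturbed point is ``projected back'' onto the one-dimensional graph $\widetilde{F}(P^{rep}_{\widetilde{F}})$ (this is the auxiliary sequence $u^o_{n,k}$, characterized by $(z^o_{n,k}, u^o_{n,k}) \in \widetilde{F}(P^{rep}_{\widetilde{F}})$), and the progress is measured in the repelling Fatou coordinate $\Psi_{\widetilde{F}}^{-1}$. Passing to Fatou coordinates converts the parabolic amplification into a unit shift, so the paper never needs the explicit telescoping product $\prod_{l}(1+2z_l+O(z_l^2)) \sim \left((k_n-i)/(k_n-j)\right)^2$ you use; the price is the bookkeeping of the two sequences $x^o_{n,k}$ (the true perturbed orbit) and $u^o_{n,k}$ (the petal projection), and bounds II(k), III(k) that recursively compare them, with the $\delta$-contraction in $x$ feeding into a geometric sum. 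Your ``eigenvalue product'' approach is more elementary and self-contained in ambient coordinates; the paper's is more structural, trading the product estimate for the well-studied Fatou linearization. Both hit the same bottleneck: the accumulated $z$-error is of order $k_n^3/n^2 = n^{3\alpha-2}$, and both close because $\alpha<2/3$; and both use that the perturbation vanishes in the $x$-coordinate (Lemma~\ref{LemmaFwlocal}), so the $x$-error is only seeded by the initial $O(1/k_n^2)$ mismatch and the $O(z)$ cross-coupling.

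Two things you flag in passing should be made explicit before the argument is complete. First, your recursion $d_{j+1} = D\widetilde{F}(q_j)\,d_j + O(|d_j|^2) + (\widetilde{F}_{w_m}-\widetilde{F})(p_j)$ presupposes the Taylor expansion of $\widetilde{F}$ on the segment from $q_j$ to $p_j$, so you need an a priori smallness of $|d_j|$; this is a standard bootstrap (assume $|d_j|\le\varepsilon$ up to $j$, derive a strictly better bound, extend), but it must be run, because your sharp bound $|d_j^z|\lesssim k_n^3/((k_n-j)^2 n^2)$ is only valid once that is in place. Second, you need the entire perturbed orbit to remain in $\Omega_{w_m}\cap\Delta^2_s$ so that Lemma~\ref{LemmaOmega} and the estimates for $\widetilde{F}_w$ on that region apply, and in a neighborhood where $\Psi_{\widetilde{F}}$ is a graph with controlled slope; this is precisely what the paper's IV(k), together with the petal size conditions ($\widetilde{F}(P^{rep}_{\widetilde{F}})$ remaining a graph with derivative $\le 1$), is designed to guarantee, so some version of it is unavoidable in your variant too. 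With those two points filled in, your proof is a valid alternative to the one in the paper.
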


After passing though the eggbeater the iterates will in general not lie inside the repelling petal $P^{rep}_{\widetilde{F}}$, since this is only one-dimensional. Therefore, the proof of Proposition \ref{propos4} will be more complicated than in the one-dimensional case, because we have to show that the errors made by repeatedly ``jumping back into the repelling petal'' are small enough.

Let us now prove Proposition A' based on Propositions \ref{propos1} through \ref{propos4}.

\medskip
\noindent {\bf Proof of Proposition A' :} Let $(z^\iota_n,x^\iota_n)$ and $(z^o_n,x^o_n)$, as well as $u^o_n$ be defined as in Propositions \ref{propos1} and \ref{propos3}. Note that since $G^m(w,y)\rightarrow(0,0)$ for $(w,y)\in C_G\subset\mathcal{B}_G$ as $m\rightarrow\infty$, Proposition A' is equivalent to the statement $\boldsymbol{F}_{(n+1)^2,n^2}(z,x)=\mathcal{L}_F(z,x)+o(1)$ for $(z,x)\in C_F\subset\mathcal{B}_F$ as $n\rightarrow\infty$.
From Proposition \ref{propos1} we obtain that \linebreak $\Phi_{\widetilde{F}}(z^\iota_n,x^\iota_n)=\Phi_F(z,x)+k_n+o(1)$. Combined with the result of Proposition \ref{propos3} this gives
\begin{align}\label{eqnPsiPhi}
\Psi_{\widetilde{F}}^{-1}(z^o_n,u^o_n)=\Phi_F(z,x)-k_n+o(1).
\end{align}
Since the sequence $(F_{w_{(n+1)^2-k}})_{n\geq 0}$ converges to $F$ locally uniformly for $k\in[0,\kappa_1]$ as $n\rightarrow\infty$, we have
$$
\boldsymbol{F}_{(n+1)^2,(n+1)^2-\kappa_1} = F^{\kappa_1} + o(1).
$$
It follows that
$$
\boldsymbol{F}_{(n+1)^2,n^2}(z,x) =  F^{\kappa_1}\circ \boldsymbol{F}_{(n+1)^2-\kappa_1, n^2+\kappa_0} \circ \boldsymbol{F}_{n^2+\kappa_0,n^2}(z,x)+o(1).
$$
Introducing the coordinate change $U$ and using the definitions of $(z^\iota_n, x^\iota_n)$ and $(z^o_n, x^o_n)$ in \eqref{defzin} and \eqref{defzon}, we obtain
\begin{align*}
\boldsymbol{F}_{(n+1)^2,n^2}(z,x) &=  F^{\kappa_1}\circ U^{-1} \circ \widetilde{\boldsymbol{F}}_{(n+1)^2-\kappa_1, n^2+\kappa_0} \circ U \circ \boldsymbol{F}_{n^2+\kappa_0,n^2}(z,x)+o(1)\\
&=F^{\kappa_1}\circ U^{-1} \circ \widetilde{\boldsymbol{F}}_{(n+1)^2-\kappa_1, (n+1)^2-k_n}(z^o_n,x^o_n)+o(1).
\end{align*}
Proposition \ref{propos4} and equation \eqref{eqnPsiPhi} give
\begin{align*}
\boldsymbol{F}_{(n+1)^2,n^2}(z,x) &= F^{\kappa_1}\circ U^{-1} \circ \widetilde{F}^{k_n-\kappa_1}(z^o_n,u^o_n) +o(1)\\
&=F^{\kappa_1}\circ U^{-1} \circ \widetilde{F}^{k_n-\kappa_1}\circ \Psi_{\widetilde{F}}\circ\Psi_{\widetilde{F}}^{-1}(z^o_n,u^o_n) +o(1)\\
&=F^{\kappa_1}\circ U^{-1} \circ \widetilde{F}^{k_n-\kappa_1}\circ \Psi_{\widetilde{F}}(\Phi_F(z,x)-k_n) +o(1)\\
&=F^{\kappa_1}\circ U^{-1} \circ \Psi_{\widetilde{F}}(\Phi_F(z,x)-k_n+k_n-\kappa_1) +o(1)\\
&=F^{\kappa_1}\circ \Psi_{F}(\Phi_F(z,x)-\kappa_1) +o(1)\\
&=\Psi_{F}(\Phi_F(z,x)) +o(1)\\
&=\mathcal{L}_F(z,x)+o(1).
\end{align*} \hfill $\square$

\subsection{Proof of Proposition \ref{propos2}}

Let $\Phi_{\widetilde{G}}:P^{att}_{\widetilde{G}}\rightarrow \mathbb{C}$ denote the attracting Fatou coordinate of $\widetilde{G}=U\circ G\circ U^{-1}$ and $\Phi_G=\Phi_{\widetilde{G}}\circ U^{-1}$ the attracting Fatou coordinate of $G$.
Similar to property \eqref{AttrFatCoordProp}, as $n\rightarrow \infty$, we have
$$
\Phi_{\widetilde{G}}(w_{n^2+k},y_{n^2+k}) = \frac{1}{w_{n^2+k}} + c \log\left(\frac{1}{w_{n^2+k}}\right) + o(1),
$$
for a constant $c$. Note that compared to \eqref{AttrFatCoordProp} the different signs arise due the fact that $g(w)=w-w^2+\dots$  as opposed to $f(z)=z+z^2+\dots$.
Since $U$ has linear part equal to the identity, $U^{-1}$ maps $w$ to $w+O(w^2,wy,y^2)$. The latter is equal to $w+O(w^2)$ for $w$ small enough by Lemma \ref{LemmaOmegaG}.
Therefore
\begin{align*}
\Phi_G(w_{n^2+k},y_{n^2+k}) &= \frac{1}{w_{n^2+k}+O(w_{n^2+k}^2)} + c \log\left(\frac{1}{w_{n^2+k}+O(w_{n^2+k}^2)}\right) + o(1)\\
& = \frac{1}{w_{n^2+k}} + c \log\left(\frac{1}{w_{n^2+k}}\right) + O(1).
\end{align*}
We also have
$$\Phi_G(w_{n^2+k},y_{n^2+k}) = \Phi_G(w,y) + n^2+k = n^2+k + O(1).$$
It follows that
\begin{align}\label{wn}
w_{n^2+k} = \frac{1}{n^2+k+O(\log n)},
\end{align}
for $k\in[k_n, 2n-k_n]$ and from there the result follows exactly as in the proof of Proposition 3.2. in \cite{ABDPR}.
\hfill $\square$

\subsection{Proof of Proposition \ref{propos1}}

This proof is to a large extent similar to the proof of Proposition 3.1 in \cite{ABDPR}.
\subsubsection*{Step 1}
We choose $\kappa_0\geq 1$ large enough so that
$$F^{\kappa_0}(C_F)\subset P^{att}_F.$$
In addition to $(z^\iota_n, x^\iota_n)=\widetilde{\boldsymbol{F}}_{n^2+k_n, n^2+\kappa_0}\circ U \circ \boldsymbol{F}_{n^2+\kappa_0,n^2} (z,x),$ we set
\begin{align}\label{defz0x0}
(z_0,x_0):=U\circ\boldsymbol{F}_{n^2+\kappa_0, n^2}(z,x),
\end{align}
for $(z,x)\in C_F$.
Since for every fixed $k>0$ the sequence of polynomials $(F_{w_{n^2+k}})_{n\geq 0}$ converges to $F$ locally uniformly, we get that for $k\in[1,\kappa_0]$, the sequence $\boldsymbol{F}_{n^2+k, n^2}$ converges uniformly to $F^k$ on $C_F$. If $n$ is large enough, then
$$\boldsymbol{F}_{n^2+k, n^2}(C_F)\subset \mathcal{B}_F \text{~ for ~} k\in[1,\kappa_0], \text{~ and ~} \boldsymbol{F}_{n^2+\kappa_0, n^2}(C_F)\subset P^{att}_F.$$
Recall the definition $P^{att}_F=U^{-1}(P^{att}_{\widetilde{F}})$. It follows that $U\circ\boldsymbol{F}_{n^2+\kappa_0, n^2}(C_F) \subset P^{att}_{\widetilde{F}}$.

Since $\boldsymbol{F}_{n^2+\kappa_0, n^2} $ converges to $F^{\kappa_0}$ as $n\rightarrow\infty$, we have
\begin{align*}
\Phi_{\widetilde{F}}(z_0,x_0) &= \Phi_{\widetilde{F}} \circ U\circ\boldsymbol{F}_{n^2+\kappa_0, n^2}(z,x)\\
  &= \Phi_F \circ \boldsymbol{F}_{n^2+\kappa_0, n^2} (z,x) \\
  &= \Phi_F \circ F^{\kappa_0} (z,x) +o(1)\\
  &= \Phi_F(z,x) + \kappa_0 +o(1)
\end{align*}
and in addition for $n$ large enough
\begin{align}\label{kn10}
k_n>\frac{10}{|z_0|}, \text{~for~} (z,x)\in C_F,
\end{align}
which we will use later on.

\subsubsection*{Step 2}

We show that for $n$ large enough and $k\in[\kappa_0, k_n]$, $\widetilde{\boldsymbol{F}}_{n^2+k, n^2+\kappa_0}\circ U \circ \boldsymbol{F}_{n^2+\kappa_0, n^2}(C_F)\subset P^{att}_{\widetilde{F}}$. In this case, $(z^\iota_n, x^\iota_n)$ is well-defined. We already have $(z_0,x_0)=U \circ \boldsymbol{F}_{n^2+\kappa_0, n^2}(z,x)\in P^{att}_{\widetilde{F}}$ for $(z,x)\in C_F$ by Step 1. Hence it is enough to show that $\widetilde{\boldsymbol{F}}_{n^2+k, n^2+\kappa_0}(z_0,x_0)\in P^{att}_{\widetilde{F}}$.
Recall that the attracting petal is defined by
$$
P^{att}_{\widetilde{F}}:=\{(z,x)\in\mathbb{C}^2: \Re\left(-\frac{1}{z}\right)>R, |x|<\eta\}
$$
and that $x^\prime:=\pi_x\circ \widetilde{F}_w(z,x) = \delta x + O(x^2, z)$. For $\eta$ small enough we see that \linebreak $|x^\prime| < \beta\eta + M/R$ for some $\beta$, $\delta<\beta<1$ and a constant $M>0$. Hence $|x^\prime|<\eta$, if $R$ is large enough. 

To see that also the $z$-coordinate stays in the set $\left\{z\in\mathbb{C}:\Re\left(-\frac{1}{z}\right)>R\right\}$, we work in the coordinate $Z=-1/z, X=x$.
Consider the map
$$
\mathcal{F}(Z, X)= -\frac{1}{\pi_z\circ \widetilde{F}(-1/Z, X)} = Z + 1 + O(1/Z)
$$
and its perturbations
$$
\mathcal{F}_m(Z,X) :=  -\frac{1}{\pi_z \circ \widetilde{F}_{w_m}(-1/Z, X)}.
$$
Using that 
$$
\pi_z\circ \widetilde{F}_w(z,x)= \pi_z\circ\widetilde{F}(z,x)+\frac{\pi^2}{4}w+O(wx,wz^2)=\pi_z\circ\widetilde{F}(z,x)+O(w)
$$
on $P^{att}_{\widetilde{F}}$ by Lemma \ref{LemmaFwlocal}, we obtain
$$
\mathcal{F}_m(Z,X)  = \mathcal{F}(Z,X) + \frac{[\mathcal{F}(Z,X)]^2\cdot O(w_m)}{1 + \mathcal{F}(Z,X)\cdot O(w_m)}.
$$
We are interested in $m=n^2+k$ with $k\in[\kappa_0,k_n]$.
Note that since $\mathcal{F}(Z,X)=O(Z)$ as $|Z|\rightarrow \infty$ and $w_{n^2+k}=O(1/n^2)$ by \eqref{wn} we get for $k\in[\kappa_0,k_n]$,
$$
\sup_{|Z|=R, |X|<\eta} |\mathcal{F}_{n^2+k}(Z,X)-\mathcal{F}(Z,X)| = o(1)
$$
and
$$
\sup_{|Z|=2k_n, |X|<\eta} |\mathcal{F}_{n^2+k}(Z,X)-\mathcal{F}(Z,X)| = O\left(\frac{k_n^2}{n^2}\right)=o(1).
$$
The maximum principle implies that on the set $\{(Z,X)\in\mathbb{C}^2: R<|Z|<2k_n, |X|<\eta\}$ the remainder term $\mathcal{F}_{n^2+k}(Z,X)-\mathcal{F}(Z,X)$ with $k\in[\kappa_0,k_n]$ is negligible. In particular
$$
\sup_{R<|Z|<2k_n, |X|<\eta} |\mathcal{F}_{n^2+k}(Z,X)- Z - 1| < 1/10
$$
if $n$ is large enough.
We will show by induction that for every $k\in[\kappa_0,k_n]$ we have
\begin{align}\nonumber
-\frac{1}{\pi_z\circ \widetilde{\boldsymbol{F}}_{n^2+k,n^2+\kappa_0}(z_0,x_0)} & \in  \overline{D}\left(-\frac{1}{z_0}+k-\kappa_0, \frac{k-\kappa_0}{10}\right)\\ \label{2kn}
& \subset  \{Z\in \mathbb{C}: \Re(Z)>R, |Z|<2 k_n \}.
\end{align}
In this case it follows that $\widetilde{\boldsymbol{F}}_{n^2+k, n^2+\kappa_0}(z_0,x_0)\in P^{att}_{\widetilde{F}}$.
The induction hypothesis clearly holds for $k=\kappa_0$.
If it holds for some $k\in[\kappa_0,k_n-1]$, then
\begin{align*}
& -\frac{1}{\pi_z\circ \widetilde{\boldsymbol{F}}_{n^2+k+1,n^2+\kappa_0}(z_0,x_0)}\\
=& \mathcal{F}_{n^2+k}\left(-\frac{1}{\pi_z\circ\widetilde{\boldsymbol{F}}_{n^2+k,n^2+\kappa_0}(z_0,x_0)},\pi_x\circ\widetilde{\boldsymbol{F}}_{n^2+k,n^2+\kappa_0}(z_0,x_0) \right)\\
 \in&    \overline{D}\left(-\frac{1}{\pi_z\circ \widetilde{\boldsymbol{F}}_{n^2+k,n^2+\kappa_0}(z_0,x_0)}+1, \frac{1}{10}\right)\\
\subset&   \overline{D}\left(-\frac{1}{z_0}+k-\kappa_0+1, \frac{k-\kappa_0}{10}+\frac{1}{10}\right).
\end{align*}
If $Z$ belongs to the latter disk, then
$$\Re(Z)>\Re\left(-\frac{1}{z_0}\right)+k-\kappa_0+1-\frac{k-\kappa_0+1}{10}>R+\frac{9}{10}(k-\kappa_0+1)>R.$$
On the other hand, using \eqref{kn10}, we have
$$
|Z|<\left|\frac{1}{z_0}\right|+k-\kappa_0+1+\frac{k-\kappa_0+1}{10}<\frac{1}{10}k_n+\frac{11}{10} k_n < 2k_n,
$$
which finishes the induction and with that the proof of (i).

\subsubsection*{Step 3}

In this step we prove that
$$
\Phi_{\widetilde{F}}(\widetilde{\boldsymbol{F}}_{n^2+k_n, n^2+\kappa_0}(z_0,x_0))=\Phi_{\widetilde{F}} (z_0,x_0) +k_n -\kappa_0 + o(1).
$$
By \eqref{AttrFatCoordProp}, we have that
$$
\frac{\partial\Phi_{\widetilde{F}}}{\partial z}(z,x) = \frac{1}{z^2} + O\left(\frac{1}{z}\right),
$$
as $P^{att}_{\widetilde{F}}\ni(z,x)\rightarrow(0,0).$
In addition, by \eqref{2kn}, we have for $k\in[\kappa_0, k_n]$,
$$
\left|\frac{1}{\pi_z\circ \widetilde{\boldsymbol{F}}_{n^2+k,n^2}(z_0,x_0)}\right| \leq 2 k_n.
$$
We infer that
$$
\sup \left|\frac{\partial\Phi_{\widetilde{F}}}{\partial z}(z,x) \right| = O (k_n^2),
$$
where the supremum is taken over paths that join $\widetilde{\boldsymbol{F}}_{n^2+k,n^2+\kappa_0}(z_0,x_0)$ and \linebreak $\widetilde{\boldsymbol{F}}_{n^2+k+1,n^2+\kappa_0}(z_0,x_0)$ for $k\in[\kappa_0, k_n-1]$.
It follows that
\begin{align*}
\Phi_{\widetilde{F}} ( \widetilde{\boldsymbol{F}}_{n^2+k+1,n^2+\kappa_0}(z_0,x_0)) &=  \Phi_{\widetilde{F}} \left( \widetilde{F} \circ \widetilde{\boldsymbol{F}}_{n^2+k,n^2+\kappa_0}(z_0,x_0)  + (O(w_{n^2+k}),0) \right)\\
&=   \Phi_{\widetilde{F}} ( \widetilde{F} \circ \widetilde{\boldsymbol{F}}_{n^2+k,n^2+\kappa_0}(z_0,x_0)) + \sup  \left|\frac{\partial\Phi_{\widetilde{F}}}{\partial z}(z,x) \right| \cdot O\left(\frac{1}{n^2}\right) \\
&=  \Phi_{\widetilde{F}}( \widetilde{\boldsymbol{F}}_{n^2+k,n^2+\kappa_0}(z_0,x_0)) + 1 + O\left(\frac{k_n^2}{n^2}\right),
\end{align*}
for $k\in[\kappa_0, k_n-1].$
Thus for, $k=k_n$:
\begin{align*}
\Phi_{\widetilde{F}} ( \widetilde{\boldsymbol{F}}_{n^2+k_n,n^2+\kappa_0}(z_0,x_0)) &= \Phi_{\widetilde{F}} (z_0,x_0) +k_n-\kappa_0 + O\left(\frac{k_n^3}{n^2}\right)\\
&= \Phi_{\widetilde{F}} (z_0,x_0) +k_n-\kappa_0 + o(1),
\end{align*}
since $k_n\sim n^\alpha$ and $\alpha<2/3.$
\subsubsection*{Step 4}
We combine the results of Step 3 and Step 1 to obtain
\begin{align*}
\Phi_{\widetilde{F}}(z^\iota_n, x^\iota_n)&=\Phi_{\widetilde{F}} \circ \widetilde{\boldsymbol{F}}_{n^2+k_n, n^2+\kappa_0}(z_0,x_0)\\
& = \Phi_{\widetilde{F}} (z_0,x_0) +k_n-\kappa_0 + o(1)\\
&= \Phi_F(z,x)+k_n +o(1),
\end{align*}
which was the main assertion of (ii).
In particular we have $\Phi_{\widetilde{F}}(z^\iota_n, x^\iota_n) = k_n + O(1)$ and thus also
$$z^\iota_n \sim -1/\Phi_{\widetilde{F}}(z^\iota_n, x^\iota_n) \sim -1/k_n.$$

\subsubsection*{Step 5}

In order to complete the proof of (ii), we still need to show that $(z^\iota_n, x^\iota_n)\in\Omega_{w_{n^2+k_n}}\cap \Delta^2_s$. Let $s>0$ be so small as needed for Lemma \ref{LemmaOmega}. Let $(z,x)\in C_F\subset \mathcal{B}_F$.
If necessary we can take the attracting petals smaller and $\kappa_0$ larger, to make sure that we have $F^{\kappa_0}(C_F)\subset P^{att}_F\subset U^{-1}(\Delta^2_s)$.
Note that $\boldsymbol{F}_{n^2+\kappa_0, n^2}$ converges to $F^{\kappa_0}$ locally uniformly as $n\rightarrow \infty$, hence for $n$ large $\boldsymbol{F}_{n^2+\kappa_0, n^2}(C_F)$ is close to $F^{\kappa_0}(C_F)$. This set is bounded away from $(0,0)$ and since the linear part of the coordinate change $U$ is the identity map, applying $U$ does not change this property. Hence for $(z_0,x_0)=U\circ\boldsymbol{F}_{n^2+\kappa_0, n^2}(z,x)$, there exists a constant $C^\prime$ such that $|x_0|<C^\prime|z_0|^2$.
In the definition of
$$
\Omega_{w_n}:=\{(z,x)\in\mathbb{C}^2: |x|<C\max\{|z|^2, |w_n|^2\}\}
$$
we can choose $C$ to be at least as large as $C^\prime$.
Then $(z_0,x_0)\in\Omega_{w_{n^2+\kappa_0}}\cap\Delta^2_s$. Moreover by Step 2, the iterates $\widetilde{F}^j(z_0,x_0)$ stay in $P^{att}_{\widetilde{F}}\subset\Delta^2_s$ for $j\in[0,k_n-\kappa_0]$ and applying Lemma \ref{LemmaOmega} multiple times we obtain $(z^\iota_n, x^\iota_n)\in\Omega_{w_{n^2+k_n}}\cap \Delta^2_s$, as desired.

\hfill $\square$

 \subsection{Proof of Proposition \ref{propos3}}

We use the notation $v^\iota_n:=w_{n^2+k_n}$ as in \cite{ABDPR}.
For $n$ large enough we have $(z^\iota_n, x^\iota_n)\in P^{att}_{\widetilde{F}}$. Moreover,
$$\Phi_{\widetilde{F}}(z^\iota_n, x^\iota_n) \sim -\frac{1}{z^\iota_n}\sim k_n\sim n^\alpha \sim |v^\iota_n|^{-\alpha/2},$$
hence $(z^\iota_n,x^\iota_n)\in \Phi_{\widetilde{F}}^{-1}(D^{att}_{v^\iota_n})$. Also, by Proposition \ref{propos1}  $(z^\iota_n,x^\iota_n)\in\Omega_{v^\iota_n}$. By Property 1' it follows that $(z^\iota_n,x^\iota_n)\in(V_{v^\iota_n}\times\mathbb{C})\cap\Omega_{v^\iota_n}\cap P^{att}_{\widetilde{F}}$ and
\begin{align}\label{propos3step1}
\frac{2}{\sqrt{v^\iota_n}}\Phi_{v^\iota_n}(z^\iota_n, x^\iota_n) =  \Phi_{\widetilde{F}}(z^\iota_n, x^\iota_n) + o(1).
\end{align}

We will prove by induction on $m\in[n^2+k_n, (n+1)^2-k_n]$ that
\begin{align}\label{Propos3Step21}
(z_m, x_m):=\widetilde{\boldsymbol{F}}_{m, n^2+k_n}(z^\iota_n, x^\iota_n)\in (V_{w_m}\times\mathbb{C})\cap\Omega_{w_m}
\end{align}
and
\begin{align}\label{Propos3Step22}
\Phi_{w_m}(z_m, x_m) = \Phi_{v^\iota_n}(z^\iota_n, x^\iota_n)+\sum_{j=n^2+k_n}^{m-1}\left(\frac{\sqrt{w_j}}{2}+o\left(\frac{1}{n^2}\right)\right).
\end{align}
For $m=n^2+k_n$, we have $w_m=v^\iota_n$ and hence $(z^\iota_n, x^\iota_n)\in (V_{v^\iota_n}\times\mathbb{C})\cap\Omega_{v^\iota_n}$ by our previous discussion.
Suppose the induction hypothesis holds for some $m$ in the interval $[n^2+k_n, (n+1)^2-k_n-1]$.
According to \eqref{wn} we have
$$
\sqrt{w_m} = \frac{1}{\sqrt{n^2+O(n))}} = \frac{1}{n}+O\left(\frac{1}{n^2}\right).
$$
In addition, by \eqref{propos3step1},
\begin{align}\label{Propos3Step23}
\Phi_{v^\iota_n}(z^\iota_n, x^\iota_n) = \frac{\sqrt{v^\iota_n}}{2}(\Phi_{\widetilde{F}}(z^\iota_n, x^\iota_n) + o(1)) = \frac{k_n}{2n}+o\left(\frac{k_n}{n}\right).
\end{align}

Combining with the induction assumption, we deduce that
$$
\Phi_{w_m}(z_m, x_m) = \frac{k_n}{2n}+o\left( \frac{k_n}{n}\right) + (m-n^2-k_n)\left(\frac{1}{2n}+O\left(\frac{1}{n^2}\right)\right) = \frac{m-n^2}{2n}+o\left( \frac{k_n}{n}\right).
$$
Hence we have
$$
 \frac{k_n}{2n}+o\left( \frac{k_n}{n}\right) \leq \Re(\Phi_{w_m}(z_m, x_m))\leq 1-  \frac{k_n}{2n} + o\left( \frac{k_n}{n}\right)
 $$
 and $\Im(\Phi_{w_m}(z_m, x_m))= o(1)$.
 Since $r_{w_m}= |w_m|^{(1-\alpha)/2}\sim k_n/n$, we have \linebreak $\Phi_{w_m}(z_m, x_m)\in\mathcal{R}_{w_m}$ for $n$ large enough.
 It follows that $(z_m, x_m)\in\Phi^{-1}_{w_m}(\mathcal{R}_{w_m})\cap\Omega_{w_m}$ and hence by Property 3' we get
$$
(z_{m+1},x_{m+1})=\widetilde{F}_{w_m}(z_m, x_m)\in (V_{w_{m+1}}\times\mathbb{C})\cap\Omega_{w_{m+1}}
$$
and
$$\Phi_{w_{m+1}}(z_{m+1},x_{m+1}) = \Phi_{w_m}(z_m, x_m) + \frac{\sqrt{w_m}}{2}+o(w_m).$$
We obtain
\begin{align*}
& \Phi_{w_{m+1}}(z_{m+1}, x_{m+1}) \\
=&  \Phi_{v^\iota_n}(z^\iota_n, x^\iota_n)+\sum_{j=n^2+k_n}^{m-1}\left(\frac{\sqrt{w_j}}{2}+o\left(\frac{1}{n^2}\right)\right) +  \frac{\sqrt{w_m}}{2}+o\left(\frac{1}{n^2}\right)\\
=& \Phi_{v^\iota_n}(z^\iota_n, x^\iota_n)+\sum_{j=n^2+k_n}^{m}\left(\frac{\sqrt{w_j}}{2}+o\left(\frac{1}{n^2}\right)\right),
\end{align*}
which finishes the induction step.
We now take $m=(n+1)^2-k_n$ and set
$$v^o_n:=w_{(n+1)^2-k_n}, \qquad \text{and}\qquad (z^o_n, x^o_n):=\widetilde{\boldsymbol{F}}_{(n+1)^2-k_n, n^2+k_n}(z^\iota_n, x^\iota_n).$$
According to \eqref{Propos3Step21}, we have $(z^o_n, x^o_n)\in (V_{v^o_n}\times\mathbb{C})\cap \Omega_{v^o_n}$ and by \eqref{Propos3Step22},
\begin{align}\label{Propos3Step31}
\Phi_{v^o_n}(z^o_n, x^o_n) = \Phi_{v^\iota_n}(z^\iota_n, x^\iota_n) + \sum_{j=n^2+k_n}^{n^2+2n-k_n}\left(\frac{\sqrt{w_j}}{2}+o\left(\frac{1}{n^2}\right)\right).
\end{align}
Using \eqref{Propos3Step23} and Proposition \ref{propos2} we have
\begin{align*}
\Phi_{v^o_n}(z^o_n, x^o_n) = \frac{k_n}{2n}+o\left(\frac{k_n}{n}\right) + 1-\frac{k_n}{n}+o\left(\frac{1}{n}\right) = 1 - \frac{k_n}{2n}+o\left(\frac{k_n}{n}\right).
\end{align*}
Recall that $\Phi_w(z,x)=\varphi_w(z)$ does not depend on $x$, so we also have
\begin{align}\label{Propos3Step32}
\varphi_{v^o_n}(z^o_n)= 1 - \frac{k_n}{2n}+o\left(\frac{k_n}{n}\right),
\end{align}
We set
$$
X_n:= \frac{2}{\sqrt{v^o_n}}(\Phi_{v^o_n}(z^o_n, x^o_n) -1) = \frac{2}{\sqrt{v^o_n}}(\varphi_{v^o_n}(z^o_n) -1)
$$
so that
$$\varphi_{v^o_n}(z^o_n) = 1+ \frac{\sqrt{v^o_n}}{2} X_n.$$
Since $2/\sqrt{v^o_n}=2n+O(1)$, we infer from \eqref{Propos3Step32} that $X_n = -k_n(1+o(1))$. On the other hand $k_n\sim |v^o_n|^{-\alpha/2}$, so for $n$ large enough, $X_n\in D^{rep}_{v^o_n}$.
We saw in the proof of Property 2' that on the set $1+\frac{\sqrt{v^o_n}}{2} D^{rep}_{v^o_n}$ the map $\Phi_{v^o_n}^{-1}$ can be defined to map into $P^{rep}_{\widetilde{F}},$ so that we have
$$ \Phi_{v^o_n}^{-1}\left(1+\frac{\sqrt{v^o_n}}{2} X_n\right) = (z^o_n, u^o_n)$$
where $u^o_n\in\mathbb{C}$ is such that $(z^o_n, u^o_n)\in P^{rep}_{\widetilde{F}}$. Moreover Property 2' gives
$$\Psi_{\widetilde{F}}^{-1} \circ \Phi_{v^o_n}^{-1}\left(1+\frac{\sqrt{v^o_n}}{2} X_n\right) = X_n + o(1).$$
It follows that
$$\Psi_{\widetilde{F}}^{-1}(z^o_n, u^o_n) =X_n + o(1).$$
Using the definition of $X_n$, and \eqref{Propos3Step31}, we deduce
$$
\Psi_{\widetilde{F}}^{-1}(z^o_n, u^o_n) = \frac{2}{\sqrt{v^o_n}}\left(\Phi_{v^\iota_n}(z^\iota_n, x^\iota_n) + \sum_{j=n^2+k_n}^{n^2+2n-k_n}\left(\frac{\sqrt{w_j}}{2}\right)+o\left(\frac{1}{n}\right) -1 \right).
$$
Since $(z^\iota_n,x^\iota_n)\in\Phi_{\widetilde{F}}^{-1}(D^{att}_{v^\iota_n})\cap \Omega_{v^\iota_n},$ as shown in the beginning of this proof, we can use Property 1' to obtain
$$
\Psi_{\widetilde{F}}^{-1}(z^o_n, u^o_n) = \frac{2}{\sqrt{v^o_n}}\left( \frac{\sqrt{v^\iota_n}}{2}(\Phi_{\widetilde{F}}(z^\iota_n, x^\iota_n)+o(1)) + \sum_{j=n^2+k_n}^{n^2+2n-k_n}\left(\frac{\sqrt{w_j}}{2}\right)+o\left(\frac{1}{n}\right) -1 \right).
$$
Finally, we use $\sqrt{v^\iota_n}/2=1/(2n) + O(1/n^2)$ and $2/\sqrt{v^o_n}=2n+O(1)$ plus the fact that by Proposition \ref{propos2} the above summation is equal to $1-\frac{k_n}{n}+o(1/n)$ to get the desired result, namely
$$
\Psi_{\widetilde{F}}^{-1}(z^o_n, u^o_n) = \Phi_{\widetilde{F}} (z^\iota_n, x^\iota_n) - 2k_n +o(1).
$$
In particular we get $z^o_n \sim -1/\Psi^{-1}_{\widetilde{F}}(z^o_n, u^o_n) \sim -1/(k_n-2k_n) = 1/k_n.$
\hfill $\square$

\subsection{Proof of Proposition \ref{propos4}}

Let $z^o_n, x^o_n$ and $u^o_n$ be defined as before.
By Propositions \ref{propos1} and \ref{propos3}, we have
$$
\Psi^{-1}_{\widetilde{F}}(z^o_n, u^o_n)=-k_n+O(1).
$$
Hence we can choose $\kappa_1$ large enough so that
$$
\kappa_1>\Re(\Psi^{-1}_{\widetilde{F}}(z^o_n, u^o_n))+k_n+R+1.
$$
For $k\in[0,k_n-\kappa_1]$ define
$$(z^o_{n,k}, x^o_{n,k}):= \widetilde{\boldsymbol{F}}_{(n+1)^2-k_n+k, (n+1)^2-k_n}(z^o_n, x^o_n).$$
We also set $w_{n,k}:=w_{(n+1)^2-k_n+k}$.
In this proof we will assume that $P^{rep}_{\widetilde{F}}$ has been defined small enough so that $\widetilde{F}(P^{rep}_{\widetilde{F}})\subset \Psi_{\widetilde{F}}(\mathbb{C})$ is still a graph over $z$ with derivative bounded by $1$ in absolute value (the latter due to the fact that $P^{rep}_{\widetilde{F}}$ is tangent to the $z$-plane at the origin).
We want to show that
$$(z^o_{n,k_n-\kappa_1}, x^o_{n,k_n-\kappa_1}) = \widetilde{F}^{k_n-\kappa_1}(z^o_n, u^o_n) +o(1).$$

To do this, we will show by induction on $k$ that there exist constants $C, D>0$, not depending on $n$ or $k$, such that for $k\in[0,k_n-\kappa_1]$ the following assertions hold:
\begin{enumerate}[I(k)]
\item $u^o_{n,k}$ is well-defined by the condition $(z^o_{n, k}, u^o_{n,k})\in \widetilde{F}(P^{rep}_{\widetilde{F}})$,
\item $|u^o_{n,k}-x^o_{n,k}| \leq \frac{\left(\frac{1}{2}\right)^k C}{k_n^2} + \sum_{j=1}^{k} \frac{\left(\frac{1}{2}\right)^{j-1} D}{n^2},$
\item $|\Psi^{-1}_{\widetilde{F}}(z^o_{n, k}, u^o_{n,k}) - \Psi^{-1}_{\widetilde{F}}(z^o_{n}, u^o_{n}) - k | \leq 3D \frac{k k_n^2}{n^2} + \sum_{j=1}^{k} \frac{\left(\frac{1}{2}\right)^{j-1}C}{(k_n-j+1)^3} ,$ and
\item $(z^o_{n, k}, u^o_{n,k})\in P^{rep}_{\widetilde{F}}$.
\end{enumerate}

Let us show that Proposition \ref{propos4} follows from these hypotheses.

Since $D \sum_{j=0}^\infty \left(\frac{1}{2}\right)^j = 2D$ it follows from assertion II(k) that
\begin{align}\label{uxdistance}
|u^o_{n,k} - x^o_{n,k}|\leq \frac{\left(\frac{1}{2}\right)^k C}{k_n^2} +  \frac{2D}{n^2}  =o(1).
\end{align}
We also note that the right-hand side in assertion III(k) converges to $0$ for $k\in[0,k_n-\kappa_1]$ as $n\rightarrow \infty$. Indeed,
\begin{align*}
3D\frac{k k_n^2}{n^2} + \sum_{j=1}^{k} \frac{\left(\frac{1}{2}\right)^{j-1}C}{(k_n-j+1)^3} &\leq 3D \frac{ k_n^3}{n^2} +\sum_{j=1}^{k_n/2} \frac{\left(\frac{1}{2}\right)^{j-1}C}{(k_n-j+1)^3} + \sum_{j=k_n/2+1}^{k_n-\kappa_1} \frac{\left(\frac{1}{2}\right)^{j-1}C}{(k_n-j+1)^3} \\
&\leq 3D \frac{ k_n^3}{n^2} +\sum_{j=1}^{k_n/2} \frac{8C}{k_n^3}+\sum_{j=k_n/2+1}^{k_n}C \left(\frac{1}{2}\right)^{k_n/2}\\
&\leq  3D \frac{ k_n^3}{n^2} + \frac{4C}{k_n^2} + \frac{C \left(\frac{1}{2}\right)^{k_n/2}k_n}{2}\\
&=o(1),
\end{align*}
as $n\rightarrow\infty$, where we note that $k_n^3/n^2=o(1)$, because $\alpha<2/3$.
Hence, we obtain for the special case $k=k_n-\kappa_1$ that
$$
|\Psi^{-1}_{\widetilde{F}}(z^o_{n, k_n-\kappa_1}, u^o_{n,k_n-\kappa_1}) - \Psi^{-1}_{\widetilde{F}}(z^o_{n}, u^o_{n}) - k_n + \kappa_1 |= o(1).
$$
In other words,
$$
\Psi^{-1}_{\widetilde{F}}(z^o_{n, k_n-\kappa_1}, u^o_{n,k_n-\kappa_1}) = \Psi^{-1}_{\widetilde{F}}(z^o_{n}, u^o_{n}) + k_n - \kappa_1 +o(1) = \Psi^{-1}_{\widetilde{F}}(\widetilde{F}^{k_n-\kappa_1}(z^o_n, u^o_n)) + o(1).
$$
Applying $\Psi_{\widetilde{F}}$ on both sides yields
$$
(z^o_{n, k_n-\kappa_1}, u^o_{n,k_n-\kappa_1}) = \widetilde{F}^{k_n-\kappa_1}(z^o_n, u^o_n) + o(1).
$$
Since, in addition by \eqref{uxdistance} $u^o_{n,k_n-\kappa_1}=x^o_{n,k_n-\kappa_1}+o(1)$, this completes the proof of the main statement of Proposition \ref{propos4}.
Moreover, by IV($k_n-\kappa_1$) we have that $(z^o_{n, k_n-\kappa_1}, u^o_{n,k_n-\kappa_1})$ lies in  $P^{rep}_{\widetilde{F}}$ and in particular, $\widetilde{F}^{k_n-\kappa_1}(z^o_n, u^o_n)) $ lies in the set where the reverse coordinate change $U^{-1}$ is defined.

To begin with the induction, recall that for $k=0$ we have $z^o_{n,k}=z^o_n$ and $u^o_{n,k}=u^o_n$ is well-defined via $(z^o_n, u^o_n)\in P^{rep}_{\widetilde{F}}$ by Proposition \ref{propos3}.
Moreover, since \linebreak $(z^o_n, x^o_n)\in\Omega_{w_{n^2+2n-k_n}}$ and $z^o_n\sim 1/k_n$, we have $|x^o_n|<C|z^o_n|^2/2\sim C/(2k_n^2)$ for some constant $C>0$. Also, since $P^{rep}_{\widetilde{F}}$ is a holomorphic graph over the one-dimensional repelling petal that is tangent to the $z$-plane at the origin, by increasing $C$ if necessary, we have $|u^o_n|<C|z^o_n|^2/2\sim C/(2k_n^2)$ as well and $|u^o_n - x^o_n| < C/k_n^2$ follows, so the induction hypotheses hold for $k=0$.

Now suppose, that I(k), II(k), III(k) and IV(k) hold for some $k\in[0,k_n-\kappa_1-1]$.

\subsubsection*{Proof of I(k+1)}

We want to show that $u^o_{n,k+1}$ is well-defined. For that, note that
\begin{equation}\label{zkdist}
\begin{aligned}
|z^o_{n,k+1}  - \pi_z\circ\widetilde{F}(z^o_{n,k}, u^o_{n,k})| &= |\pi_z\circ \widetilde{F}_{w_{n,k}}(z^o_{n,k}, x^o_{n,k})- \pi_z\circ \widetilde{F}(z^o_{n,k}, u^o_{n,k})|\\
&\leq  |\pi_z\circ \widetilde{F}(z^o_{n,k}, x^o_{n,k})- \pi_z\circ \widetilde{F}(z^o_{n,k}, u^o_{n,k})+ O(w_{n,k})|\\
&\leq \sup \left| \frac{\partial (\pi_z\circ \widetilde{F})}{\partial x}\right| |u^o_{n,k}-x^o_{n,k}| +\frac{D}{n^2}\\
&\leq  \frac{1}{(k_n-k)^3} | u^o_{n,k} - x^o_{n,k}|+\frac{D}{n^2}\\
&= o(1),
\end{aligned}
\end{equation}
as $n\rightarrow\infty$. Here, and also in equation \eqref{xdistance} below, the supremum is taken over the vertical interval between the points $(z^o_{n,k}, u^o_{n,k})$ and $(z^o_{n,k}, x^o_{n,k})$. We use that $w_{n,k}$ is of order $O(1/n^2)$ by \eqref{wn}. The second to last step in \eqref{zkdist} follows from $\partial( \pi_z\circ \widetilde{F})/\partial x = O(z^4)$ and $z^o_{n,k} \sim -1/\Psi^{-1}_{\widetilde{F}}(z^o_{n,k}, u^o_{n,k}) \sim -1/(k-k_n)$ by the induction asumption III(k). The last step follows from \eqref{uxdistance} .

Since $(z^o_{n, k}, u^o_{n,k})\in P^{rep}_{\widetilde{F}}$ by IV(k), we have $\widetilde{F}(z^o_{n, k}, u^o_{n,k})\in \widetilde{F}(P^{rep}_{\widetilde{F}})$ and since $z^o_{n,k+1}$ is close to $\pi_z\circ\widetilde{F}(z^o_{n,k}, u^o_{n,k})$ by \eqref{zkdist}, we can define $u^o_{n,k+1}$ by requiring that $(z^o_{n,k+1}, u^o_{n,k+1})$ lies in $\widetilde{F}(P^{rep}_{\widetilde{F}})\subset \Psi_{\widetilde{F}}(\mathbb{C})$.
Therefore $u^o_{n,k+1}$ is well-defined.

\subsubsection*{Proof of II(k+1)}

Observe that
$$
|u^o_{n, k+1} - x^o_{n,k+1}| \leq |u^o_{n,k+1} - \pi_x\circ \widetilde{F}(z^o_{n,k}, u^o_{n,k})| + | \pi_x\circ \widetilde{F}(z^o_{n,k}, u^o_{n,k}) -  \pi_x\circ \widetilde{F}_{w_{n,k}}(z^o_{n,k}, x^o_{n,k})|.
$$
We estimate the first absolute difference by noting that $u^o_{n,k+1}$ and $\pi_x\circ \widetilde{F}(z^o_{n,k}, u^o_{n,k})$ are the $x$-coordinates of the graph corresponding to $\widetilde{F}(P^{rep}_{\widetilde{F}})$ evaluated in $z^o_{n,k+1}$ and in $\pi_z\circ \widetilde{F}(z^o_{n,k}, u^o_{n,k})$. Since this graph has slope bounded by $1$, we get
\begin{equation}\label{uxnext}
\begin{aligned}
 |u^o_{n,k+1} - \pi_x\circ \widetilde{F}(z^o_{n,k}, u^o_{n,k})| &< |z^o_{n,k+1} - \pi_z\circ \widetilde{F}(z^o_{n,k}, u^o_{n,k})|\\
 &< \frac{|u^o_{n,k}-x^o_{n,k}|}{4}+  \frac{D}{n^2} ,
\end{aligned}
\end{equation}
as estimated in \eqref{zkdist} and using that $(k_n-k)^3>4$.
On the other hand, since the $x$-coordinates of $F_w$ and $F$ are the same,
\begin{equation}\label{xdistance}
\begin{aligned}
 | \pi_x\circ \widetilde{F}(z^o_{n,k}, u^o_{n,k}) -  \pi_x\circ \widetilde{F}_{w_{n,k}}(z^o_{n,k}, x^o_{n,k})| &= | \pi_x\circ \widetilde{F}(z^o_{n,k}, u^o_{n,k}) -  \pi_x\circ \widetilde{F}(z^o_{n,k}, x^o_{n,k})|\\
  &<\sup|\frac{\partial \pi_x\circ \widetilde{F}}{\partial x}| |u^o_{n,k}-x^o_{n,k}|.
\end{aligned}
\end{equation}
Since $|\frac{\partial \pi_x\circ \widetilde{F}}{\partial x}| = \delta + O(x^2,z) <1/4,$ if $\delta$ small enough, we can combine \eqref{uxnext} and \eqref{xdistance} to get
\begin{align*}
|u^o_{n,k+1} - x^o_{n,k+1}| &\leq \frac{1}{2}|u^o_{n,k}-x^o_{n,k}| +  \frac{D}{n^2} \\
&\leq \frac{1}{2}\left(\frac{\left(\frac{1}{2}\right)^kC}{k_n^2}+ \sum_{j=1}^{k}\frac{\left(\frac{1}{2}\right)^{j-1} D}{n^2}\right) +  \frac{D}{n^2} \\
&=\frac{ \left(\frac{1}{2}\right)^{k+1}C}{k_n^2} +  \sum_{j=1}^{k+1} \frac{\left(\frac{1}{2}\right)^{j-1} D}{n^2},
\end{align*}
which proves assertion II(k+1).

\subsubsection*{Proof of III(k+1)}

We recall from \eqref{zkdist} that
\begin{align}\label{zkdistance}
|z^o_{n,k+1} - \pi_z \circ \widetilde{F}(z^o_{n,k},u^o_{n,k})| \leq  \frac{1}{(k_n-k)^3} | u^o_{n,k} - x^o_{n,k}| +\frac{D}{n^2}.
\end{align}
Combining this with \eqref{uxdistance}, we obtain
\begin{align*}
|z^o_{n,k+1} - \pi_z \circ \widetilde{F}(z^o_{n,k},u^o_{n,k})| &\leq   \frac{1}{(k_n-k)^3}  \left( \frac{2D}{n^2}+\frac{\left(\frac{1}{2}\right)^k C}{k_n^2}\right)+  \frac{D}{n^2}\\
&\leq \frac{3D}{n^2} + \frac{\left(\frac{1}{2}\right)^k C}{k_n^2 (k_n-k)^3}.
\end{align*}
Recall that $\Psi_{\widetilde{F}}^{-1}(z,x)\sim -1/z$ is defined on a graph, hence can be considered as a function of $z$ whose derivative is of order $O(1/z^2)$.
Using $|z^o_{n,k}|\sim|1/(k-k_n)|\geq1/k_n$ we infer that
\begin{align*}
|\Psi^{-1}_{\widetilde{F}}(z^o_{n, k+1}, u^o_{n,k+1}) -  \Psi^{-1}_{\widetilde{F}} \circ \widetilde{F}(z^o_{n,k}, u^o_{n,k})| \leq k_n^2 \left( \frac{3D}{n^2} + \frac{\left(\frac{1}{2}\right)^k C}{k_n^2 (k_n-k)^3}\right).
\end{align*}
Combining this error with induction hypothesis III(k) gives
\begin{align*}
&|\Psi^{-1}_{\widetilde{F}}(z^o_{n, k+1}, u^o_{n,k+1}) - \Psi^{-1}_{\widetilde{F}}(z^o_{n}, u^o_{n}) - (k+1) | \\
\leq &|\Psi^{-1}_{\widetilde{F}}(z^o_{n, k+1}, u^o_{n,k+1}) - \Psi^{-1}_{\widetilde{F}}(\widetilde{F} (z^o_{n, k}, u^o_{n,k})) | +  |\Psi^{-1}_{\widetilde{F}}(z^o_{n, k}, u^o_{n,k}) - \Psi^{-1}_{\widetilde{F}}(z^o_{n}, u^o_{n}) - k | \\
\leq &k_n^2 \left( \frac{3D}{n^2} + \frac{\left(\frac{1}{2}\right)^k}{k_n^2 (k_n-k)^3}\right)  + 3D\frac{k k_n^2}{n^2} + \sum_{j=1}^{k} \frac{\left(\frac{1}{2}\right)^{j-1} C}{(k_n-j+1)^3} \\
\leq& 3D\frac{(k+1) k_n^2}{n^2} + \sum_{j=1}^{k+1} \frac{\left(\frac{1}{2}\right)^{j-1}C }{(k_n-j+1)^3},
\end{align*}
which finishes the proof of III(k+1).

\subsubsection*{Proof of IV(k+1)}

By III(k+1) we have
$$
|\Psi^{-1}_{\widetilde{F}}(z^o_{n, k+1}, u^o_{n,k+1}) - \Psi^{-1}_{\widetilde{F}}(z^o_{n}, u^o_{n}) - (k+1) |= o(1),
$$
as $n\rightarrow\infty$.
From this we see that
\begin{align*}
\Re(\Psi^{-1}_{\widetilde{F}}(z^o_{n, k+1}, u^o_{n,k+1}) ) &= \Re(\Psi^{-1}_{\widetilde{F}}(z^o_{n}, u^o_{n}))+ k +1+ o(1) \\
&< \kappa_1-k_n - R -1 +k + 1 +o(1)\leq -R - 1 + o(1) <-R.
\end{align*}
In particular we obtain that
$(z^o_{n,k+1}, u^o_{n,k+1})\in P^{rep}_{\widetilde{F}}$, which completes the induction and thereby the proof of Proposition \ref{propos4}.
\hfill $\square$

  \bibliographystyle{abbrv}
  \bibliography{mybib}

\end{document}